\newif\ifshowsaveblocks
\newif\ifshowcomplete
\renewcommand{\alg}{\mathrm{alg}}
\newcommand{\model}[2]{#2\text{-}#1}
\newcommand{\Res}{\mathrm{Res}}
\renewcommand{\Ind}{\mathrm{Ind}}
\newcommand{\nilesemail}{niles@math.osu.edu}
\newcommand{\nilesurl}{http://www.nilesjohnson.net}
\newcommand{\justinemail}{justin.noel@mathematik.uni-regensburg.de}
\newcommand{\justinurl}{http://www.nullplug.org/}
\newcounter{enumisaved}
\newlength{\thmsaved}
\newlength{\thmnow}
\newcommand*{\strippt}[1]{\strip@pt#1}
\newcommand{\setthmsaved}[1]{\setlength{\thmsaved}{#1pt}}
\newcommand{\setthmnow}[1]{\setlength{\thmnow}{#1pt}}
\newcommand{\complete}[1]{\ifshowcomplete
#1
\fi}
\newcommand{\completeblock}[2][0]{%
  \setcounter{enumisaved}{\theenumi}%
  \setthmsaved{\thethm}%
  \ifshowcomplete
  \ \todo[inline, bordercolor=blue!40!black,
  backgroundcolor=green!10!white]{Begin completeblock #1}
  #2
  \setthmnow{\thethm}
  \ \todo[inline, bordercolor=red!40!black,
  backgroundcolor=green!10!white]{End completeblock #1}
  \ifthenelse{\lengthtest{\thmnow > \thmsaved}}{
    % raisies error about decimal point
    \setcounter{thm}{\strippt{\thmsaved}}  % cannot set counter to non-integral value
  }{%else
    %do nothing
  }
  \setcounter{enumi}{\theenumisaved}
  \else%
  \fi
}
\newcommand{\longsaveblock}[2][0]{
  \setcounter{enumisaved}{\theenumi}
  \setthmsaved{\thethm}
  \ifshowsaveblocks
  \ \todo[inline, bordercolor=green!40!black,
  backgroundcolor=yellow!30!white]{\tiny Begin long saveblock #1}
  #2
  \setthmnow{\thethm}
  \ \todo[inline, bordercolor=red!40!black,
  backgroundcolor=yellow!30!white]{\tiny End long saveblock #1}
  \ifthenelse{\lengthtest{\thmnow > \thmsaved}}{
    % raisies error about decimal point
    \setcounter{thm}{\strippt{\thmsaved}}  % cannot set counter to non-integral value
  }{%else
    %do nothing
  }
  \setcounter{enumi}{\theenumisaved}
  \else
  \fi
}
\newcommand{\saveblock}[2][0]{\ifshowsaveblocks
\todo[inline, bordercolor=black,
  backgroundcolor=yellow!30!white]{[\texttt{saveblock #1}] #2}
\fi}
\newcommand{\saveblockZERO}{\saveblock[intro]{

  \textbf{Commentary}

  This version is compiled with 
  additional notes by setting the $\backslash$showsaveblocks flag to
  true.  These notes are shown
  completely enclosed in yellow bubbles, unless doing so created
  compile errors, in which case there are begin/end 
  markers.  These are called ``long saveblocks'' because some of 
  them are longer, although not all of them are.
}}
\newcommand{\saveblockA}{\saveblock[A]{
  Monadic implies the assumptions is easy. We need to show the canonical
  functor to the category of algebras is an equivalence. To see that
  the functor is full one applies \cite[Prop.~4.2.1]{Bor94a}. To see that
  the functor is essentially surjective one applies the canonical
  presentation and \cref{it:barr-beck-b}. To show that the functor is
  faithful one shows that the composite with the forgetful functor
  which is $U$ is faithful. Assuming that $UX$ is given by applying
  $U$ to the canonical presentation one sees that maps out of $UX$ are
  uniquely determined from the colimit presentation (two different
  maps in the source which become equivalent after applying $U$ induce
  the same maps between canonical presentations so one needs to show
  that structure map is an epimorphism this follows from the structure
  map being a coequalizer). Of course up to
  isomorphism every algebra is given by the canonical presentation and
  then we use \cref{it:barr-beck-a} to see there is a unique map.  
}}
\newcommand{\saveblockB}{\saveblock[B]{
To prove the equivalence one uses 3.8.12 Bor94a which shows every
$T$-algebra is a filtered colimit of finitely presentable ones, which are
reflexive coequalizers of free algebras. So it suffices to prove the
equivalence on the subcategories of free algebras.

}}
\newcommand{\saveblockC}{\saveblock[C]{
Taking a skeleton of $\FinSet^S$, each object $(x_s)_{s \in S}$ of
$(\FinSet^S)^{\op}$ is isomorphic to $(\underline{n_s})_{s \in S} \iso
(\underline{1}^{\times n_s})_{s \in S}$ where $|x_s| = n_s$ and
$\underline{n_s}$ is a fixed set with $n_s$ elements.

Thus, if $i\cn(\FinSet^s)^{\op} \to \cT$ is a product preserving
functor and we have $i((\underline{1})_{s \in S}) = \prod_s i_s$ for
objects $i_s \in \cT$, then for each $x \in (\FinSet^S)^{\op}$ we have
\[
i(x)\cong \prod_{s\in S} i_s^{\times n_s}.
\]

}}
\newcommand{\saveblockD}{\saveblock[D]{
      
      Fixing a $\bZ$-graded abelian group $A$, the category of
      $\bZ$-graded abelian groups over $A$ is the category of
      $\Set$-valued models for a theory $\cT^{\bZ}_{\Ab}\downarrow A$
      graded by $S=\coprod_{j\in \bZ}S_j$ where
      \[S_j=\AbGroup^\bZ(\bZ[j],A).\] 
      The objects of this theory can be identified with the graded abelian
      groups over $A$ which are finite direct sums of graded abelian
      groups of the form $\bZ^{n}[j]$.
    
}}
\newcommand{\saveblockF}{\saveblock[F]{
  How to construct overcategories as graded theories.

  Example:

  Given any singly-graded theory $\cT$ and a $\cT$-model $X \in
  \model{\Set}{\cT}$, the category of $\cT$-models over $X$ is an
  $S$-graded theory where $S$ is the set of $\cT$-model maps from the
  free $\cT$-model $\cT\{\underline{1}\}$ to $X$. This theory is denoted $\cT
  \downarrow X$.  More generally, if $\cT$ is an $S_0$-graded theory
  then the category of $\cT$-models over $X$,
  $(\model{\Set}{\cT})_{\downarrow X}$, is a theory graded by $S =
  \coprod_{j \in S_0} S_j$ where each $S_j$ is the set of $\cT$-model
  maps from $\cT\{[j]\} = \cT(i([j]),-)$ to $X$.
  
}}
\newcommand{\saveblockG}{\saveblock[G]{
All of this passes to the undercategory situation as well (suppose we
are looking at the category of models under Z). The model
category, (co)limit, and simplicial parts all dualize. The simplicial
hom objects are again defined by the obvious pullback (no error).
Colimits in this category are calculated as colimits in the underlying
category after we adjoin the object we are under to the diagram.
Limits are calculated in the underlying category. The tensor product
can be calculated as a pushout  $Z \leftarrow Z\otimes K \to X\otimes K$ where
the first map is induced by $K \to *$. We can use the cotensor form of
axiom SM7 and that cotensors and pullbacks are calculated in the
underlying category to see that the undercategory is a simplicial
model category when the underlying category is simplicial.

The cofibrantly generated part does not dualize. But if $I$ and $J$ are
the generating cofibrations and acyclic cofibrations respectively of
$\cC$ then the new generating cofibrations and acyclic cofibrations are
$Z\coprod I$ and $Z\coprod J$.

It is now straightforward to check Hovey's axioms. 

Now for the based case. We can use the above to see the category of
objects over and under $Z$ is a cofibrantly generated simplicial model
category. Since this category is pointed this enrichment restricts to
an enrichment in based simplicial sets. We can see that the
composition product, which is defined using the cartesian product
factors over the smash product. 

Now a $\pTop(K,\cC(X,Y)) \subseteq \Top(K, \cC(X,Y))\cong \cC(X,Y^K)
\cong \cC(K\otimes X, Y)$.
Which subset is this? Well every map of topological spaces is over the
terminal object, but not under a point. So we are looking for the maps
under a point, which is given by a pullback  of
\[
* \to \Top(*, \cC(X,Y)) \leftarrow \Top(K, \cC(X,Y)) 
\]
Where the first arrow picks out the zero morphism and the second is
induced by restriction along $* \to K$.  The cotensor looks like it is
calculated as the pullback $* \to Y \leftarrow Y^K$ in the underlying category.
Dually the tensor should be calculated as the pushout $* \leftarrow X \to
K\otimes X$. 

Now we need to verify that SM7 holds. We use the version listed in
terms of mapping spaces. Pullbacks in the category of pointed
simplicial sets are calculated in the unpointed category.  Since
cofibrations, fibrations, and weak
equivalences in the pointed version of $\cC$ as well as pointed
simplicial sets are the same as in the unpointed categories. It
follows that mapping space form of SM7 holds in the pointed category
because it holds in the unbased category.

}}
\newcommand{\saveblockH}{\saveblock[H]{See also: Kriz-May [KM95Operads] and Fresse [Fre09Modules]}}
\newcommand{\saveblockI}{\saveblock[I]{we work this out in more detail
when $F$ is the tensor with the simplicial category}}
\newcommand{\saveblockJ}{\saveblock[J]{

  Proof:

  The retraction follows by taking the geometric realization of the
  following simplicial homotopy $h = \{h_i\}$ between $\id_{T^{\bul+1}
  UX}$ and
  $e \circ \epz$:
  \[
  h_i = e^{i+1}\mu^i : T^{n} UX \to T(T^n UX) = T^{n+1} UX
  \]
  where $\mu^i$ denotes the composite
  \[
  T^i W \fto{\mu_{T^{i-1} W}} T^{i-1} W \fto{\mu_{T^{i-2}W}} \cdots
  \fto{\mu_{T^2 W}} T^2 W
  \fto{\mu_{TW}} TW \fto{\mu} W
  \]
  with $W = T^{n-i} UX$, and $e^{i+1}$ denotes the composite
  \[
  W \fto{e_W} TW \fto{e_{TW}} T^2 W \fto{e_{T^2 W}} \cdots
  \fto{e_{T^i W}} T^{i+1} W.
  \]

}}
\newcommand{\saveblockK}{\saveblock[K]{Niles's notes on partial latching objects}}
\newcommand{\saveblockL}{\saveblock[L]{General cellular latching lemma}}
\newcommand{\saveblockM}{\saveblock[M]{Version of this argument in EKMM}}
\newcommand{\saveblockN}{\saveblock[N]{%
  Okay so this is sweeping some details under the rug. The spectral
  sequence is always supposed to consist of based sets so what we are
  saying here does not make strict sense. The remark after the corollaries below is supposed
  to let the reader know that the spectral sequence consists of based
  sets of  maps. (but not a set of based maps!)
  Strictly speaking there is a single map taking a homotopy class of
  map in $\pi_0 (\cC_T(B_0 X,Y),\epsilon)$ to the 
  unbased homotopy classes of maps $S^0\rightarrow
  \cC_T(F_TTUX,Y)\cong \cC(TUX,UY)$ 
  via $d^0$ and $d^1$. Moreover as described by Bousfield this unbased
  map satisfies the property that if you restrict along the unit 
  $UX\rightarrow TUX$, the two maps are connected by a path which is
  necessarily $\epsilon$ at one end. This is the role of normalization
  He is just saying the map satisfies an additional property.
  In his description $[f]$ survives to $E_2$ precisely when the unbased map
  is trivial, that is when there is a path connecting $d^0[f]$ and
  $d^1[f]$ note that $d_1[f]$ is always the base point of the latter.
  So all of these abuses are justified as we can see the underlying
  sets are in fact equal.}}
\newcommand{\saveblockO}{\saveblock[O]{
  The coequalizer formula for $F_{T_3}$ in \cite[Lem.~4.5.5]{Bor94a}
  is something like the following:
  \[ 
  T_3 X \iso \coeq( T_2 X\leftleftarrows T_2 T_1 X)
  \]  
}}
\newcommand{\saveblockP}{\saveblock[P]{
One can see that the this map agrees with the induced map on totalized mapping spaces by showing that $U_3$ commutes with geometric realization and then applying the other maps above. To see that the canonical map $|U_3 B_\bul|_{\cC_{T_1}} \to U_3 |B_\bul|_{\cC_{T_2}}$ is an isomorphism for any simplicial $T_2$ algebra $B_\bul$, it suffices to show that it is an isomorphism after applying $U_1$ since $U_1$ reflects isomorphisms. This map fits into the following composite
  \[
  |U_1U_3 B_\bul|_{\cC} \to U_1|U_3 B_\bul|_{\cC_{T_1}} \to
  U_1U_3 |B_\bul|_{\cC_{T_2}} .
  \]  
  The first map and the composite are isomorphisms because $U_1$
  and $U_2 = U_1U_3$ commute with geometric realization.  Hence the
  second map is an isomorphism.
  
}}
\newcommand{\saveblockQ}{\saveblock[Q]{
We really need $Y$ to be fibrant for the statement below. Otherwise
there is no guarantee that the natural map $Y^{S^t}\rightarrow
(Y^{\prime})^{S^t}$ is a weak equivalence for a fibrant replacement
$Y^{\prime}$.

}}
\newcommand{\saveblockR}{\saveblock[R]{
It will suffice to have the following:
Since $T_{\alg}$ is monadic over set we obtain a combinatorial
model structure on $sT_{\alg}$ which is monadic over $\sSet$. This
guarantees enough projectives. 

We also need that the canonical map $T_{\alg}Y\rightarrow Y$ is an
effective epiomorphism of simplicial $T_{\alg}$-algebras. Since the
category is algebraic it suffices to see that this map is a
coequalizer of some diagram (i.e., it is a regular epimorphism). Since
it is a coequalizer of the canonical resolution we see it is an
effective epimorphism.
 
We also need to know that $T_{\alg}Y$ is a projective
$T_{\alg}$-algebra for any $Y$. By definition to be projective we
need for any effective epimorphisms $V\rightarrow W$ the induced
map \[sT_{\alg}(T_{\alg} Y, V)\rightarrow sT_{\alg}(T_{\alg} Y, W)\] is
surjective. Since $U$ preserves pullbacks and reflexive
coequalizer diagrams the argument from the previous
paragraph shows that a morphism of $T_{\alg}$-algebras is an
effective epimorphism if and only if it is an effective epimorphism
after applying $U$. The result now follows by adjointness. 

When conditions \eqref{item:universal-coefficient} and \eqref{item:algebraic-description} are satisfied, we can apply the Hurewicz homomorphism levelwise
to the cosimplicial space defining the $T$-algebra spectral sequence and
obtain an analogous cosimplicial graded object in the (computationally
accessible) category $\cD$.  The associated $E_2$ term is the cohomotopy of
this graded object as in \cite{Bou89}.  for well-chosen $\cD$, gives a purely
algebraic description of the homotopy $T$-algebra maps.

When the category of algebras $\cD_{T_{\alg}}$ has enough projectives and
is closed under finite limits we can
apply \cite[\S~1]{Qui70} to see that the category $sT_{alg,\pi_*Y}$ of
simplicial $T$-algebras over $\pi_*Y$ and its subcategory of abelian
group objects admit model structures satisfying the conditions of
\cite[II.5.(1),(2),(4)]{Qui67}.

}}
\newcommand{\saveblockS}{\saveblock[S]{
For the $n=3$ argument, we have elements on the 2-line but
  the obstruction class lives in $E_2^{2,1}$ which is spanned by duals
  of products $\sigma x_i \sigma x_j,  (i\neq j)$ (which has bidegree $(2,-2i-2j)$) times an element of $R$ in degree $1+2i+2j$. Since $R$ is concentrated in even degrees there is no such element.
}}
\newcommand{\saveblockT}{\saveblock[T]{See the comment at the end of the previous example.}}
\newcommand{\saveblockU}{\saveblock[U]{
Under this hypothesis the tor spectral sequence collapses so $\pi_*
(R^{\otimes_k n})\cong (\pi_*R)^{\otimes_{\pi_*k}n}$.  The homotopy fixed point spectral sequence collapses because everything is rational. This shows that $\pi_*TR=T_\alg \pi_*R$. Moreover $\pi_*R^{\otimes_{\pi_*k}n}/\Sigma_n$ is a free $\pi_* k$ module (we have the standard monomial basis), so the Ext spectral sequence also collapses. This gives the requisite criteria for \refthmB.

}}
\newcommand{\saveblockV}{\saveblock[V]{
Checked signs $dz=xy, -dz=yx\implies \alpha=xz, \beta=-yz, x(-yz)-y(xz)=-xyz+xyz=0$. An alternative proof uses juggling $\alpha y=y\alpha=<x,x,y> y=x<x,y,y>=x\beta$.
}}
\newcommand{\saveblockW}{\saveblock[W]{The homotopy type of the fibre does not depend on this choice}}
\newcommand{\saveblockX}{\saveblock[X]{
  One can construct this lift directly: the sphere maps to
  it's $K(1)$-localization by an $E_\infty$-ring map. The
  $K(1)$-localization sits in a fibration 
  \[
  S_{K(1)}\rightarrow K_p \xrightarrow{\psi^q-1} K_p
  \]
  There are compatible ring maps to the left and middle terms which
  induce the above map on 1 components. By this construction this is
  an infinite loop map.
}}
\newcommand{\saveblockY}{\saveblock[Y]{
Since the last map was an infinite loop map, $\Coker J$ is an infinite loop space and the fiber inclusion is an infinite loop map.
}}
\newcommand{\saveblockZ}{\saveblock[Z]{
Again the $D$-invariant comes from a construction as in the first
paragraph but instead we are taking 
\[
S_{K(1)}\rightarrow KO_2 \xrightarrow{\psi^3-1} KO_2.
\] 
Examination of the LES in homotopy gives the homotopy of the
connective cover of $KO\wedge \Sigma^{-1}KO$ through degree 2. Taking
a 1-connected cover we just recover $\eta^2$ and the rest of the
$Im~J$ spectrum.  This 1 connected cover can be constructed by taking
a 1-connected cover of the middle term and and a two connected cover
of the right term (note that $\psi^3$ acts identically on $\eta^2$ so the
difference lifts to the 2-connected cover).

To compare with HoS78: There are three $J$'s constructed there (first
additively as below and then their tensor analogues).

$J_1\times Z\rightarrow BO_2\times Z\rightarrow BSpin_2$: A
1-connected cover of this J is constructed as the fiber of the
analogous map using a 1-connected cover of $BO_2\times Z$. This agrees
with the above definition.

$J_2\rightarrow BSO_2\rightarrow BSpin_2$: This agrees with the above definition.

$J_3 \rightarrow BSO_2 \rightarrow BSO_2$: This incurs an extra '$\eta$' in degree 0. Again the one connected cover is constructed by replacing $BSO_2$ with $BSpin$.

}}
\newcommand{\saveblockZA}{\saveblock[ZA]{
The first positive stable stem is $\alpha$ which occurs in degree $2p-3\ge3$ and this is in the image of $J$ so it is detected by the $D$-invariant.
}}
\newcommand{\saveblockZB}{\saveblock[ZB]{
3-connected is saying we detect $\eta^2$ and the 2-primary $\bZ/8$ given by $\nu$. This is stated both at the end of 5.3.7 in Ravenel's book and 8.15 of his localization paper. The latter paper talks about $L_1 S$. But the answer for $K(1)$-local sphere only differs by my the torsion free + divisible parts which get moved around by the chromatic fracture square.
}}
\newcommand{\saveblockZC}{\saveblock[ZC]{
We are sweeping some more details under the rug here. These $T_{\alg}$
monads act on a graded $k$-modules, not sets, and they are graded so
we can't pull these examples out directly from Quillen. However all of
these examples come from finitary monadic categories over some product
of the category $\Set$ and so define algebraic categories in Quillen's
sense. 

}}
\newcommand{\completeblockZERO}{\completeblock[(initial)]{
  \textbf{Director's Cut}

  This version is compiled with additional content cut from the
  publication ($\backslash$showsaveblocks flag is set to true).  The content is offset by ``Begin
  completeblock \emph{name}'' and ``End completeblock \emph{name}''
  bubbles.  This content was more polished, but may not be completely
  compatible with later updates of the document.

  Item counters are reset after completeblocks, and no environments in
  completeblocks are numbered (doing so raises an error), so numbering
  outside of the completeblocks is identical to the published
  version.  

}}
\newcommand{\completeblockA}{
  \complete{\item[]}
  \completeblock[A]{
    \item $E_1^{0,0}=\ho\cC(UX,UY)$.
    \item $E_2^{0,0}=(\ho\cC)_{\h T}(UX, UY)$.  That is, a homotopy class
     $[f] \colon UX \to UY$ survives to the $E_2$ page if and only
     if it is a map of $\h T$-algebras\footnote{Note that the
     notion of \emph{homotopy} $T$-algebra is entirely distinct from
     the notion of a homotopy algebra in the context of algebras over
     operads. If $T$ comes from an operad then our $T$-algebras are
     homotopy algebras.} in the homotopy category.
    
}}
\newcommand{\completeblockB}{\completeblock[B]{
\begin{defn*}
  \begin{samepage}
   Let $G \cn \cC \to \cD$ be a functor, and let $D \cn \cI \to \cC$
   be a diagram in $\cC$.
   \begin{itemize}
   \item $G$ \emph{preserves} colimits of $D$ if $GD \to GA$ is a
     colimit in $\cD$ whenever $D \to A$ is a colimit in $\cC$.  
   \item $G$ \emph{reflects} colimits of $D$ if $D \to A$ is a
     colimit in $\cC$ whenever $GD \to GA$ is a colimit in $\cD$.
   \item $G$ \emph{creates} colimits of $D$ if $D$ has a colimit
     whenever $GD$ has a colimit and $G$ preserves and
     reflects colimits of $D$.
   \end{itemize}
  \end{samepage}
 We say that $G$ preserves, reflects, or creates colimits if it does
 so for all diagrams.  Similar terminology is used for preservation,
 reflection, or creation of limits.
\end{defn*}

}}
\newcommand{\completeblockC}{\completeblock[C]{
\begin{proof}
  %This has been double checked.
  Let  $\alpha\colon \colim TX_i \rightarrow T\colim X_i$ be the map
  induced by applying $T$ to the maps $X_i\rightarrow \colim X_i$.
  Since $T$ commutes with reflexive coequalizers, every colimit in
  $\cC_T$ can be calculated via the following formula in $\cC$:
  \begin{equation*}\label{eqn:colimits-of-t-algebras}
	\xymatrix@C=13ex{
	  \coeq \big[ T\colim TX_i\ar@<1ex>[r]^-{T\colim
	  \mu_{X_i}}\ar@<-1ex>[r]_-{\mu\circ \alpha} &
	  T\colim X_i\ar@/_1.85pc/[l]_-{Te_{X_i}} \big]\\
	}
  \end{equation*}
\end{proof}

}}
\newcommand{\completeblockD}{\completeblock[D]{\begin{proof}
  Since $\cC$ and $\cD$ are locally presentable the first two
  conditions guarantee that $U$ admits a left adjoint $F$
  \cite[Thm.~5.5.7]{Bor94a}. The third condition is stronger than the two
  conditions of the monadicity theorem, and thus we have $\cD \hty
  \cC_T$. The last two conditions guarantee that $T=UF$ commutes with
  the stated colimits.
\end{proof}
}}
\newcommand{\completeblockE}{
  \complete{\item[]}
  \completeblock[E]{
    \item If $\cT=\FinSet^{\op}$ and $i$ is the identity functor then
         $\model{\cC}{\cT}$ is the category of theories in $\cC$.}
}
\newcommand{\completeblockF}{\completeblock[F]{
    \item Let $\cT_{\Ab}$ be the theory defined as in \eqref{enum:group}
      but with 
      \[
      \cT_{\Ab}(m,n)=\AbGroup(F\{n\},F\{m\}),
      \]
      where $F\{m\}$ is the free abelian group on $m$ elements, then
      $\model{\Set}{\cT_{\Ab}}$ is equivalent to the category of abelian groups.
    
}}
\newcommand{\completeblockG}{\completeblock[G]{ \item Let $\cT_{Comm}$ be the theory defined as in \eqref{enum:group}
      but with 
      \[
      \cT_{Comm}(m,n)=\Comm(F\{n\},F\{m\}),
      \]
      where $F\{m\}$ is the free commutative ring on $m$ elements, then
      $\model{\Set}{\cT_{Comm}}$ is equivalent to the category of commutative
      rings.
}}
\newcommand{\longsaveblockH}{\longsaveblock[H]{
    \item \ \\
    \saveblockD{}
}}
\newcommand{\longsaveblockJ}{\longsaveblock[J]{
\begin{thm*}
  There are correspondences between 
  \begin{enumerate}
    \item Finitary monadic categories over $\Set^S$.
    \item Categories of models in $\Set$ of $S$-sorted theories.
    \item Algebraic categories, in the sense of Quillen, generated by $S$ elements (i.e.,
    cocomplete and finitely complete categories, with a set $S$ of small projective
    generators).
  \end{enumerate}
\end{thm*}
\begin{proof}
  The first two are equivalent by the book of Adamek, Rosicky, and
  Vitale.

First we recall some relevant definitions from
\cite{Qui67}.

Let $\cC$ be a category with finite limits. A morphism $f\in \cC(X,Y)$
is called an \emph{effective epimorphism} if it fits into a coequalizer
diagram 
\[ X\times_{Y}X\rightrightarrows X\xrightarrow{f} Y \] 
where the arrows to be coequalized are the two projections to $X$. 
Note that the diagonal map $X\rightarrow X\times_Y X$ makes this into a
reflexive coequalizer diagram. 

An object $P\in \cC$ is \emph{projective} if for every effective epimorphism
$f\colon X\rightarrow Y$ the induced homomorphism \[ 
\cC(P,X)\rightarrow \cC(P,Y) \] is surjective. We will say that
$\cC$ has \emph{enough projectives} if for every object $X$ there is an
effective epimorphism $P\rightarrow X$ with $P$ projective. 

  The category of models over an $S$-sorted theory has the
  representable functors as small projective generators and is closed
  under colimits so it is algebraic.

  Using the set of small projective generators we can construct a
  functor \[U\colon\cC \rightarrow \Set^S \] sending $X$ to $\prod
  \cC(Fs,X)$. Since the generators are small this preserves filtered
  colimits. Since they are generators this functor is faithful and
  conservative. Now $\cC$ is an exact category so we can apply Theorem 18.1
  from Adamek, Rosicky, and Vitale To see that $U$ preserves reflexive
  coequalizers if and only if it preserves regular epimorphisms, which
  in a category with finite limits, such as $\cC$ are the same as
  effective epimorphisms. In $\Set^S$ effective epimorphisms are the
  same as epimorphisms so we need to know that $U$ takes effective
  epimorphisms to surjections. This is the definition of what it means
  for $Fs$ to be projective. Since $\cC$ and $\Set^S$ are locally
  presentable and $U$ clearly preserves limits we can now apply
  \Cref{prop:barr-beck-for-presentable-categories} to see that $\cC$
  is finitary monadic over $\Set^S$. 

  By using the construction given and the correspondence between the
  first two collections we can easily see that we obtain equivalent
  categories when we go from $\cC$ to its category of models over a
  theory and then back to an algebraic category.

  \end{proof}
  Note that as a consequence of these correspondences we see that any
  algebraic category is complete.
  We can also see that our finitary forgetful functor preserves all
  sifted colimits and not just the filtered colimits.

}}
\newcommand{\completeblockK}{\completeblock[K]{
\begin{rmk*}\label{rem:colimits-in-theories}
  We can apply the discussion of limits and colimits in
 \Cref{sec:monads} to analyze these constructions in $\Set^{S}_T$.  For
example, since $\Set^{S}$ is complete, $\Set^{S}_T$ is complete and the
  forgetful functor creates all limits. By applying
\eqref{eqn:monad-associated-to-a-theory} we see that since sifted
 colimits commute with products in sets and all colimits commute with
 coends, $T$ commutes with sifted colimits. Hence the forgetful
 functor reflects all sifted colimits. The remaining colimits can be
 constructed via \eqref{eqn:colimits-of-t-algebras}.
\end{rmk*}

}}
\newcommand{\completeblockL}{\completeblock[L]{
\begin{prop*}\label{prop:simplicial-over-categories}
  Suppose that $\cC$ is a bicomplete simplicial category and $X\in \cC$.
  Then the overcategory $\cC_{\downarrow X}$ admits the structure of a
  bicomplete simplicial category such that:
  \begin{enumerate}
  \item The forgetful functor $U\colon \cC_{\downarrow X}\rightarrow
    \cC$ creates tensors and colimits.
  \item The limit of a diagram $F\colon \cI\rightarrow \cC_{\downarrow
    X}$ may be calculated as the limit of the diagram \[F^{\prime}
    \colon \cJ \rightarrow \cC \] where $\cJ$ is obtained by adjoining a
    terminal object to $\cI$ and $F^{\prime}$ is the extension of $F$
    which maps the terminal object to $X$.
  \item The mapping spaces $\cC_{\downarrow X}(Y,Z)$ can be
    constructed as the following pullback in $\sSet$:
    \[\xymatrix{
    \cC_{\downarrow X}(Y, Z)\ar[r]\ar[d]& \cC(Y, Z) \ar[d]^-{g} \\
    {*} \ar[r]^-{f} & \cC(Y,X)
    }\]
    Here $f$ picks out the structure map $Y\rightarrow X$ and $g$ is
    induced by the structure map for $Z$.
  \item For a simplicial set $K$ and an object $Y\rightarrow X$ of
    $\cC_{\downarrow X}$ the cotensor $Y^{K}_{\downarrow X}$ in
    $\cC_{\downarrow X}$ can be constructed as the following pullback in 
    $\cC$:
    \[\xymatrix{ 
      Y^{K}_{\downarrow X}\ar[d]\ar[r]& Y^K \ar[d]^-{g}\\
      X \ar[r]^-{f} & X^K
    }\]
    Here $f$ is induced by the constant map $K\rightarrow *$ and $g$
    is induced by the structure map for $Y$.
  \end{enumerate}
\end{prop*}
\begin{proof}
  Verifying the statements concerning limits and colimits is immediate
  and left to the reader. It is also clear that the mapping spaces have
  the correct underlying hom-sets and that the tensor and cotensor
  are functorial constructions landing in $\cC_{\downarrow X}$. 
 
  By taking products of the pullback diagrams defining the mapping
  spaces $\cC_{\downarrow X}(-,-)$ and using the composition product of
  mapping spaces in $\cC$ we obtain a composition product of mapping
  spaces in $\cC_{\downarrow X}$. This product is associative and unital
  because the composition product on $\cC$ is associative and unital.

  We now check compatibility of the mapping space, tensor, and cotensor
  functors. By definition, the following diagram of sets is a pullback 
  \[\xymatrix{
    \cC_{\downarrow X}(K\otimes Y, Z)\ar[r]\ar[d] &\cC(K\otimes Y,
    Z)\cong \sSet(K,\cC(Y,Z)) \ar[d]\\
    \ast\cong \sSet(K,\ast) \ar[r] &\cC(K\otimes Y, X)\cong \sSet(K,\cC(Y, X))
  }
  \]
  where the isomorphisms follow from the compatibility with the tensor
  and simplicial mapping objects in $\cC$. Using these isomorphisms we
  see that this pullback is, up to canonical isomorphism, obtained by
  applying the functor $\sSet(K,-)$ to the pullback defining the mapping
  space 
  $\cC_{\downarrow X}(Y, Z)$.  So \[\sSet(K,\cC_{\downarrow X}(Y,Z))\cong
  \cC_{\downarrow X}(K\otimes Y, Z)\] as desired. 

  To check compatibility with the cotensor we apply the functor
  $\cC(Y,-)$ to the pullback diagram defining $Z^K_{\downarrow X}$ to
  obtain the right hand square in the following diagram:
  \[
  \xymatrix{
    \cC_{\downarrow X}(Y, Z^{K}_{\downarrow X})\ar[r]\ar[d] & \cC(Y, Z^{K}_{\downarrow X})\ar[r]\ar[d] &\cC(Y, Z^K) \cong \sSet(K,\cC(Y,Z)) \ar[d]\\
    \ast \cong \sSet(K,\ast) \ar[r] & \cC(Y,X) \ar[r] &\cC(Y, X^K)\cong
    \sSet(K,\cC(Y,X))}
  \]
  The left hand square is the defining pullback for the hom-set and we
  apply the reasoning of the previous paragraph to conclude
\[\sSet(K,\cC_{\downarrow X}(Y,Z))\cong
  \cC_{\downarrow X}(Y, Z^{K}_{\downarrow X}).\]
\end{proof}

\saveblockG{}
It is straightforward to see that with the simplicial structure above
the forgetful functor $U\colon \cC_{\downarrow X}\rightarrow \cC$ is
simplicial and admits a simplicial right adjoint 
\[Z\mapsto (Z\times X\xrightarrow{\pi_{X}} X). \]

\begin{rmk*}
  \Cref{prop:simplicial-over-categories} clearly dualizes to show that
  undercategories in simplicial categories are also simplicial categories
  in a natural way.
\end{rmk*}

}}
\newcommand{\completeblockM}{\completeblock[M]{
  \begin{proof}
  Since all limits and colimits in $\sSet^S$ are computed level-wise, and
  sifted colimits commute with products in $\Set$, we see that sifted
  colimits commute with products in $\sSet^S$. It follows that our
  discussion of limits and colimits in \Cref{rem:colimits-in-theories}
  extends to this context and $T$ commutes with reflexive coequalizers
  and filtered colimits, so we can apply
  \Cref{prop:bicomplete-cat-algebras}.
  
  The only remaining claim to check is the explicit identification of the
  tensor. By \eqref{eqn:tensor} we see 
  \[ 
  K\otimes X =\coeq\big[F_T(K\otimes UX)\leftleftarrows F_T(T(K\otimes UX
  ))\big]. 
  \] 
  Where this coequalizer can be calculated in simplicial sets; in
  particular it can be evaluated levelwise. 
  Now the tensor in simplicial sets is just cartesian product. Since
  this is part of a closed cartesian monoidal structure we obtain  
  \[
  (K\otimes UX)_n \cong \coprod_{k\in K_n} X_n. 
  \]
  So our defining coequalizer diagram for the tensor product levelwise
  is of the form in \eqref{eqn:colimits-of-t-algebras} which is the
  defining coequalizer diagram for the coproduct in
  $\model{\Set}{\cT}$.
\end{proof}
}}
\newcommand{\completeblockN}{\completeblock[N]{ An \emph{operad} is a symmetric sequence such
that for each partition of $n$ with $k$ parts, $i_1 + \cdots + i_k =
n$, there is a structure map
\begin{align*}
  C(k) \times C(i_1) \times \cdots \times C(i_k) & \longrightarrow C(n).\\
  (f , g_1, \ldots , g_n) & \longmapsto f\{g_1, \ldots , g_k\}
\end{align*}
These structure maps satisfy the following axioms \cite{May72}:
\saveblockH{} 
\begin{hypothenumerate}
\item For $g \in C(n)$, $1\{g\} = g$.
\item For $f \in C(k)$, $f\{1, \ldots , 1\} = f$.
\item For $f \in C(k)$, $g_j \in C(i_j)$, and $h_{j,l} \in
  C(m_{j,l})$,
  \[
  f\{g\{h_{1,1}, \ldots, h_{1,i_1}\}, \ldots, f\{g\{h_{k,1}, \ldots, h_{k,i_k}\}\}
  = (f\{g_1, \ldots, g_k\})\{h_{1,1}, \ldots, h_{k, i_k}\}
  \]
\item For $f$ and $g$ as above and $\sigma \in \Sigma_k$,
  \[
  (f\sigma)\{g_1, \ldots, g_k\} = f\{g_{\sigma(1)}, \ldots,
  g_{\sigma(k)}\}
  \]
\item For $f$ and $g$ as above and $\sigma_j \in \Sigma_{i_j}$,
  \[
  f\{g_1 \sigma_1 , \ldots , g_k \sigma_k\} = (f\{g_1, \ldots,
  g_k\}) \circ (\sigma_1 \times \cdots \times \sigma_k).
  \]
\end{hypothenumerate}

Given our emphasis on monads, it will be helpful to have an
alternative definition which we now summarize---additional details are
found in \cite[\S~11]{Rez97}. 
}}
\newcommand{\completeblockO}{\completeblock[O]{
\begin{proof}
  To unclutter the notation we will surpress the symmetric monoidal
  functor $i$ in the arguments below.

  Let $T_{\sO}$ be the monad associated to the operad $\sO$ as in \eqref{eq:monad-associated-to-operad}.  Suppose that $I$ indexes either a reflexive coequalizer or filtered colimit
  diagram, depending on which part of hypothesis \eqref{it:tensor-commutes}
  holds.  Since colimits commute with coproducts, we see that
  $T_{\sO}$ commutes with colimits over $I$:
  \begin{align*}
    T_{\sO}\colim_{I} X_i  &= \coprod_{n\geq 0} \sO(n)\otimes
    (\colim_{I} X_i)^{\otimes n }\\
      &\cong \coprod_{n\geq 0} \sO(n)\otimes \colim_{I} ( X_i^{\otimes n })\\
      &\cong \colim_{I} \left(\coprod_{n\geq 0} \sO(n)\otimes
      X_i^{\otimes n}\right)\\
      &= \colim_{I} T_{\sO} X_i.
  \end{align*}
  Hence by \cref{prop:bicomplete-cat-algebras}, $T_{\sO}$ has a
  bicomplete category of algebras.

  Similarly, we obtain natural maps $\eta_{K,X}$ via the following
  composite
  \begin{alignat*}{2}
    K\otimes T_{\sO} X  & = K\otimes \coprod_{n\geq 0} \sO(n)\otimes
    X^{\otimes n } & &\\
      & \cong \coprod_{n\geq 0} K\otimes (\sO(n)\otimes X^{\otimes n
      }) 
      & \;\longrightarrow & \coprod_{n\geq 0} K^n \otimes (\sO(n)\otimes
      X^{\otimes n})\\
      & &\cong & \coprod_{n\geq 0} \sO(n)\otimes
      (K\otimes X)^{\otimes n}\\
      & & = & T_{\sO} (K\otimes X).
  \end{alignat*}
  Pullback along these maps defines a morphism of simplicial mapping spaces given
  on $k$-simplices by 
  \[
  \sC(\Delta^k\otimes X,Y) \xrightarrow{T_{\sO}}
  \sC(T_{\sO}(\Delta^k\otimes
  X),T_{\sO}Y)\xrightarrow{\eta_{\Delta^k,X}^*}\sC(\Delta^k \otimes
  T_{\sO}X, T_{\sO}Y).
  \]
  This map clearly preserves simplicial units.  Compatibility with
  composition is verified by the commutative diagram in
  \cref{fig:operad-monad-compatibility-with-composition} where we let
  $d\colon \Delta^k\rightarrow \Delta^{k}\times \Delta^{k}$ denote the
  diagonal map.

\begin{figure}
  \[ \resizebox{15cm}{!}{ \xymatrix
    %%% @C controls inter-column spacing
    %%% @R controls inter-row spacing
    @C=-85pt
    @R=40pt
    {
      % --------- row 1 -------------------
      \sC(\Delta^k\otimes Y, Z)\times \sC(\Delta^k\otimes X, Y)
      \ar[dd]_{1\times (\Delta^k\otimes 1)} 
      \ar[rd]^{T_{\sO}}& &
      \sC(\Delta^k\otimes T_{\sO}(Y), T_{\sO}Z)\times
      \sC(\Delta^k\otimes T_{\sO}(X), T_{\sO}Y)
      \ar[dd]^{1\times (\Delta^k\otimes 1)} \\ 
      % VVVVVVV  middle column VVVVVVVV
      & 
      \sC(T_{\sO}(\Delta^k\otimes Y), T_{\sO}Z)\times
      \sC(T_{\sO}(\Delta^k\otimes X), T_{\sO}Y)
      \ar[dd]_(.3){1\times T_{\sO}(\Delta^k\otimes 1)} 
      \ar[ru]^{\eta^* \times \eta^*} & \\
      % --------- row 2 -------------------
      \sC( \Delta^k\otimes Y, Z)\times \sC( (\Delta^k\times
      \Delta^k)\otimes X, \Delta^k\otimes Y)
      \ar[dd]_{\circ} 
      \ar[rd]^{T_{\sO}} & &
      \hspace{-3ex}\sC( \Delta^k\otimes T_{\sO} Y, T_{\sO}Z)\times \sC( (\Delta^k\times
      \Delta^k)\otimes T_{\sO}X, \Delta^k\otimes T_{\sO} Y)
      \ar[dd]^{\circ}\hspace{3ex} \\ 
      % VVVVVVV  middle column VVVVVVVV
      &
      \hspace{2ex}\sC( T_{\sO}(\Delta^k\otimes Y), T_{\sO}Z)\times \sC( T_{\sO}((\Delta^k\times
      \Delta^k)\otimes X), T_{\sO}(\Delta^k\otimes Y))
      \ar[dd]_{\circ} \hspace{-2ex}& \\
      % --------- row 3 -------------------
      \sC( (\Delta^k\times \Delta^k)\otimes X, Z)
      \ar[dd]_{d^*}
      \ar[rd]^{T_{\sO}} & & 
      \sC( (\Delta^k\times \Delta^k)\otimes (T_{\sO}X), T_{\sO}Z)
      \ar[dd]^{d^*}\\ 
      % VVVVVVV  middle column VVVVVVVV
      &
      \sC( T_{\sO}((\Delta^k\times \Delta^k)\otimes X), T_{\sO}Z)
      \ar[dd]_{d^*}
      \ar[ru]^{\eta^*} & \\
      % --------- row 4 -------------------
      \sC( \Delta^k\otimes X, Z)
      \ar[rd]^{T_{\sO}} & &
      \sC( \Delta^k\otimes T_{\sO}(X), T_{\sO}Z)\\
      % VVVVVVV  middle column VVVVVVVV
      &
      \sC( T_{\sO}(\Delta^k\otimes X), T_{\sO}Z)
      \ar[ru]^{\eta^*} &      
    }} \]  
\caption*{Diagram to verify compatibility of $T_{\sO}$ with
  composition.}
\label{fig:operad-monad-compatibility-with-composition}
\end{figure}

  To verify that the structure maps for $T_{\sO}$ are simplicial natural
  transformations, one checks that the following diagrams
  commute where $e$ and $\mu_T$ are the unit and multiplication map for
  our monad $T_{\sO}$ and $f\in \sC(\Delta^k\otimes X,Y)$:

  \[\xymatrix{
   & \sC(T_{\sO}(\Delta^k \otimes X),T_{\sO}Y)\ar[d]^{\eta^*} 
   \\ 
   \sC(\Delta^k\otimes X,Y) \ar[dr]^{e}\ar[ur]^{T_{\sO}} &
   \sC(\Delta^k \otimes (T_{\sO}X),T_{\sO}Y)\ar[d]^{e^*}\\
    & \sC(\Delta^k \otimes X,T_{\sO}Y)
   }
  \]
  \[\xymatrix{
    \Delta^k\otimes T^2_{\sO} X \ar[r]^{\eta} \ar[d]^{\mu_T} & T_{\sO}(\Delta^k\otimes
    T_{\sO}X) \ar[r]^{\eta} & T^2_{\sO} (\Delta^k \otimes X) \ar[d]^{\mu_T}\ar[r]^{T_{\sO}^2 f} &
    T_{\sO}^2 Y\ar[d]^{\mu_T} \\
    \Delta^k\otimes T_{\sO} X \ar[rr]^{\eta} & &T_{\sO} (\Delta^k \otimes X)
    \ar[r]^{T_{\sO}f} & T_{\sO} Y
  } \]

\end{proof}

}}
\newcommand{\completeblockP}{\completeblock[P]{
\begin{defn*}\label{def:retract}
  A morphism $f\in \cC(A,B)$ is a {\em{retract}} of a morphism
  $g\in \cC(C,D)$ provided there are morphisms making the following
  diagram commute.
  \[\xymatrix{
  A\ar[d]^f\ar[r]\ar@/^/[rr]^{id} & C\ar[d]^g\ar[r] & A\ar[d]^f \\
  B\ar[r]\ar@/_/[rr]_{id} & D\ar[r] & B 
  }\]
\end{defn*}

\begin{defn*}
  Let $f \cn A \to B$ and $g \cn C \to D$ be maps in $\sC$.  We say
  that $f$ has the \emph{left lifting property} (LLP) with respect to
  $g$ or equivalently that $g$ has the \emph{right lifting property}
  (RLP) with respect to $f$ if for any maps $A \to C$ and $B \to D$
  making the square below commute, there is a lift of $f$ indicated by
  the dashed arrow commuting with the other maps in the diagram.
  \[\xymatrix{
    A \ar[d]_-{f} \ar[r] & C \ar[d]^-{g} \\
    B \ar[r] \ar@{-->}[ur] & D
  }\]
\end{defn*}

\begin{defn*}\label{def:model-category}
  A {\em{model category}} is a category $\cC$ with three distinguished
  classes of morphisms, the weak equivalences, the fibrations, and the
  cofibrations. Each of these classes contains all identity morphisms
  and is closed under composition. Moreover, these morphisms satisfy
  the axioms listed below:
  \begin{itemize}
	\item[MC1:]\label{it:MC1} $\cC$ is bicomplete.
  	\item[MC2:] If $f$ and $g$ are two composable morphisms in $\cC$
	  then if any two of $f, g,$ and $gf$ are weak equivalences then so
	  is the third.
	\item[MC3:] Weak equivalences, fibrations, and cofibrations are
	  closed under retracts.
	\item[MC4:] \begin{itemize}
		\item Every cofibration has the LLP with respect to all acyclic
		  fibrations, i.e., fibrations which are also weak equivalences.
		\item Every fibration has the RLP with respect to all acyclic
		  cofibrations, i.e., cofibrations which are also weak
		  equivalences.
	  \end{itemize}
	\item[MC5:]\label{it:MC5} Any map can be factorized through an
	  acyclic cofibration followed by a fibration and by a cofibration
	  followed by an acylic fibration.
  \end{itemize}
\end{defn*}

}}
\newcommand{\completeblockQ}{\completeblock[Q]{
\begin{defn*}
  A functor between two model categories is called \emph{left Quillen}
  if it is a left adjoint and preserves cofibrations and acyclic
  cofibrations. It is called \emph{right Quillen} if it is a right
  adjoint and preserves fibrations and acyclic fibrations. An adjoint
  pair is called a \emph{Quillen adjunction} if the left
  adjoint is left Quillen or, equivalently, the right adjoint is right Quillen.
\end{defn*}

}}
\newcommand{\completeblockR}{\completeblock[R]{
In the interest of being self-contained we recall
some of the relevant definitions and results from
\cite[\S~2.1]{Hov99}.
\begin{defn*}\label{def:I-injective}
  Let $I$ be a class of maps in $\cC$. 
  \begin{itemize}
  \item A map is $I${\em{-injective}} if it has the RLP with respect
    to every map in $I$. The class of $I$-injective maps is denoted
    $I$-inj.
  \item A map is $I${\em{-projective}} if it has the LLP with respect
    to every map in $I$. The class of $I$-projective maps is denoted
    $I$-proj.
  \item A map is an $I${\em{-cofibration}} if it has the LLP with
    respect to every map in $I$-inj.  The class of $I$-cofibrations
    is $(I$-inj)-proj and denoted $I$-cof.
  \item A map is an $I${\em{-fibration}} if it has the RLP with
    respect to every map in $I$-proj. The class of $I$-fibrations 
    is $(I$-proj)-inj and denoted $I$-fib.
  \end{itemize}
\end{defn*}

Recall that for a set of maps $I$ in a category $\cC$ a
{\em{relative}} $I${\em{-cell complex}} is a transfinite composition
of pushouts of elements of $I$. We say that an object is an $I${\em{-cell
complex}} if the map from the initial object to it is a relative
$I$-cell complex. If $I$ is the set of generating cofibrations then we
just say that the map is a subcell complex and we say an object is
cellular if the map from the initial object to it is a subcell complex. 

\begin{defn*}\label{def:relative-smallness}
  Let $I$ be a collection of morphisms in a cocomplete category $\cC$ 
  and $\kappa$ a cardinal. We say that $A\in \cC$ is
  $\kappa${\em{-small relative to }}$I$ if, for all $\kappa$-filtered
  ordinals $\lambda$ and all $\lambda$-sequences $X_i$ such that each
  map $X_{\beta}\rightarrow X_{\beta+1}\in I$ for $\beta+1<\lambda,$
  the map of sets 
  \[
  \colim_{\beta<\lambda}\cC(A,X_{\beta})\rightarrow
 \cC(A,\colim_{\beta<\lambda}X_{\beta})
  \]
  is an isomorphism. 
  
  We say that $A$ is {\em{small relative}} to
  $I$ if it is $\kappa$-small relative $I$ for some
  $\kappa.$ 
\end{defn*}

\begin{defn*}\label{def:cofibrantly-generated}
  A model category $\cC$ is {\em{cofibrantly generated}} provided
  there are sets of maps $I$ (called the {\em{generating cofibrations}})
  and $J$ (called the {\em{generating trivial cofibrations}}) such that
  \begin{itemize}
  	\item The domains of the maps of $I$ are small relative to
	  $I$-cell.
  	\item The domains of the maps of $J$ are small relative to
	  $J$-cell.
	\item The class of fibrations is $J$-inj.
	\item The class of trivial fibrations is $I$-inj.
  \end{itemize}
\end{defn*}

With this terminology available, we can now state conditions under which
the maps in $I$ and $J$ generate a model structure.
\begin{thm*}{\cite[Thm.~2.1.19]{Hov99}}
  \label{thm:identifying-cofibrantly-generated-model-categories}
  Suppose $\cC$ is a bicomplete category with a subcategory $\cW$ and
  distinguished sets $I$ and $J$ of maps, then there is a cofibrantly
  generated model structure on $\cC$ with $I$ (resp.~$J$) the generating
  (trivial) cofibrations, and $\cW$ the weak equivalences if and only
  if the following conditions are satisfied:
  \begin{hypothenumerate}
  \item \label[property]{enum:two-out-of-three} The subcategory $\cW$
    satisfies the two out of three property and is closed under
    retracts.
  \item \label[property]{enum:i-small} The domains of $I$ are small relative
    to $I$-cell.
  \item \label[property]{enum:j-small} The domains of $J$ are small relative
    to $J$-cell.
  \item \label[property]{enum:i-injectives} $I\text{-inj} \subset (\cW\cap
    J\text{-inj})$.
  \item \label[property]{enum:j-injectives} Either $(\cW\cap I\text{-cof})
    \subset J\text{-cof}$ or $(\cW\cap J\text{-inj}) \subset
    I\text{-inj}$.
  \item \label[property]{enum:acyclic-cells} $J\text{-cell} \subset (\cW\cap
    I\text{-cof})$.
  \end{hypothenumerate}
\end{thm*}

\begin{prop*}\label{prop:overcategory-is-a-model-category}
  Suppose that $\cC$ is a bicomplete simplicial model category and
  $X\in \cC$. Then the overcategory $\cC_{\downarrow X}$ is a
  bicomplete simplicial model category such that a morphism is a
  cofibration, resp.~fibration, resp.~weak equivalence if and only if
  it is a cofibration, resp.~fibration, resp.~weak equivalence in
  $\cC$. Moreover if $\cC$ is cofibrantly generated then so is
  $\cC_{\downarrow X}$.
\end{prop*}
\begin{proof}

  By \Cref{prop:simplicial-over-categories} we know that
  $\cC_{\downarrow X}$ is a bicomplete simplicial category. With
  cofibrations, fibrations, and weak equivalences defined as in the
  statement, verifying the axioms in \Cref{def:model-category} is
  immediate given that they are satisfied in $\cC$.

  From \Cref{prop:simplicial-over-categories} we know that the
  forgetful functor $\cC_{\downarrow X}\rightarrow \cC$ preserves
  simplicial tensors and pushouts. So the induced pushout-product map
  from Axiom [SM7a] in $\cC_{\downarrow X}$ is just the
  pushout-product map in $\cC$.  Since a map in $\cC_{\downarrow X}$
  is a cofibration, resp.~weak equivalence if and only if the map in
  $\cC$ is such, we see that this model structure is simplicial.

  If $I$ is a set of generating (acyclic) cofibrations in $\cC$, then
  we set the generating (acyclic) cofibrations in $\cC_{\downarrow X}$
  to be the set of diagrams of the form 
  \[ \xymatrix{ Y\ar[rr]^{f}\ar[rd] & & Z\ar[ld]\\ & X & } \]
  where $f\in I$. If $A$ is the domain of a generating cofibration in
  $\cC_{\downarrow X}$ and $Z=\colim_{\beta<\lambda} Z_{\beta}$ is an
  $I$-cell, then we need to verify that the canonical map
  \[
  \iota\colon \colim_{\beta<\lambda}\cC_{\downarrow X}(A,Z_{\beta})\rightarrow
  \cC_{\downarrow X}(A,\colim_{\beta<\lambda}Z_{\beta})
  \] 
  is an isomorphism. Since colimits in $\cC_{\downarrow X}$ may be
  calculated in $\cC$, an $I$-cell diagram in $\cC_{\downarrow X}$
  forgets to a corresponding cell diagram in $\cC$.  The hom sets in
  $\cC_{\downarrow X}$ are computed by a pullback of those in $\cC$
  and filtered colimits commute with finite limits in $\Set$. Now
  $\iota$ is an isomorphism because $A$ is small relative to $I$-cell
  diagrams in $\cC$.

  The remaining lifting conditions of \Cref{def:cofibrantly-generated}
  are immediate from the definitions of the generating (acyclic)
  cofibrations, fibrations, and weak equivalences.
\end{proof}
}}
\newcommand{\completeblockS}{\completeblock[S]{
\begin{proof}
  For the model category structure we apply
  \Cref{thm:identifying-cofibrantly-generated-model-categories}. 
  By \Cref{prop:bicomplete-cat-algebras} we know that $\cC_T$ is a
  bicomplete category. Since we want $U$ to right induce the model
  structure we set the weak equivalences to be those maps in $\cC_T$
  which project to weak equivalences in $\cC$. We set $F_T I$ and $F_T
  J$ to be the generating cofibrations and acyclic cofibrations. 

  Property \eqref{enum:two-out-of-three} is satisfied in $\cC_T$ because it is
  satisfied in $\cC$. While \eqref{enum:i-small} and \eqref{enum:j-small} are
  satisfied by assumption. By applying the adjunction we see the
  classes $F_TI$-inj and $F_TJ$-inj are precisely those maps in
  $\cC_T$ that map to $I$-inj and $J$-inj respectively under
  $U$. Consequently, property \eqref{enum:i-injectives} and the second condition
  in property \eqref{enum:j-injectives} hold in $\cC_T$ because they hold in
  $\cC$.

  To verify property \eqref{enum:acyclic-cells} we note that by assumption
  $F_TJ$-cell$\subset \cW$. To see that every $F_T J$-cell is in $F_T
  I$-cof we must show that each such map has the LLP with respect to
  $F_T I$-inj. By property \eqref{enum:i-injectives} we know these maps are $F_T
  J$-inj so each map in $F_T J$ has the LLP with respect to these
  maps, which we can use to inductively construct lifts in cellular
  pushout diagrams of the following form, where $f$ is in $F_T I$-inj
  and the lefthand square is a pushout:
  \[ \xymatrix{ 
    {\coprod} F_T A_{i} \ar[d]  \ar[r] & X_{\alpha}\ar[d] \ar[r]&
    Y \ar[d]^{f}\\ 
    {\coprod} F_T B_i\ar[r]\ar@{-->}|(.565){\hole}[urr] &
    X_{\alpha+1}\ar[r]\ar@{-->}[ur] & Z 
  }\]
  This and transfinite induction can then be used to prove we get a lift
  in the general cellular diagram:
  \[ \xymatrix{ X_{-1}\ar[d] \ar[r]& Y \ar[d]\\
     \colim_{\beta<\lambda} X_{\beta}\ar[r]\ar@{-->}[ur] & Z
  }\]
  as desired.

  All that remains is to check Axiom (SM7) which is immediate
  since cotensors, fibrations, and weak equivalences are calculated in
  $\cC$, which we assumed satisfied this axiom.
\end{proof}
}}
\newcommand{\longsaveblockT}{\longsaveblock[T]{
  Assuming the fibrant replacement functor we need to show that a $J$
  cell diagram $i\colon X\rightarrow Y$ is a weak equivalence. We will show
  this by constructing, up to a zig-zag of weak equivalences, a
  morphism $Y\rightarrow X$ such that the two composites are represent
  the identity map in the homotopy category. We can use the
  small object argument to construct  lifts 
  \[\xymatrix{ 
    X\ar[d]\ar[r] & QX\ar[d]\\ 
    Y\ar[r]\ar[ur]^{f} & \ast
  }\]
  The map $f$ is our map back and it factors the canonical map
  $X\rightarrow QX$ which represents the identity in the homotopy
  category. The other composite is $Y\rightarrow QX\xrightarrow{Qi}
  QY$ we will show that this map is simplicially homotopic (using the
  path object) to the map $Y\rightarrow QY$ representing the identity.
  To do so we need to construct a lift of the map $Y\rightarrow
  QY\times QY$, which is our given composite on one component and the
  `identity' map on the other, to $QY^{\Delta^1}$ this follows since
  by assumption (or the definition of the cotensor structure and the
  class of fibrations) the map $QY^{\Delta^{1}}\rightarrow QY\times QY$  is a
  fibration so our acyclic cofibration lifts. Since both composites
  are simplicially homotopic to the identity and $QY$ is fibrant they
  represent the identity in the homotopy category of $\cC$ from which
  it follows that the original map is a cofibration.
}}
\newcommand{\completeblockU}{\completeblock[U]{
  \begin{hypothenumerate}
  \item $B_nX=(F_TU)^{n+1}X$.
  \item The $i$th face map
    \[ 
    d_i\colon (F_TU)^{n+1}X\rightarrow (F_T U)^{n}X,\ 0\leq i \leq n
    \] 
    is obtained by applying the counit in position $i+1$:
    \[
    (F_TU)^i \; F_TU \; (F_TU)^{n-i} X \to 
    (F_TU)^i \; \Id \; (F_TU)^{n-i} X.
    \]
  \item The $i$th degeneracy map
    \[ 
    s_i\colon (F_TU)^{n+1}X\rightarrow (F_T U)^{n+2}X,\ 0\leq i\leq n
    \] 
    is obtained by applying the unit between positions $i$ and $i+1$:
    \[ 
    F_T(UF_T)^{i} \; \Id \: (UF_T)^{n-i}UX\rightarrow
    F_T(UF_T)^{i} \; UF_T \; (UF_T)^{n-i}UX.
    \]
  \end{hypothenumerate}
  % \end{samepage}
  It is straightforward to check the simplicial identities from the
  monad structure on $T=UF_T$ and the $T$-algebra structure on $X$.
  
}}
\newcommand{\completeblockV}{\completeblock[V]{
\begin{defn*}\label{def:geometric-realization}
  Let $\cC$ be a cocomplete simplicial category. If $X_\bullet$ is a
  simplicial object in $\cC$, the \emph{geometric realization} of
  $X_\bullet$ is the following coequalizer:
  \begin{align*}
    |X_\bullet|_\cC &= \int^{[n]\in \Delta} X_{n}\otimes \Delta^n\\
    &= \coeq \left(
      \coprod_{[k]\rightarrow [n]}X_n\otimes
      \Delta^k\rightrightarrows \coprod_{[n]} X_n\otimes \Delta^n
    \right)
  \end{align*}
  where the two maps in the coequalizer diagram are given by applying
  $X_{\bullet}$  and $\Delta^{\bullet} $ respectively to each morphism
  $[k]\rightarrow [n]$. 
  
  The $m$-skeleton of $|X_{\bul}|$ is defined to be the analogous
  coequalizer
  \begin{align*}
    \mathrm{sk}_m|X_\bullet|_\cC &= \int^{[n]\in \Delta_{\leq m}} X_{n}\otimes \Delta^n\\
    &= \coeq \left(
      \coprod_{\genfrac{}{}{0pt}{}{[k]\rightarrow [n]}{ n\leq m}}X_n\otimes
      \Delta^k\rightrightarrows \coprod_{\genfrac{}{}{0pt}{}{[n]}{n\leq m}} X_n\otimes \Delta^n
    \right)
  \end{align*}
  where $\Delta_{\leq m}$ is the full subcategory of $\Delta$ containing
  the objects $[0],[1],\ldots,[m]$.
\end{defn*}

}}
\newcommand{\completeblockW}{\completeblock[W]{
\saveblockI{}
To show that $T$ commutes with geometric
realization we use the fact that
geometric realization commutes with finite products and Fubini's theorem
for iterated coends as follows:
\begin{align*}
  T|UX_{\bul}|_{\cC} 
  &= \int^{\underline{m}\in \FinSet} |UX_{\bul}|_{\cC}^{m} 
  \otimes \cT(i(\underline{m}),i(\underline{1}))\\
  &\cong\int^{\underline{m}\in \FinSet}
  |UX_{\bul}^{m}|_{\cC}
  \otimes \cT(i(\underline{m}),i(\underline{1}))\\
  &= \int^{\underline{m}\in \FinSet} \left(\int^{n\in \Delta}
  (UX_n^{m}\otimes \Delta^n)\right)\otimes 
  \cT(i(\underline{m}),i(\underline{1}))\\ 
  &\cong \int^{\Delta} \left( \int^{\underline{m}\in
    \FinSet} UX_n^{m}\otimes \cT(i(\underline{m}),i(\underline{1}))\right)
  \otimes \Delta^n\\
  &= |TUX_{\bul}|_{\cC}.
\end{align*}

Note that the commutation of products with geometric realization
plays a key role in the above result. This is easy to verify in the
case where $\cC$ is simplicial sets, since the geometric realization
of a bisimplicial set is isomorphic to its diagonal. The result is
non-trivial but still true in the case of compactly generated weak
Hausdorff spaces. From these cases one can deduce that the smash
product on simplicial objects in pointed simplicial sets, compactly
generated pointed spaces, or categories of spectra built from these
categories also commutes with geometric realization. As a
consequence similarly defined monads will also commute with
geometric realization.

}}
\newcommand{\longsaveblockY}{\longsaveblock[Y]{
\begin{lem*}
  The morphisms of $\Delta_0$ are generated by all of the surjections
  $s_i : [n] \to [n-1]$ for $0 \leq i \leq n-1$, and by the injections
  $d_i : [n] \to [n+1]$ for $0 < i \leq n+1$ and for all $n$.
\end{lem*}
\begin{proof}
  Certainly $s_i(0) = 0$ for all $i$, and $d_i(0) = 0$ when $i \neq
  0$, so $\Delta_0$ contains these morphisms.  Conversely, suppose
  $\tau \in \Delta_0$ so $\tau(0) = 0$.  Let $\tau_+$ and $\tau_-$ be
  the uniquely determined maps in $\Delta$ such that $\tau_+$ is an
  injection, $\tau_-$ is a surjection, and $\tau = \tau_+ \tau_-$.
  Note that $\tau_- \in \Delta_0$ because $\Delta_0$ contains every
  surjection of $\Delta$.  Since $\tau(0) = 0$, we must have
  $\tau_+(0) = 0$ and hence we can factor $\tau_+$ as a sequence of
  coface maps $d_i$ with $i > 0$.
\end{proof}

Note that since $\Delta_0$ is obtained from $\Delta$ by omitting the
surjections dual to the $d_0$ maps, the Reedy structure on $\Delta$
restricts to a Reedy structure on $\Delta_0$. Hence
$\cC^{\Delta_0^{\op}}$ has a Reedy model structure.

\begin{lem*}
  The category $\Delta_0$ is Reedy.
\end{lem*}
\begin{proof}
  This is immediate from the Reedy structure of $\Delta$:  The \emph{degree}
  of an object $[n]$ is its maximal element, $n$, the \emph{direct} morphisms
  are the injections and the \emph{inverse} morphisms are the
  surjections.  The only morphisms which are both direct and inverse
  are the identities.  Just as in $\Delta$, every morphism in $A$ factors
  uniquely as a surjection followed by an injection.
\end{proof}

}}
\newcommand{\completeblockZ}{\completeblock[Z]{
\saveblockK{}
Let $X$ be a simplicial object or, more generally, a
$\Delta_{\mathrm{surj.}}^{\op}$-shaped diagram.  We define the partial
latching object for $0 \leq i < n$:
\begin{equation*}
L_n^i X_\bul = \coeq \left( 
\xymat@C=4pc{\displaystyle 
  \coprod_{\genfrac{}{}{0pt}{}{s_ks_j \colon [n] \twoheadrightarrow [n-2]}{k < j \leq i}} 
  X_{n-2} \ar[r]<1ex>^-{\coprod \text{incl}_k \circ s_{j-1}}
  \ar[r]<-1ex>_-{\coprod \text{incl}_j \circ s_{k}}
  & \displaystyle
  \coprod_{\genfrac{}{}{0pt}{}{s_j \colon [n] \twoheadrightarrow [n-1]}{j \leq i}}
  X_{n-1} } \right)
\end{equation*}
where $\text{incl}_i$ denotes inclusion to the $i$th term in
$\coprod_j X_{n-1}$.  The maps in this coequalizer encode the
simplicial identities $s_j s_k = s_k s_{j-1}$ for $k < j$.  For
example, $L^3_5 X_\bul$ is shown below:
\begin{figure}
  \begin{center}
    \includegraphics[scale=1]{old/latching-diagram}
  \end{center}
    \caption*{Cocone diagram for the partial latching object $L_5^3$.}
\end{figure}
These are special cases of the generalized latching objects discussed
in \cite[\S~VII.1]{GoJ99}.  Note that the full latching object $L_n
X_\bul$ is given by $i = n-1$ and each $L^i_n X_\bul$ is a pushout
\[\xymatrix{
L^{i-1}_{n-1} X_\bul \ar[r] \ar[d] & L^{i-1}_n X_\bul \ar[d] \\
X_{n-1} \ar[r] & L^i_n X_\bul
}\]
where the horizontal map out of $L^{i-1}_{n-1}$ is given by
\[
\coprod_{j \leq i-1} (s_{i-1} \cn X_{n-2} \to X_{n-1}),
\] 
the vertical map out of $L^{i-1}_{n-1}$ is given by 
\[
\coprod_{j \leq i-1} s_j \cn \coprod_{j \leq i-1} X_{n-2} \to X_{n-1},
\]
the right vertical map is given by an inclusion of
diagrams, and the bottom horizontal map is given by inclusion of
$X_{n-1}$ into $\coprod_{j \leq i} X_{n-1}$ at the position indexed by
$i$.

}}
\newcommand{\completeblockZA}{\completeblock[ZA]{
\saveblockL{}
% %%
\begin{lem*}
  \label{lem:partial-latching-subcell-inclusions}
  Let $I$ be a set of maps and $X_\bul$ a simplicial object in
  $\cC$.  Suppose that relative cell complexes in $\cC$ are
  monomorphisms. If each of the degeneracies $s_i$ is an $I$-cellular
  inclusion, then the latching maps of $X_\bul$ are $I$-cellular
  inclusions.
\end{lem*}
\begin{proof}
  Let $s^i X_n$ be the subcell complex which is the union of the cells
  for $X_{n-1}$ coming from the different inclusions $s_0,\ldots,s_i$
  and well defined by \cite[Prop.~10.6.10]{Hir03}. 

  Then by induction $s^i X_n$ fits into the following commutative square of
  subcomplexes of $X_n$:
  \begin{equation*}\label{eqn:pushout-square}\xymatrix{
    s^{i-1} X_{n-1} \ar[d] \ar[r] & s^{i-1} X_{n} \ar[d] \\
    X_{n-1} \ar[r] & s^i X_n
  }\end{equation*}
   We will show that this square is a pullback square and that the
   pullback models the combinatorial intersection of cells. By
   \cite[Prop.~12.2.3]{Hir03} this pullback square will also be a pushout
   square and hence $s^i X_n$ is the $i$th partial latching object and a
   subcomplex of $X_n$.

 To show \eqref{eqn:pushout-square} is pullback we consider the
 induced map from $s^{i-1}X_{n-1}$ to the actual pullback. Since the
 top horizontal map admits a retraction which is induced by $d_{i+1}$
 we obtain a map back from the $s^{i-1}X_{n-1}$. Using this map we
 verify $s^{i-1}X_{n-1}$ satisfies the pullback property. Suppose we
 had two maps to $s^{i-1}X_{n-1}$ that become equal after passing to
 either the $s^{i-1}X_n$ or $X_n$ then then they are equal because the
 top horizontal and left vertical maps are subcell inclusions and
 hence monomorphisms. It follows that \eqref{eqn:pushout-square} is a
 pullback square of subcell complexes. 

 To see $s^{i-1}X_{n-1}$ models the combinatorial inclusion, one
 applies transfinite induction as in \cite[Prop.~10.6.11]{Hir03}. The
 crucial point is that if we have a cell landing in $X_\beta$ which
 lies in the intersection of the two complexes it must land in
 $s^{i-1}X_{n-1}$ by the pullback property. Since
 \eqref{eqn:pushout-square} is a square of subcell complexes this map
 must land in the $\beta$ level of the filtration on
 $s^{i-1}X_{n-1}$.

  \saveblockM{}
  \ifshowsaveblocks

  Now each of the maps
  \[
  s_0X_n = s^0 X_n \to \cdots \to s^{n-2} X_n \to s^{n-1} X_n
  \]
  is a cellular inclusion.
  
  \[\xymatrix{
    s^{i-1}  X_{n-1} \ar[d] \ar[r] 
    & s^{i-1} X_n \cap s_i X_{n-1} \ar[d] \ar[r]
    & s^{i-1} X_n \ar[d]\\
    X_{n-1} \ar[r] 
    & s_{i} X_{n-1} \ar[r]
    & s^i X_n
  }\]
  where the intersection term is the subcellular object indexed by the
  intersection of the two indicated indexing sets in $X_n$.  The two
  left-most horizontal maps are isomorphisms induced by $s_i$ with
  isomorphisms induced by $d_i$.
  \fi
  \end{proof}

}}
\begin{document}

%%
%% Stuff for elsarticle class
%%
\begin{frontmatter}
  \title{Lifting homotopy $T$-algebra maps to strict maps}
  \date{June 30, 2014} %doesn't affect date in elsarticle

  \author{Niles Johnson}
  \address{
    The Ohio State University Newark;
    1179 University Drive;
    Newark, OH 43055;
    USA
  }
  \ead{\nilesemail}
  \ead[url]{\nilesurl}

  \author{Justin Noel\corref{CA}}
    \cortext[CA]{Corresponding author}
  \address{
    Universität Regensburg; 
    Universitätsstr. 31;
    Regensburg D-93040;
    Germany 
  }
  \ead{\justinemail}
  \ead[url]{\justinurl}
  \begin{abstract}
     The settings for homotopical algebra---categories such as simplicial
groups, simplicial rings, $A_\infty$ spaces, $E_\infty$ ring spectra,
etc.---are often equivalent to categories of algebras over
some monad or triple $T$. In such cases, $T$ is acting on a nice
simplicial model category in such a way that $T$ descends to a
monad on the homotopy category and defines a category of
\emph{homotopy} $T$-algebras. In this setting there is a forgetful
functor from the homotopy category of $T$-algebras to the category of
homotopy $T$-algebras.

Under suitable hypotheses we provide an obstruction theory, in the form of a
Bousfield-Kan spectral sequence, for lifting a homotopy $T$-algebra map to a
strict map of $T$-algebras. Once we have a map of $T$-algebras to serve as a
basepoint, the spectral sequence computes the homotopy groups of the space of
$T$-algebra maps and the edge homomorphism on $\pi_0$ is the aforementioned
forgetful functor. We discuss a variety of settings in which the required
hypotheses are satisfied, including monads arising from algebraic theories and
operads. We also give sufficient conditions for the $E_2$-term to be
calculable in terms of Quillen cohomology groups.

We provide worked examples in $G$-spaces, $G$-spectra, rational $E_\infty$ algebras,
and $A_\infty$ algebras. Explicit calculations, connected to rational unstable homotopy theory,
show that the forgetful functor from the homotopy category of $E_\infty$
ring spectra to the category of $H_\infty$ ring spectra is generally neither
full nor faithful.  We also apply a result of the second named author and Nick
Kuhn to compute the homotopy type of the space $\ei(\Sigma^\infty_+ \Coker J,
L_{K(2)} R)$.

  \end{abstract}

  \begin{keyword}
    % MSC 2010 
    \MSC[2010]{Primary: 55P99, 55S35; Secondary: 
      13D03,  
      18C15, 
      18C10, 
      18G55,
      55P43, 
      55P62, 
      55T05, 
      55Q50}
  \end{keyword}
\end{frontmatter}

\maketitle
\tableofcontents

% Main Body

\completeblockZERO{}
\saveblockZERO{}

%Placed here to ensure logical naming order in the bibliography, all
% references are cited.
\nocite{Bor94,Bor94a}
\section{Introduction}

In the work of Ando, Hopkins, Rezk, and Strickland on the Witten genus
\cite{AHS01,AHS04,AHR06} the authors first construct a lift of the
Witten genus to a multiplicative map of cohomology theories, then to
an $H_\infty$ map (i.e., a map preserving power operations), and
finally to an $E_\infty$ map
\[
  M\mathit{String}\rightarrow \mathit{tmf}.
\]
In each of these steps they are asking that a map respects additional
structure and it is natural to ask if there are general techniques for
constructing such liftings.

Their construction of an $H_\infty$ map makes use of ideas from Ando's
thesis \cite{And92,And95}, where he defines $H_\infty$ maps from
complex cobordism to Lubin-Tate spectra using a connection to
isogenies of Lubin-Tate formal group laws. Their lift then arises from
a computation: Since the $H_\infty$ condition can be formulated in the
stable homotopy category, a map is $H_\infty$ if and only if an
associated sequence of cohomological equations hold. The applicability
of such techniques is one of the reasons that the category of
$H_\infty$ ring spectra is computationally more
accessible. Although every $E_\infty$ map forgets to an $H_\infty$ map,
constructing $E_\infty$ maps is much more subtle and requires rather
different techniques.

We construct an obstruction-theoretic spectral sequence to detect when
an $H_\infty$ map can be lifted to an $E_\infty$ map and other
problems of this type. As a consequence of our approach we can also
see how much information is lost under the passage from
$E_\infty$ to $H_\infty$ ring spectra. The first category can be
described as the category of algebras over a monad/triple $T$
in a category of spectra while the second is the category of such
algebras in the homotopy category. Phrased in these terms, it is
expected that a great deal is forgotten in the passage from $E_\infty$
to $H_\infty$ ring spectra. But to date, there have been no examples
demonstrating this. Since our methods apply more generally to studying
categories of algebras over a monad $T$ (satisfying some hypotheses),
we set up our machinery in the more abstract setting.

In \Cref{sec:t-algebras} we provide a rapid review of the theory of
monads and how they naturally encode algebraic structures. We
emphasize the examples coming from algebraic theories and from operads
since they make up the majority of our examples. In
\Cref{sec:model-structures-on-t-algebras}, we recall some conditions
which guarantee the existence of a simplicial model structure on the
category of algebras over a monad. These conditions are often
satisfied and cover a broad range of standard examples. We include
this standard material so the reader can easily apply it to the
application of their choosing.

Our first main result is:
\begin{thmA}
  \begin{samepage}
    Let $\cC$ be a simplicial model category and $T$ a
    simplicial Quillen monad (\Cref{def:quillen}) acting on $\cC$.
    Let $X$ and $Y$ be $T$-algebras.  Suppose that
    \begin{hypothenumerate}
      \item $T$ commutes with geometric realization and
      \item $X$ is resolvable with bar cofibrant replacement $\widetilde{X} \to X$
      (\Cref{def:resolvable}).
    \end{hypothenumerate}
  \end{samepage}
  Let $U\colon \cC_T\rightarrow \cC$ denote the forgetful functor from
  the category of $T$-algebras to $\cC$.  Then $T$ induces a monad $\h
  T$ on $\ho\cC$ and there
  exists an obstruction-theoretic spectral sequence, called the
  $T$-algebra spectral sequence, such that:
  \begin{enumerate} \setlength{\itemsep}{.4pc}
    \completeblockA{}%
    \item Provided a $T$-algebra map $\epz \cn X \to Y$ to serve as a
      base point, the spectral sequence conditionally converges to the
      homotopy of the derived mapping space
      \[
      \pi^s \pi_t \cC^d(T^{\bul}U\widetilde{X}, UY)\Longrightarrow \pi_{t-s}\cC_T^d(X,Y).
      \]
     \item In this case the differentials $d_r[f]$ provide obstructions to
       lifting $[f]$ to a map of $T$-algebras.
     \item The edge homomorphisms 
       \begin{align*}
	 \pi_0 \cC_T^d(X,Y)
	 &\twoheadrightarrow E_\infty^{0,0}\\
	 &\hookrightarrow E_2^{0,0} = (\ho\cC)_{\h T}(UX, UY)\\
	 &\hookrightarrow E_1^{0,0} = \ho\cC (UX, UY)
       \end{align*}
       are the corresponding forgetful functors.
       \item   If $\cC_T$ has functorial bar cofibrant and fibrant replacements, then the spectral sequence is contravariantly functorial in $X$ and covariantly functorial in $Y$.
       \item   A map of simplicial monads $T_1\rightarrow T_2$ satisfying the above hypotheses induces a contravariant map of spectral sequences provided that $X$ has a bar cofibrant replacement $\widetilde{X} \to X$ in $\cC_{T_2}$ such that $U_3 \widetilde{X}$ has a bar cofibrant replacement in $\cC_{T_1}$.
       \end{enumerate}
\end{thmA}
This result will be proven in \Cref{sec:main-theorem}. Note that since we
avoid using $E_2$ model structures or Bousfield localizations, we do
not require any properness assumptions on our model category. The
assumption that our monad is simplicial Quillen is innocuous and satisfied in 
practice. The remaining two assumptions guarantee convergence to the
desired target and allow us to identify the key terms in the spectral
sequence. 

Note that the convergence result in  \refthmA is stronger than that of
alternative approaches found in the literature, e.g., using a Reedy
cofibrant replacement of the bar resolution or taking homotopy colimits
instead of geometric realization.  These approaches give spectral
sequences which converge to mapping spaces from a $T$-cocompletion of
the source, as in \cite{Hes10,BR12}. We combine some standard results
recalled in \Cref{sec:simplicial-resolutions} with some crucial 
technical lemmas in \Cref{sec:reedy} to prove convergence without a
cocompletion under the assumptions of \refthmA.

In \Cref{sec:verification-of-examples} we show that these assumptions
hold in many general cases of interest such as nice categories of
algebras over an operad, $G$-spaces and $G$-spectra (provided $G$ is
sufficiently nice), and many algebraic categories such as simplicial
groups and rings.  In each of these examples, the resolvability conditions hold for every object, so the spectral sequence can be applied to any pair of objects in the category. 

This spectral sequence is a special case of the Bousfield-Kan spectral sequence. Bousfield has shown that this spectral sequence can be applied even without the existence of a base point---a useful generalization
since a space of $T$-algebra maps may well be empty. In this case
there is an obstruction theory (see \Cref{rem:obstructions}) for
lifting a map in $\cC$ to a map of $T$-algebras so that one can obtain
a base point \cite[\S~5]{Bou89}. The farther one can lift this base
point up the totalization tower, the greater the range in which one
can define the spectral sequence and differentials.

As shown in \Cref{thm:obstructions}, when the relevant mapping spaces in
$\cC$ have the homotopy type of $H$-spaces, e.g., if $\cC=\Spectra$,
then one can choose these obstructions to land in the $E_2$ page of the
spectral sequence. Under favorable circumstances we can then apply our second
main theorem, \refthmB of \Cref{sec:methodology}, to identify the
$E_2$ term with Quillen cohomology groups.

We then demonstrate the wide applicability of this spectral sequence and its
computability via \refthmB through a number of examples in \Cref{sec:computations}.
The reader interested in applications is encouraged to skip directly to
this section where we compute the homotopy groups of particular:

\begin{itemize}
\item Spaces of equivariant maps in $G$-spaces and $G$-spectra
  (\Cref{sec:g-action}).  This is a warm-up for the other examples.
  In two examples, we explicitly analyze the forgetful functor from the homotopy
  category of (strict) $G$-objects to (weak) $G$-objects in the
  homotopy category of spaces or spectra.

\item Spaces of $E_{\infty}$ maps between function spectra
  (\Cref{sec:examples-operads}).  In two examples arising from unstable rational
  homotopy theory, we show that the forgetful functor from $\ei$ to $\hi$
  ring spectra is generally neither full nor faithful.  To the authors'
  knowledge, these are the first such examples.
  
\item  Spaces of $A_\infty$ and $E_{\infty}$ self-maps of 
$Hk$-algebras, for $k$ a suitable field, whose homotopy rings are
polynomial algebras.

\item Spaces of $\ei$ maps from $\Sigma^{\infty}_+\Coker J$ to a
  $K(2)$-local $\ei$ ring spectrum (\Cref{sec:coker-j}).  This is a
  result of Nick Kuhn and the second named author, and gives
  a nontrivial example of when the set of $\hi$ maps coincides with the
  set of homotopy classes of $\ei$ maps.  As a consequence of the
  proof we obtain new information about $\Coker J$, including its
  $K(2)$-homology:
  \[
  K(2)_*\Coker J \cong \bigoplus_{n\geq 0}K(2)_*B\Sigma_n. 
  \]
\end{itemize}

\subsection*{Related work} 
The $T$-algebra spectral sequence arises by taking a functorial
resolution of the source $X$. Namely we replace $X$ by the two sided
bar construction $B(F_T,T,UX)$ where $U$ is the forgetful functor
$\cC_T\rightarrow \cC$ and $F_T$ is its left adjoint. For this
approach, one wants general conditions under which the replacement is
cofibrant, weakly equivalent to $X$, and equipped with a suitable
filtration for obtaining a spectral sequence.  A number of special
cases of this theory are well known, and the arguments for spaces and
spectra can be found in the literature.  Although the two-sided bar
construction has been a standard tool in homotopy theory for decades,
we know of no reference in which its homotopical properties
are developed with sufficient breadth for our purposes.  In 
\Cref{sec:reedy} we develop new tools for
this purpose and apply them in \Cref{sec:verification-of-examples} to
demonstrate the applicability of the $T$-algebra spectral sequence. 

There are a couple of alternative methods for constructing maps of
structured ring spectra. This work can be considered an extension of the
obstruction theory for maps of algebras in simplicial $R$-modules and
$A_\infty$ ring spectra that appears in Rezk's thesis \cite{Rez96} and
his presentation of the Hopkins-Miller theorem \cite{Rez97} respectively. Indeed
the latter work was a significant source of inspiration for this
project. Angeltveit \cite{Ang08a} has also constructed an obstruction
theory, which appears to be part of a spectral sequence, for computing
maps of $A_\infty$ ring spectra. 

The Goerss-Hopkins spectral sequence also computes the homotopy of the
derived mapping space between two spectra which are algebras over a
suitable operad, such as an $E_\infty$ operad \cite{GoH04,
  GoH05}. This spectral sequence uses an $E_2$ model structure
which guarantees an algebraic description of the $E_2$ term and is,
in general, \emph{not} the same as the $T$-algebra spectral sequence. In
particular, their edge homomorphism is generally a Hurewicz homomorphism
which usually is distinct from the forgetful functor above. In the
sequel \cite{Noe14} however, the second named author shows that in
special cases, such as the worked examples in \Cref{sec:examples-operads},
the spectral sequences do agree and computations can be done in either
framework.

When \refthmB does not apply, it is generally quite
difficult to determine the $E_2$ term of the $T$-algebra spectral
sequence. Indeed, when $T$ is the monad associated to the $E_{\infty}$
operad, the main results of \cite{AHS04,And95,JoN09b} could be
expressed as partial computations of $d_1\colon E_{1}^{0,0}\rightarrow
E_1^{1,0}$.  The difficulties here are generic: there are very few
examples where \refthmB does not apply yet one still has enough knowledge of the power operations to
compute the $E_2$ term explicitly.

\subsection*{Acknowledgements}
This work began when the authors were visiting the Max Planck
Institute for Mathematics. The authors would like to thank the MPIM
for its hospitality during their stay and the anonymous referee for
their careful reading of this paper and their many helpful comments.

The second named author would also like to thank Benoit Fress, Paul
Goerss, Stefan Schwede, Karol Szumilo, and Markus Szymik for helpful
conversations concerning the material below. The authors also
gratefully acknowledge partial support from the University of Bonn and
the Deutsche Forschungsgemeinschaft through Graduiertenkolleg 1150, as
well as the University of Georgia through VIGRE II.

\subsection*{Conventions/Terminology}

We will make the convention that a simplicial category is a simplicially
enriched category which is tensored and cotensored over simplicial sets.
This convention is standard when discussing simplicial model categories,
but unusual in enriched category theory. 

\completeblockB{}

\addtocounter{tocdepth}{1}
\section{Algebras over a monad}\label{sec:t-algebras}

This section reviews monads and their categories of algebras, focusing
on examples and conditions which ensure that limits and colimits in the
categories of algebras exist. The existence of these constructions is
not automatic, but will be essential for the material in
\Cref{sec:model-structures}.  We also show how these constructions are
computed in practice.

In \Cref{sec:monads} we begin with a familiar example, focusing on
points which are key to the general theory. A wealth of additional
examples can be found in the framework of algebraic theories which we
recall in \Cref{sec:algebraic-theories}. In \Cref{sec:simplicial-cat}
we extend this discussion to the simplicially enriched
context. Finally we recall some relevant facts about operads in 
\Cref{sec:monads-from-operads}. In these last two sections we introduce
two of our primary classes of examples: Simplicial algebraic theories
and operads.

\subsection{Monadicity and categories of algebras}\label{sec:monads}
Given a set $S$ we can form the free group $FS$ on $S$ whose
underlying set consists of all finite reduced words whose letters are
signed elements of $S$. 
Multiplication is then defined by composing words. We can also take a group
$G$, forget its group structure, and regard it is as a set $X=UG$. These
constructions are clearly functorial and participate in an adjunction 
\begin{equation*}
   \xymatrix{
   \Group\ar@<0.5ex>[r]^-{U}  & \Set\ar@<0.5ex>[l]^-{F}
 }
\end{equation*}
where $U$ is right adjoint to $F$. Let $T=UF$ denote
the endofunctor of $\Set$ given by the composite of these two functors.

The unit of this adjunction is a natural transformation
$e\colon \Id\rightarrow T$ given by taking an element of a
set to its associated word of length one. Using the underlying
group structure on $X=UG$ one can multiply the elements in a word to obtain
a \emph{structure map} 
\[
  \mu_{X}\colon TX\rightarrow X.
\] 
Alternatively we could construct this map by applying $U$ to the counit
\[
  \varepsilon\colon FU\rightarrow \Id
\] 
of this adjunction. In particular, we have such a map for anything in the
image of $T$ and obtain a natural transformation 
\[
  \mu_T\colon T^2\rightarrow T.
\] 
The (large) category of
endofunctors of $\Set$ admits a monoidal structure under composition and
we can see that $(T, e, \mu_T)$ is an associative monoid in this category,
in other words, $T$ is a \emph{monad} on $\Set$. 

In the case of $X=UG$ we see that the map $\mu_X$ is compatible with this
structure in the sense that the two double composites of straight
arrows in \eqref{eqn:resolution-of-t-algebras} are equal and each
composite of a curved
arrow followed by a straight arrow is the identity morphism.
\begin{equation}\label{eqn:resolution-of-t-algebras}
  \xymatrix{
  TTX\ar@<1ex>[rr]^{\mu_T}\ar@<-1ex>[rr]_{T\mu_X} & &
  TX\ar[rr]^{\mu_X}\ar@/_1.5pc/[ll]_{e_{TX}} & & X\ar@/_1.5pc/[ll]_{e_X}\\
  }
\end{equation}
An object $X\in \Set$ with a map $\mu_X\colon TX\rightarrow X$
satisfying these identities is called a $T$-algebra in $\Set$. We obtain a category
$\Set_T$ of $T$-algebras in $\Set$ by restricting to those set maps
which commute with the structure morphisms. To be explicit, the
morphisms between two $T$-algebras $(X, \mu_X)$ and $(Y, \mu_Y)$ are those
maps $f\colon X\rightarrow Y$ such that the following diagram commutes:
\[\xymatrix{
  TX\ar[r]^{Tf}\ar[d]_{\mu_X} & TY\ar[d]^{\mu_Y}\\
  X\ar[r]^{f} & Y
}\]
or alternatively: 
\begin{equation}\label{eqn:maps-of-t-algebras}
  \xymatrix@C=11ex{
 \Set_T\,(X,Y)= \eq\big[ \Set(X,Y)\ar@<1ex>[r]^-{(\mu_Y)_*\circ
 T}\ar@<-1ex>[r]_-{\mu_X^*}  & \Set(TX,Y) \big].\\
}
\end{equation} 

The category of $T$-algebras in $\Set$ admits an obvious forgetful
functor to $\Set$ and we saw above that the forgetful functor $U\colon
\Group\rightarrow \Set$ factors through $\Set_T$. It is not difficult to
see that the latter functor defines an equivalence of categories
$\Group\simeq \Set_T$.  Indeed, if $G$ is a group then we can see that
some of the maps in \eqref{eqn:resolution-of-t-algebras} can be realized
by applying $U$ to following diagram of groups:
\begin{equation}\label{eqn:resolution-of-t-algebras-above}
  \xymatrix@C=11ex{
  FTUG\ar@<1ex>[r]^-{\varepsilon FU}\ar@<-1ex>[r]_-{F\mu_{UG}} &
  FUG\ar[r]\ar@/_1.5pc/[l]_-{e} & G.\\
  }
\end{equation}
The map on the right exhibits $G$ as the coequalizer of the two
straight arrows on the left.  Moreover, the map $e$ exhibits this
coequalizer as a reflexive coequalizer. In this sense we see
that every group has a \emph{functorial resolution} by free
groups. The forgetful functor from $\Set_T$ to $\Set$ admits a left
adjoint $F_T$ which factors $T$ as $T = UF_T$.  Similarly, we see that
every $T$-algebra admits a functorial resolution by free $T$-algebras.
After forgetting down to $\Set$ these coequalizer diagrams become
split coequalizer diagrams \cite[Lem.~4.3.3]{Bor94a}, i.e.,
diagrams of the form \eqref{eqn:resolution-of-t-algebras}. Split
coequalizer diagrams have the useful property that they are preserved by
\emph{all} functors \cite[Prop.~2.10.2]{Bor94}.

Using these functorial resolutions and that a morphism of groups is an
isomorphism if and only if it induces an isomorphism between the
underlying sets, we can see that the lifted functor $U\colon \Group
\rightarrow \Set_{T}$ is essentially surjective. By applying the
functorial resolution again and \eqref{eqn:maps-of-t-algebras} one can
now see that this functor is full and faithful and $\Group \simeq \Set_T$.

These arguments are completely general: 
\begin{thm}[Barr-Beck/Monadicity]\label{thm:monadicity}
  Any functor $U\colon\cD\rightarrow \cC$ which admits a left adjoint
  $F$ lifts to a functor to the category of $T=UF$-algebras in $\cC$.
  Moreover this functor is an equivalence of categories if and only if
  \begin{hypothenumerate}
  \item \label{it:barr-beck-a} $U$ is conservative, i.e., a map
    $f$ in $\cD$ is an isomorphism if and only if $Uf$ is.
  \item \label{it:barr-beck-b} For every $T$-algebra $G$, if $U$ takes a pair of arrows of the
    form \eqref{eqn:resolution-of-t-algebras-above} to a split coequalizer, then the pair
    of arrows in \eqref{eqn:resolution-of-t-algebras-above} admits a
    coequalizer which is preserved by $U$.
  \end{hypothenumerate}
\end{thm}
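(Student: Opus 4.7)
The plan is to first construct the comparison functor $K\cn \cD \to \cC_T$ from the adjunction data, verify necessity by inspecting how conditions (a) and (b) pull back through $K$, and then prove sufficiency by building a quasi-inverse via the canonical free resolution of each $T$-algebra. For the construction of $K$: given $D \in \cD$, set $KD = (UD, U\epsilon_D)$ where $\epsilon\cn FU \to \Id_\cD$ is the counit; the $T$-algebra axioms for $U\epsilon_D$ are immediate from naturality of $\epsilon$ and the triangle identities. On morphisms, $K$ sends $g\cn D \to D'$ to $Ug$, so $U = U_T K$ where $U_T\cn \cC_T \to \cC$ is the forgetful functor. Necessity is straightforward: the forgetful functor $U_T$ manifestly reflects isomorphisms (the inverse of an underlying map of algebras automatically commutes with the structure maps) and creates coequalizers of $U_T$-split reflexive pairs (split coequalizers are absolute, and the splittings transfer the $T$-algebra structure uniquely to the coequalizer). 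Since $U = U_T K$, if $K$ is an equivalence then (a) and (b) follow.

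For sufficiency, I will construct a candidate inverse $L\cn \cC_T \to \cD$. Given a $T$-algebra $(X, \mu_X)$, consider the reflexive pair
\[
F\mu_X, \, \epsilon_{FX} \cn FTX \rightrightarrows FX
\]
in $\cD$, with common section $F e_X$. Applying $U$ yields the canonical pair $T\mu_X, \mu_T \cn T^2 X \rightrightarrows TX$ of \eqref{eqn:resolution-of-t-algebras}, which forms a split coequalizer with coequalizer $\mu_X\cn TX \to X$ and splittings $e_X$ and $e_{TX}$; this relies on the $T$-algebra unit $\mu_X \circ e_X = 1_X$, the monad unit $\mu_T \circ e_{TX} = 1_{TX}$, and naturality of $e$ applied to $\mu_X$, which gives $T\mu_X \circ e_{TX} = e_X \circ \mu_X$. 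Condition (b) then produces a coequalizer $LX$ in $\cD$ preserved by $U$, and so yields a canonical isomorphism $ULX \cong X$ in $\cC$ that matches the $T$-algebra structure. Functoriality of $L$ follows from the universal property, since any map of $T$-algebras $f\cn X \to Y$ induces a map of the corresponding reflexive pairs in $\cD$.

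It remains to verify that $K$ and $L$ are mutually quasi-inverse. The isomorphism $KL \cong \Id$ on $\cC_T$ is immediate from $ULX \cong X$ together with the observation that the induced $T$-algebra structure on $ULX$ is by construction $\mu_X$. For the isomorphism $LK \cong \Id$ on $\cD$: the object $LKD$ is defined as the coequalizer in $\cD$ of the pair $F(U\epsilon_D), \, \epsilon_{FUD}\cn FUFUD \rightrightarrows FUD$, and the counit $\epsilon_D\cn FUD \to D$ coequalizes this pair by naturality of $\epsilon$, inducing a canonical comparison $LKD \to D$. Applying $U$ identifies this with the coequalizer map of the same $U$-split pair in $\cC$, so $U(LKD \to D)$ is an isomorphism; condition (a) then upgrades this to an isomorphism in $\cD$. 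The main obstacle throughout is the bookkeeping around the triangle identities and the naturality of $e$ and $\epsilon$, required to verify both the split-coequalizer data in $\cC$ and the coequalization relations in $\cD$; once these are in place, the remaining verifications are standard diagram chases.
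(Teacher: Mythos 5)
Your proof is correct and is exactly the standard Beck comparison-functor argument that the paper invokes by citation to Borceux [4.4.4] rather than writing out: construct $K$ from the counit, get necessity from the forgetful functor's creation of coequalizers of split pairs, and get sufficiency by coequalizing the canonical free resolution of each algebra and using (a) to upgrade $LK\cong\Id$ from $\cC$ to $\cD$. Your argument also substantiates the paper's parenthetical remark that its condition (b) is weaker than Borceux's hypothesis yet still suffices, since you only ever apply (b) to the canonical pairs $F\mu_X,\epsilon_{FX}$ and $FU\epsilon_D,\epsilon_{FUD}$.
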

\begin{proof}
  This version of the Barr-Beck theorem is a slight variation of
  \cite[Thm.~4.4.4]{Bor94a}. Here we assume the existence of a left adjoint,
  which does not appear there, and our condition \eqref{it:barr-beck-b} is
  slightly weaker than what is assumed there.  But Borceux's
  argument applies without change.
%% SAVE
  \saveblockA{}
\end{proof}

\Cref{thm:monadicity} can be used to identify many categories as
categories of algebras over a monad. Since we want $\cC_T$ to have an
ample supply of colimits and limits for constructing model structures we
postpone introducing these examples for a moment so that we can record
when such constructions exist. 

\begin{prop}\cite[Props.~4.3.1, 4.3.2]{Bor94a}\label{prop:monads-colimits-limits}
  Suppose $T$ is a monad acting on $\cC$, then
  \begin{enumerate}
	\item\label{enum:limits} The forgetful functor $U\colon
	  \cC_T\rightarrow \cC$ creates all limits. 
	\item\label{enum:colimits} The forgetful functor
	  $U\colon \cC_T\rightarrow \cC$ creates all colimits which
	  commute with $T$ in $\cC$.
  \end{enumerate}
\end{prop}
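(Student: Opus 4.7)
\textbf{Plan for Proposition \ref{prop:monads-colimits-limits}.} Both parts follow the same template: given a diagram $D \cn \cI \to \cC_T$ whose composite $UD$ has a (co)limit in $\cC$, use the universal property to transport the $T$-algebra structure along the (co)limit cone, then verify that the resulting object with its structural (co)cone solves the universal problem in $\cC_T$ and is unique as such.

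For part (1), let $(L, \pi_i)$ be a limit of $UD$ in $\cC$. The maps $\mu_{D(i)} \circ T\pi_i \cn TL \to UD(i)$ assemble into a cone over $UD$: for a morphism $\alpha \cn i \to j$ in $\cI$, the identity $UD(\alpha) \circ \mu_{D(i)} = \mu_{D(j)} \circ TUD(\alpha)$, expressing that each $D(\alpha)$ is a $T$-algebra map, combines with the cone condition on the $\pi_i$ to give the required commutativity. By universality there is a unique $\mu_L \cn TL \to L$ with $\pi_i \circ \mu_L = \mu_{D(i)} \circ T\pi_i$, and this simultaneously makes each $\pi_i$ a $T$-algebra map. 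The unit and associativity axioms for $\mu_L$ are verified by postcomposing with each $\pi_i$ and invoking uniqueness: the corresponding diagrams for $D(i)$ already commute, so the two composites into $UD(i)$ agree for all $i$, whence the two composites into $L$ coincide. A parallel argument shows that any cone over $D$ in $\cC_T$ induces a unique mediating map that is automatically a $T$-algebra map. This simultaneously yields preservation and reflection, hence creation.

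For part (2), let $(C, \iota_i)$ be a colimit of $UD$ in $\cC$ which is preserved by $T$; since composition preserves preservation, $T^2$ also preserves this colimit. The maps $\iota_i \circ \mu_{D(i)} \cn TUD(i) \to C$ form a cocone under $TUD$ because each $D(\alpha)$ is a $T$-algebra map, and by the hypothesis that $TC$ is the colimit of $TUD$ there is a unique $\mu_C \cn TC \to C$ with $\mu_C \circ T\iota_i = \iota_i \circ \mu_{D(i)}$. The unit axiom is checked by precomposing with each $\iota_i$ and using the unit axiom for $D(i)$ together with naturality of $e$. Associativity $\mu_C \circ T\mu_C = \mu_C \circ \mu_{T,C}$ is checked by precomposing with each $T^2\iota_i$ and invoking the uniqueness in the universal property of $T^2 C$ as the colimit of $T^2UD$, the two composites agreeing because $D(i)$ satisfies associativity. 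A final diagram chase, again using that $T$ preserves the colimit, shows that any cocone under $D$ in $\cC_T$ induces a unique mediating map which is a $T$-algebra map.

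The conceptually delicate step, and the one that motivates the hypothesis, is the associativity verification in part (2): it is the only place where one genuinely needs $T$ itself, and not merely $U$, to commute with the colimit, since without the universal property of $T^2C$ as a colimit one cannot conclude equality of the two $TC \to C$ composites from their componentwise agreement. The rest of the argument is a bookkeeping exercise in the universal properties of (co)limits in $\cC$.
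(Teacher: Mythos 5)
Your proof is correct and is the standard argument; the paper itself gives no proof of this proposition, deferring entirely to the cited results of Borceux, whose argument proceeds exactly as yours does (lift the structure map along the universal property of the (co)limit, then verify the algebra axioms and the universal property by composing with the (co)cone legs). One small inaccuracy in your closing commentary: preservation of the colimit by $T$ is needed not only for the associativity check via $T^2C$ but already to construct $\mu_C$ at all (one needs $TC$ to be the colimit of $TUD$) and again to verify that the mediating map out of $C$ is a $T$-algebra map, so associativity is not ``the only place'' where the hypothesis genuinely enters.
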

\begin{prop}\cite[Prop.~II.7.4]{EKMM97}\label{prop:coequalizers-imply-cocomplete}
  Suppose $\cC$ is cocomplete and $T$ commutes with reflexive
  coequalizers, then $\cC_T$ is cocomplete and the forgetful
  functor creates all reflexive coequalizers.
\end{prop}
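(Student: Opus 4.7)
The plan is to deduce this from \Cref{prop:monads-colimits-limits}. Since $T$ commutes with reflexive coequalizers, part (2) of that proposition immediately gives that $U$ creates them; in particular, $\cC_T$ has all reflexive coequalizers, computed in $\cC$ and endowed with the unique compatible $T$-algebra structure. This disposes of the second assertion.

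For cocompleteness, I would use the standard fact that a category is cocomplete as soon as it has all small coproducts and all reflexive coequalizers: for any diagram $D\colon \cI\to \cD$, the colimit is the coequalizer of
\[
\coprod_{(f\colon i\to j)\in\cI} D(i)\ \rightrightarrows\ \coprod_{i\in\cI} D(i),
\]
and the identity morphisms of $\cI$ provide a common section, exhibiting the pair as reflexive. So it suffices to build arbitrary coproducts in $\cC_T$.

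Coproducts of free $T$-algebras exist because the left adjoint $F_T$ preserves colimits, giving $\coprod_\alpha F_T Y_\alpha \cong F_T\bigl(\coprod_\alpha Y_\alpha\bigr)$. For a general family $\{X_\alpha\}$ in $\cC_T$, I would apply the canonical presentation $F_T U F_T U X_\alpha \rightrightarrows F_T U X_\alpha \to X_\alpha$, which is a split coequalizer in $\cC$ as in \eqref{eqn:resolution-of-t-algebras} and hence, by the first paragraph, a reflexive coequalizer in $\cC_T$. Since both $F_T U X_\alpha$ and $F_T U F_T U X_\alpha$ are free, their coproducts over $\alpha$ exist in $\cC_T$; these assemble into a reflexive pair whose coequalizer is my candidate for $\coprod_\alpha X_\alpha$.

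The only substantive step is to verify this candidate's universal property. By the coequalizer property and the adjunction $F_T \dashv U$, a $T$-algebra map from the candidate to a target $Z$ corresponds to a family of $T$-algebra maps $X_\alpha \to Z$, with no leftover compatibility constraint---this is a direct diagram chase using that each $X_\alpha$ is itself the coequalizer of its canonical presentation. The content of the argument lies entirely in the hypothesis that $T$ preserves reflexive coequalizers, which through \Cref{prop:monads-colimits-limits} furnishes both the canonical presentations as coequalizers in $\cC_T$ and the building blocks for every other colimit.
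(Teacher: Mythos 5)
Your argument is correct and is in substance the paper's own proof: the paper simply records the single formula \eqref{eqn:colimits-of-t-algebras}, which expresses an arbitrary colimit in $\cC_T$ as a reflexive coequalizer of free algebras on colimits computed in $\cC$ --- exactly the content of your construction. Your two-step organization (coproducts plus reflexive coequalizers, with each $X_\alpha$ presented by its canonical reflexive coequalizer of frees) is the standard expanded form of that formula, and the ingredients you invoke --- $U$ creating reflexive coequalizers via \Cref{prop:monads-colimits-limits}\eqref{enum:colimits} and $F_T$ preserving colimits --- are the same ones the paper relies on.
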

\completeblockC{}

Alternatively, if we suppose that $\cC$ is bicomplete and $T$ preserves
$\kappa$-filtered colimits for some regular cardinal $\kappa$ then
$\cC_T$ is bicomplete by \cite[Prop.~4.3.6]{Bor94a}. We often want $T$, or
equivalently $U$, to preserve \emph{both} filtered colimits and
reflexive coequalizers (for some examples where this does not hold
see \cite[\S~4.6]{Bor94a}). In such a case we can apply the following
useful form of the Barr-Beck theorem provided we restrict to
\emph{locally presentable categories} \cite[\S~5.2]{Bor94a}.

\begin{prop}\label{prop:barr-beck-for-presentable-categories}
  Suppose $U\colon \cD\rightarrow \cC$ is a conservative functor between two locally
  presentable categories such that 
  \begin{hypothenumerate}
	\item $U$ preserves limits,
	\item $U$ creates $\kappa$-filtered colimits for some regular
	  cardinal $\kappa$,
	\item and $U$ creates reflexive coequalizers.
  \end{hypothenumerate}
  Then $U$ admits a left adjoint $F$, $\cD$ is equivalent to 
  the category of $T=UF$-algebras in $\cC$, and $T$ commutes with
  reflexive coequalizers and $\kappa$-filtered colimits. 
\end{prop}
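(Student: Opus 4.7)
The plan is to produce the left adjoint first, then verify the hypotheses of the Barr-Beck monadicity theorem \Cref{thm:monadicity}, and finally establish the colimit-preservation properties of $T$. Since $\cD$ is locally presentable and $U$ preserves small limits, a suitable form of the adjoint functor theorem for locally presentable categories will supply a left adjoint $F$ as soon as one knows $U$ is accessible. The hypothesis that $U$ reflects $\kappa$-filtered colimits, coupled with the presentability of both $\cC$ and $\cD$, is what lets us pin down accessibility, possibly after enlarging $\kappa$ to some regular $\kappa' \geq \kappa$. Setting $T := UF$ then yields a monad on $\cC$, with unit and multiplication arising from the unit and counit of $F \dashv U$, together with the comparison functor $K \cn \cD \to \cC_T$.

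To see that $K$ is an equivalence we apply \Cref{thm:monadicity}. Condition (ii), concerning the specific coequalizers of the form \eqref{eqn:resolution-of-t-algebras-above}, is immediate from the hypothesis that $U$ creates reflexive coequalizers: any such pair is reflexive via the unit $e$, and creation supplies both the existence of the coequalizer in $\cD$ and its preservation under $U$. For condition (i), that $U$ reflects isomorphisms, we exhibit any $f$ with $Uf$ invertible as the coequalizer of a reflexive pair whose $U$-image coequalizes to an isomorphism in $\cC$, and then invoke creation of reflexive coequalizers to force $f$ to be invertible in $\cD$.

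For the remaining claim that $T = UF$ commutes with reflexive coequalizers and $\kappa$-filtered colimits: $F$ preserves all colimits since it is a left adjoint, and $U$ preserves reflexive coequalizers since it creates them. Preservation of $\kappa$-filtered colimits by $U$ (possibly at the enlarged cardinal) follows from the accessibility extracted in the first step, so the composite $T$ preserves both classes of colimits.

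The main obstacle lies in the careful handling of reflection versus preservation of $\kappa$-filtered colimits. The stated hypothesis only gives reflection, whereas the adjoint functor theorem and the downstream accessibility statements for $T$ require preservation. Bridging this gap — by exploiting the fact that every object of $\cD$ is a suitably filtered colimit of $\kappa$-presentables, tracking the image of these diagrams under $U$, and possibly enlarging $\kappa$ in the process — is the technical heart of the argument and is what underwrites both the existence of $F$ and the final accessibility claim for $T$.
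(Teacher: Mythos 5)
Your overall route---the adjoint functor theorem for locally presentable categories to produce $F$, then \Cref{thm:monadicity} with both of its hypotheses extracted from creation of reflexive coequalizers, then left-adjointness of $F$ together with the preservation properties of $U$ for the colimit claims about $T$---is exactly the paper's (three-sentence) proof, and your verification of the two Barr--Beck conditions is correct and more detailed than the paper's: the pairs in \eqref{eqn:resolution-of-t-algebras-above} are reflexive via the unit, and reflection of isomorphisms follows by exhibiting any $f$ with $Uf$ invertible as a coequalizer of the reflexive pair $(\mathrm{id},\mathrm{id})$ and invoking the reflection half of ``creates.''

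The genuine problem is the step you yourself flag as the ``technical heart'': passing from \emph{reflection} of $\kappa$-filtered colimits to \emph{preservation}. That bridge does not exist. Reflection only says that a cocone in $\cD$ whose $U$-image is a colimit is itself a colimit; it gives no control over the $U$-image of a colimit cocone in $\cD$, and writing objects of $\cD$ as filtered colimits of $\kappa$-presentables does not help, because you cannot identify $U$ of such a colimit without already knowing that $U$ preserves it. Nor can preservation be recovered from limit-preservation plus presentability alone: the assertion that every limit-preserving functor between locally presentable categories is a right adjoint is tied to Vop\v{e}nka's principle, so some genuine accessibility (preservation) hypothesis is indispensable for \cite[5.5.7]{Bor94a}. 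The resolution is that condition (ii) is to be read as \emph{preservation} of $\kappa$-filtered colimits: that is what the cited adjoint functor theorem requires, what the paper's proof silently uses (``the first two conditions guarantee that $U$ admits a left adjoint''), and what is actually verified in every application of the proposition in the paper (preservation, indeed creation, of sifted colimits). With that reading your argument closes up and coincides with the paper's; as written, both the existence of $F$ and your final accessibility claim for $T$ rest on a step that cannot be carried out.
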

\completeblockD{}

The above results illustrate the importance of reflexive coequalizers
and filtered colimits in $\cC_T$. These are particular
examples of \emph{sifted} colimits, which are colimits indexed over
$\cI$ such that the diagonal
map $\cI\rightarrow \cI\times \cI$ is final. Sifted colimits can also be
characterized as those colimits which commute with finite products in
$\Set$. One of the main results of \cite[Thm.~2.1]{ARV10} is that if $\cC$ is
finitely cocomplete then $T$ commutes with all ($\kappa$-)sifted
colimits if and only if $T$ commutes with all reflexive coequalizers and
($\kappa$-)filtered colimits.

\subsection{Algebraic theories}\label{sec:algebraic-theories}
Monads which commute with sifted colimits arise naturally in the study
of algebraic theories in the sense of Lawvere \cite{Law04}. Recall that
an (algebraic) theory is a category $\cT$ equipped with a product
preserving functor $i\colon\FinSet^{\op}\rightarrow \cT$ which is
essentially surjective. If we let $\underline{n}\in \FinSet^{\op}$ be a set
with $n$ elements, then, since we are working in the opposite category,
$\underline{n}\cong \underline{1}^{\times n}$. So $\cT$ is equivalent to
a category whose objects are $\{i(\underline{1})^n\}_{n\in \bN}$.  If
$\cC$ is a category with finite products, a $\cT$-model in $\cC$ is a
product preserving functor $A\colon\cT\rightarrow \cC$. The collection
of $\cT$-models in $\cC$ forms a category $\model{\cC}{\cT}$ whose
morphisms are natural transformations.

We should think of $\cT$ as encoding the operations on an object of
$\model{\cC}{\cT}.$ For example, suppose $k$ is a commutative ring and define a
theory $\cT$ as the subcategory of the opposite category of
$k$-algebras whose $n$th object $i(\underline{n})$ is the free $k$-algebra
$k\langle x_1,\cdots,x_n\rangle$.

Note that for each $k$-algebra $A$, we obtain a $\cT$-model in $\Set$ by
\[
i(\underline{n})\mapsto \kAlg(k\langle x_1,\cdots,x_n\rangle, A)\cong A^n.
\]
Conversely, if $A\in \model{\Set}{\cT}$ we can identify $A$ with the
set $A(i(\underline{1}))$ equipped with the operations encoded by the
functor $A$.  For example, consider the maps in
\[
\cT(i(\underline{2}),i(\underline{1})) \cong 
\kAlg(k\langle x_1\rangle,k\langle x_1,x_2\rangle)
\]
which send $x_1$ to $x_1+x_2$ and  $x_1\cdot x_2$ respectively. These
two maps define natural operations
\begin{align*}
  (-) + (-)\colon A(i(\underline{1}))^2
  &
  \rightarrow A(i(\underline{1})) \\
  (-) \; \cdot \; (-)\colon A(i(\underline{1}))^2
  &
  \rightarrow A(i(\underline{1})).
\end{align*}
The first map is commutative since $x_1+x_2=x_2+x_1$, while the latter
generally is not. By combining maps in $\cT$ we can see that the
latter operation will distribute over the former. All of these
operations and their relations coming from $\cT$ show that
$A(i(\underline{1}))$ is a $k$-algebra.

  \begin{example}\label{ex:theories}\ 
    \begin{enumerate}
      \completeblockE{}
      
    \item \label{enum:group} Let $\cT_{\Gp}$ be the category whose objects
      are indexed by natural numbers and whose morphisms are
      \[
      \cT_{\Gp}(m,n)=\Group(F\{n\},F\{m\}),
      \] 
      where $F\{m\}$ is the free group on $m$ elements, then $\model{\Set}{\cT_{\Gp}}$ is equivalent to the category of groups.

    \item Let $G$ be a group and let $\cT_{G}$ be the theory defined
      as in \eqref{enum:group} but with
      \[
      \cT_{G}(m,n)=G\text{-}\Set(F\{n\},F\{m\}), 
      \] 
      where $F\{m\}$ is the free $G$-set on $m$ elements, then
      $\model{\Set}{\cT_{G}}$ is equivalent to the category of $G$-sets.
      
    \completeblockF{}
  
    \completeblockG{}
    
    \item Let $k$ be a commutative ring and $\cT_{Lie_k}$ be the theory
      defined as in \eqref{enum:group} but with 
      \[
      \cT_{Lie_k}(m,n)=\Lie_k(F\{n\},F\{m\}), 
      \]
      where $F\{m\}$ is the free Lie algebra over $k$ on $m$ elements,
      then $\model{\Set}{\cT_{Lie_k}}$ is equivalent to the category of Lie
      algebras over $k$.

   \item Let $\cT_{C^\infty}$ be the theory defined as in
      \eqref{enum:group} but with
      $\cT_{C^\infty}(m,n)=\Cinf(\mathbb{R}^m,\mathbb{R}^n)$, the set of
      smooth maps from $\mathbb{R}^m$ to $\mathbb{R}^n$, then
      $\model{\Set}{\cT_{C^\infty}}$ is equivalent to the category of
      $C^\infty$-rings \cite{Dub81,MoR91}.
    \end{enumerate}
  \end{example}

The list in \Cref{ex:theories} is far from comprehensive and is
limited only by the authors' imagination and the readers' patience.

If $\cT$ is a theory, we obtain $\cT$-models $\cT\{\underline{m}\}$ in
$\Set$ by setting $\cT\{\underline{m}\}(-)=\cT(i(\underline{m}),-)$, which
we can think of as the free objects on a set of $m$-elements. This
construction lifts to a (covariant!) functor $\cT\{-\}\colon \FinSet
\rightarrow \model{\Set}{\cT}$. Since $\Set$ is the closure of $\FinSet$
under sifted colimits, and sifted colimits commute with products in
$\Set$ we see that we can canonically prolong this to a functor from
$\Set$. This functor admits a forgetful right adjoint given by
evaluating at $\underline{1}$.

Just as in \Cref{sec:monads}, we can compose these adjoints to obtain a
monad $T=U\cT\{-\}$ on $\Set$. More explicitly, the formula for the left Kan
extension shows
\begin{equation}\label{eqn:monad-associated-to-a-theory}
  TX = \int^{\underline{n}\in\FinSet^{\op}} \cT(i(\underline{n}),i(\underline{1}))\times X^{n}.
\end{equation}
Since $\model{\Set}{\cT}$ is locally presentable with a conservative right adjoint $U$ which 
creates sifted colimits we can apply
\Cref{prop:barr-beck-for-presentable-categories} and see that
$\model{\Set}{\cT}$ is equivalent to the category of $T=U\cT\{-\}$
algebras in $\Set$.  
\saveblockB{}

Given a $T$-algebra $X\in \model{\Set}{\cT}$ we can consider the
category $(\model{\Set}{\cT})_{\downarrow X}$ of algebras over
$X$. Generally this can not be realized as the category of
$\Set$-valued models for an algebraic theory. However, it can be
realized as the category of models of a \emph{graded} theory
$\cT\downarrow X$ (see \Cref{ex:graded-theories}). This additional generality will prove useful in the identification of the $E_2$ term of the $T$-algebra spectral sequence with Quillen
cohomology groups in \refthmB (see \Cref{sec:methodology}).

\begin{defn}\ 
  For a set $S$ of gradings, an \emph{$S$-graded theory} $\cT$ is a category
  $\cT$ equipped with a product preserving functor
  $i\colon(\FinSet^{S})^{\op}\rightarrow \cT$ which is essentially
  surjective.
\end{defn}

When working with graded theories, it is sometimes useful to use the isomorphism of categories 
\[
\phi\cn\FinSet^S \iso \cF_S
\]
where $\cF_S$ is the category whose objects are pairs $(X,f)$ where
$f\cn X \to S$ is a map of sets with finite fibers.
Morphisms in $\cF_S$ are given by commuting triangles over $S$. For $x
= (x_s)_{s \in S} \in \FinSet^S$, $\phi(x) = (\coprod_s x_s,
\coprod_sf_s)$ where each $f_s$ is the unique map of sets $x_s \to
\{s\}$.  Likewise, for $(X,f) \in \cF_S$, $\phi^{-1}(X,f) =
(f^{-1}(s))_{s \in S}$.

\saveblockC{}

Here are two prototypical examples of graded theories.
\begin{example}\ 
  \label{ex:graded-theories}
  \begin{enumerate}
    \item Fix an abelian group $M$. The category of abelian groups
      over $M$ is the category of $(\cT_{\Ab}\downarrow M)$-models in
      $\Set$ where $\cT_{\Ab}\downarrow M$ is a theory graded on the
      underlying set of $M$ and is defined as follows: The objects of
      $\cT_{\Ab}\downarrow M$ are the objects of $\cF_M^{\op}$.  For each $x =
      (x_m)_{m \in M}$, let $i(x) = \phi(x) = (X,f)$ be the corresponding
      object of $\cF_M^{\op}$ and let $F_{\Ab}X$ be the free abelian group
      over $M$ on the elements of $X$, with structure map $F_{\Ab}X \to
      M$ determined by the set map $f\cn X \to M$. 
      If $i(x) = (X,f)$ and $i(x') = (X',f)$ are two objects of $\cT_{\Ab}\downarrow M$, then we define
      \[
      (\cT_{\Ab}\downarrow M)((X,f), (X',f')) = 
      \AbGroup_{\downarrow M}\left(
        F_{\Ab}X', 
        F_{\Ab}X
      \right). 
      \]
  \item The category of $\bZ$-graded abelian groups is the category of
    $\cT^{\bZ}_{\Ab}$-models in $\Set$ where $\cT_{\Ab}^{\bZ}$ is the
    $\bZ$-graded theory defined as follows: For $j \in \bZ$ let
    $\bZ^{t}[j]$ denote the free abelian group on $t$ elements
    concentrated in degree $j$. If $i(t)$ and $i(t')$ are two
    elements of $\cT^{\bZ}_{\Ab}$ with $t  = (\underline{t}_j)_{j \in \bZ}, t' = (\underline{t'}_{k})_{k\in \bZ}\in (\FinSet^\bZ)^\op$ then we define  \[
    \cT^{\bZ}_{\Ab}(i(t), i(t')) = \AbGroup^{\bZ}\left(\bigoplus_{k\in
      \bZ}\bZ^{t^{\prime}_{k}}[k], \bigoplus_{j\in \bZ}
      \bZ^{t_j}[j]\right)\cong\prod_{k\in \bZ}\AbGroup\left(
      \bZ^{t^{\prime}_{k}}[k], \bZ^{t_k}[k]\right).
    \] 

  %%
  %% SAVE
  %%
  \longsaveblockH{}
  \end{enumerate}
\end{example}

The theories in \Cref{ex:theories} can all be extended to the graded
case similarly and in general overcategories coming from graded theories
are graded theories. 
 
\saveblockF

\begin{remark}\label{rem:three-way-correspondence}
  The evaluation operation on the category of models in $\Set$ of an $S$-graded theory $\cT$ defines
  a forgetful functor to $\Set^{S}\cong \prod_{S}\Set$.
  This functor is finitary (meaning it preserves countable filtered colimits)
  and monadic (meaning it satisfies
  \Cref{thm:monadicity}) with associated monad $T$,
  so the category of $\cT$-models in $\Set$ is equivalent to the category of
  $T$-algebras in $\Set^{S}$.

  This construction is part of a correspondence demonstrated in
  \cite[App.~A]{ARV11} between categories of $\Set$-valued models over
  $S$-graded theories and finitary monadic categories over $\Set^S$.  
  Their results can in turn be used to show a correspondence between the
  latter and algebraic categories in the sense of
  \cite{Qui67,Qui70}.
\end{remark}

\longsaveblockJ{}

\completeblockK{}

The following result will help us in \Cref{prop:abelianization-adjunction-model-cats} connect the machinery of algebraic theories to Quillen cohomology.
\begin{prop}\label{prop:abelianization-adjunction}
  Let $\cT$ be a graded theory and $X$ a $\cT$-model in $\Set$.  Let
  $\cT \downarrow X$ be the $S$-graded theory whose
  category of models is $(\model{\Set}{\cT})_{\downarrow X}$, the
  category of objects over $X$ in $\model{\Set}{\cT}$. Then there is an
  $S$-graded theory $(\cT \downarrow X)_{\ab}$ such that the category
  of $(\cT \downarrow X)_{\ab}$-models in $\Set$, $\model{\Set}{(\cT
    \downarrow X)_{\ab}}$, is equivalent to the category of abelian
  group objects in $\model{\Set}{(\cT \downarrow X)}$.

  These two categories are monadic over $\Set^S$ with associated
  monads $(T{\downarrow X})_\ab$ and $T{\downarrow X}$.  The forgetful functor 
  \[
  \Set^{S}_{(T{\downarrow X})_\ab}\rightarrow \Set^{S}_{T{\downarrow X}} 
  \]
  is monadic with left adjoint $\textrm{Ab}$.
\end{prop}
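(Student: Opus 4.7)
The plan is to construct the $S$-graded theory $(\cT \downarrow X)_\ab$ explicitly and then derive the remaining assertions from the monad machinery already summarized in the excerpt.

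First, I would build $(\cT \downarrow X)_\ab$ by starting from $\cT \downarrow X$ and adjoining, for each sort $s \in S$, new morphisms
\[
+_s \cn i(2[s]) \to i([s]), \quad 0_s \cn i(0) \to i([s]), \quad -_s \cn i([s]) \to i([s]),
\]
subject to the relations that (a) each triple $(+_s, 0_s, -_s)$ equips $i([s])$ with the structure of an abelian group object in the category freely generated by $\cT \downarrow X$ under finite products, and (b) every morphism $\phi \cn i(t) \to i([s'])$ of $\cT \downarrow X$ is an abelian group homomorphism separately in each of its arguments. Equivalently, $(\cT \downarrow X)_\ab$ is the tensor product of the graded theory $\cT \downarrow X$ with the $S$-sorted theory of abelian groups. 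A direct unwinding of the definitions shows that a $\Set$-valued model of the resulting theory is exactly an $S$-indexed family of abelian groups equipped with a compatible $\cT \downarrow X$-model structure whose operations are homomorphisms in each variable separately, i.e., an abelian group object in $\model{\Set}{\cT \downarrow X}$.

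Since $(\cT \downarrow X)_\ab$ is itself an $S$-graded theory, \Cref{rem:three-way-correspondence} supplies a finitary monad $(T \downarrow X)_\ab$ on $\Set^S$ with $\model{\Set}{(\cT \downarrow X)_\ab} \simeq \Set^S_{(T \downarrow X)_\ab}$, and likewise furnishes the monad $T \downarrow X$. To see that the forgetful functor $U \cn \Set^S_{(T \downarrow X)_\ab} \to \Set^S_{T \downarrow X}$ is monadic, I would verify the hypotheses of \Cref{prop:barr-beck-for-presentable-categories}. Both source and target are locally presentable. Limits in both are computed in $\Set^S$ by \Cref{prop:monads-colimits-limits}\eqref{enum:limits}, so $U$ preserves limits. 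Both monads come from algebraic theories and therefore commute with sifted colimits, so reflexive coequalizers and filtered colimits in both source and target of $U$ are also computed in $\Set^S$; this implies that $U$ reflects filtered colimits and creates reflexive coequalizers. \Cref{prop:barr-beck-for-presentable-categories} then yields the left adjoint $\textrm{Ab}$ and identifies $\Set^S_{(T \downarrow X)_\ab}$ with the category of algebras for the induced monad on $\Set^S_{T \downarrow X}$.

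The principal technical obstacle lies in the first step: one must carefully check that the constructions above assemble into a genuine $S$-graded theory (with product-preserving, essentially surjective functor from $(\FinSet^S)^\op$) and that its $\Set$-valued models coincide with abelian group objects in $\model{\Set}{\cT \downarrow X}$ without spurious extra structure. All remaining assertions are then formal consequences of the locally presentable and monadic framework already reviewed.
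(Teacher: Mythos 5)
Your overall architecture matches the paper's: first realize the abelian group objects as the models of a new $S$-graded theory, then deduce monadicity of the forgetful functor from \Cref{prop:barr-beck-for-presentable-categories} using that limits and sifted colimits in both categories are computed in $\Set^S$. The second half of your argument is essentially identical to the paper's and is fine. For the first half the paper does not construct the theory by hand; it cites \cite[Thm.~3.11.3]{Bor94a} for the ungraded statement and \cite[3.3]{Bla08} for the observation that the argument carries over to the graded case. Your plan to reprove this by an explicit presentation is legitimate in principle, and your ``tensor product with the ($S$-sorted) theory of abelian groups'' description is the correct universal construction.

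However, the explicit relations you impose are wrong, and this is exactly at the step you yourself identify as the principal obstacle. For $(+_s,0_s,-_s)$ to make a model $A$ into an abelian group object of $\model{\Set}{(\cT\downarrow X)}$, the structure maps must be \emph{morphisms of models}, i.e.\ each operation $\phi\colon i(t)\to i([s'])$ of $\cT\downarrow X$ must commute with addition applied componentwise on its source:
\[
\phi\bigl(a_1+b_1,\dots,a_n+b_n\bigr)=\phi(a_1,\dots,a_n)+\phi(b_1,\dots,b_n).
\]
This ``joint'' additivity is what the tensor product of theories encodes. Your relation (b), that $\phi$ be a homomorphism \emph{separately in each of its arguments}, is multilinearity, which is a genuinely different and inequivalent condition; the two are not interchangeable, and the models of the theory you actually presented are not the abelian group objects. (The distinction matters concretely: for $\cT$ the theory of commutative rings, joint additivity of the multiplication is what forces abelian group objects over $X$ to be square-zero extensions, which is precisely the structure used later in \Cref{sec:examples-operads}; multilinearity of the multiplication is automatic and would impose nothing.) Replacing (b) by the joint commutation relation, and then checking that the resulting presentation still defines an $S$-graded theory with an essentially surjective product-preserving functor from $(\FinSet^S)^{\op}$, repairs the argument; alternatively one can simply invoke the references the paper uses.
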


\begin{proof}
  We have already noted that the category of models over $X$ is a
  graded theory. Let $S$ denote the
  grading for this theory. As a consequence of
  \cite[Thm.~3.11.3]{Bor94a} the category of abelian group objects in
  the category of models for a theory is a category of
  models for a new theory.  As noted in \cite[\S~3.3]{Bla08} this
  argument passes to the $S$-graded case, mutatis mutandis, to yield
  an $S$-graded theory for the abelian group objects.

  These are both locally presentable categories. Since the forgetful functor is conservative and limits and sifted colimits in these categories are both calculated in $\Set^S$ we can apply
  \Cref{prop:barr-beck-for-presentable-categories} to complete the proof.
\end{proof}

\subsection{Simplicial categories of
\texorpdfstring{$T$}{T}-algebras}\label{sec:simplicial-cat}

The theory of $T$-algebra limits and colimits from
\cref{sec:algebraic-theories} admits a straightforward extension to
the enriched context. Since we are interested in studying the \emph{space} of maps between
two $T$-algebras, modeled as a simplicial set, we give this extension
in the case that $\cC$ is a simplicial category.\footnote{Although we
  will normally distinguish between topological spaces and simplicial
  sets, we will refer to both as spaces in the case of mapping
  objects. We justify this abuse by noting that we are primarily
  interested in \emph{derived} mapping spaces, which are only 
  homotopy types, so it is not necessary to distinguish between
  the choice of model.} To obtain categorical information analogous to
the previous section we will replace all of our categories with
simplicial categories, all of our functors with simplicial functors,
and all of our natural transformations with simplicial natural
transformations. For general background
on enriched categories and functors between them the reader is
encouraged to consult \cite[\S~6.2]{Bor94a} or \cite{Kel05}.

Recall that we require a simplicial category $\cC$ to have a
tensor bifunctor 
\begin{equation*}
  \otimes\colon \sSet \times \cC \rightarrow \cC.
\end{equation*}
This is related to the simplicial mapping functor $\sC(-,-)$ and the
simplicial cotensor $(-)^{-}$ via natural adjunction isomorphisms 
\begin{equation*}\label{eqn:hom-tensor-adjunction}
  \sSet(K,\sC(C,D))\cong \sC(K\otimes C, D)\cong \sC(C,D^{K}).
\end{equation*}
\completeblockL{}%

\begin{prop}\label{prop:bicomplete-cat-algebras}
    Suppose that
    \begin{hypothenumerate}
    \item $\cC$ is a bicomplete simplicial category.
    \item $T$ is a simplicial monad acting on $\cC.$
    \item $T$ commutes with either 
	  \begin{hypothenumerate}
		  \item reflexive coequalizers or\label{it:bicomplete-a}
		  \item filtered colimits.\label{it:bicomplete-b}
	  \end{hypothenumerate}
  \end{hypothenumerate}
  Then $\cC_T$ is a bicomplete simplicial category such that 
  \begin{enumerate}
    \item The forgetful functor
      $\sC_T\rightarrow \sC$ creates limits and cotensors.
      \item The simplicial tensor is constructed as follows:
    \begin{equation}\label{eqn:tensor}
      %% arrows pointing the other way:
      % X \otimes_T V =
      % \xymatrix@C=11ex{
      % \coeq \big[ F_T(X \otimes V)
      % \ar@/^1.75pc/[r]^-{F_T(e\otimes V)}& F_T(TX\otimes V)
      % \ar@<1ex>[l]^-{F_T(\mu \otimes V)}\ar@<-1ex>[l]_-{\mu\circ
      % \alpha}\big].\\
      % }
      K \otimes_T X = 
      \xymatrix@C=11ex{
	\coeq \big[ 
	F_T(K \otimes TUX) 
	\ar@<1ex>[r]^-{F_T(K\otimes \mu )}\ar@<-1ex>[r]_-{
	  \alpha} & 
	F_T(K\otimes UX)
	\ar@/_1.75pc/[l]_-{F_T(K\otimes e)} \big]. \\
      }
    \end{equation}
    Here $\alpha\colon F_T(K\otimes TUX)\rightarrow F_T(K\otimes UX)$ is
    adjoint to the assembly map $K\otimes TUX\rightarrow T(K\otimes UX)$.
\end{enumerate}

\end{prop}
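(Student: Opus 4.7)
The proof has three components: bicompleteness of $\cC_T$, creation of cotensors, and the coequalizer formula for tensors. Completeness of $\cC_T$ is immediate since $U$ creates limits by \Cref{prop:monads-colimits-limits}. For cocompleteness, hypothesis~(\ref{it:bicomplete-a}) reduces the problem to \Cref{prop:coequalizers-imply-cocomplete}, while under hypothesis~(\ref{it:bicomplete-b}) we invoke \cite[4.3.6]{Bor94a}.

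To construct cotensors and show $U$ creates them, I would equip the underlying cotensor $X^K$ in $\cC$ with a $T$-algebra structure built from the simplicial enrichment of $T$. The enrichment provides a canonical strength map $\tau\cn T(X^K) \to (TX)^K$, obtained by adjoining the composite $K \to \cC(X^K, X) \to \cC(T(X^K), TX)$ whose first arrow is the canonical map coming from the cotensor-hom adjunction and whose second is the enrichment action of $T$. Composing with $(\mu_X)^K\cn (TX)^K \to X^K$ gives the desired structure map $T(X^K) \to X^K$. The $T$-algebra axioms follow from the coherences of the enrichment, and the universal property $\cC_T(Y, X^K) \cong \sSet(K, \cC_T(Y, X))$ is a direct calculation using the equalizer description of $\cC_T(Y, X)$.

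For the tensor, I would proceed by representability. Chaining the equalizer description of $\cC_T(X,Z)$ from~\eqref{eqn:maps-of-t-algebras} with the simplicial tensor-hom adjunction in $\cC$ and the free-forgetful adjunction $F_T \dashv U$ yields a natural isomorphism
\[
  \sSet(V, \cC_T(X, Z)) \cong \eq\bigl[\cC_T(F_T(UX\otimes V), Z) \rightrightarrows \cC_T(F_T(TUX\otimes V), Z)\bigr].
\]
By the Yoneda lemma, $X \otimes_T V$ is the coequalizer in $\cC_T$ of the corresponding parallel pair. Tracking the two arrows through the adjunctions identifies them as the maps $F_T(\mu\otimes V)$ and $\mu\circ\alpha$ appearing in the statement, and the pair is reflexive with common section $F_T(e\otimes V)$ induced by the monad unit. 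Existence of this coequalizer follows from \Cref{prop:coequalizers-imply-cocomplete} under hypothesis~(\ref{it:bicomplete-a}) and from the bicompleteness already established under hypothesis~(\ref{it:bicomplete-b}).

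The main technical obstacle is the bookkeeping required to identify the two arrows of the coequalizer with the two arrows of the equalizer after passing through the $F_T\dashv U$ adjunction: one must carefully trace how the structure map $\mu_X$ appears on one side and how the enriched action of $T$ on hom-spaces manifests as the assembly map on the other. Once this identification is settled, the remaining simplicial category axioms---functoriality of tensor and cotensor in both variables, coherence with the cartesian product on $\sSet$, and the hom-tensor-cotensor triangle $\cC_T(X\otimes_T V, Z) \cong \sSet(V, \cC_T(X, Z)) \cong \cC_T(X, Z^V)$---are formal consequences of the universal properties and the uniqueness of representing objects.
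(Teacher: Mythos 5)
Your proposal is correct and follows essentially the same route as the paper: bicompleteness via \Cref{prop:monads-colimits-limits} together with \Cref{prop:coequalizers-imply-cocomplete} or \cite[4.3.6]{Bor94a}, cotensors created by $U$, and the tensor formula forced by the equalizer description of the hom-spaces and the chain of adjunctions. The paper is simply terser, citing \cite[\S VII Prop.~2.10]{EKMM97} for the cotensor construction you spell out via the strength map, and asserting rather than deriving the Yoneda/representability argument that pins down \eqref{eqn:tensor}.
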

\begin{proof}
  First we check that $\cC_T$ is bicomplete: By
  \Cref{prop:monads-colimits-limits} $\cC_T$ is complete and $U$ creates limits.  
  Under hypothesis \eqref{it:bicomplete-a} we can apply
  \Cref{prop:coequalizers-imply-cocomplete} to see that $\cC_T$ is
  cocomplete. When hypothesis \eqref{it:bicomplete-b} holds,
  cocompleteness follows from \cite[Prop.~4.3.6]{Bor94a}.

  The hom spaces of $\cC_T$ are defined by taking the equalizer, in
  $\sSet$, of the the obvious analogue of \eqref{eqn:maps-of-t-algebras}. The fact
  that $U$ creates cotensors appears in \cite[Prop.~VII.2.10]{EKMM97}.
  In order for the adjunctions to hold the tensor must be defined by
  \eqref{eqn:tensor}.
\end{proof}

Note that under the hypotheses of \Cref{prop:bicomplete-cat-algebras}, if $T$
commutes with reflexive coequalizers we can compute \eqref{eqn:tensor} in $\cC$ by \Cref{prop:monads-colimits-limits}.

Graded algebraic theories are extended similarly to the simplicial
context: Regarding the category of finite sets as a simplicially enriched
category with discrete mapping objects, a simplicial algebraic theory
is just a product preserving functor $(\FinSet^S)^{\op}\rightarrow
\cT$ to a simplicially enriched category $\cT$ which is essentially
surjective. Similarly, a $T$-model in a
simplicially enriched category $\cC$ with finite products is just a product
preserving simplicial functor $\cT\rightarrow \cC$.

\begin{example}\label{ex:simplicial-theories}
  Each of the examples listed in \Cref{ex:theories} and their graded
  counterparts naturally defines a
  simplicial theory. The $\cT$-models in simplicial sets are
  equivalent to their simplicial analogues.
\end{example}

\begin{prop}\label{prop:algebraic-theories-are-simplicial-and-bicomplete}
  Let $T$ be the simplicial monad acting on $\sSet^S$ associated to an
  $S$-graded simplicial algebraic theory $\cT$. Then the category
  $\sSet^S_T$ of $T$-algebras is a bicomplete simplicial category with
  tensor defined by \eqref{eqn:tensor}. 
  
  If $S=*$ and $\cT$ is an ordinary theory regarded as a constant
  simplicial theory then for each $K\in \sSet$ and $X\in
  \sSet_T$ we have the identification 
  \[
  (K\otimes X)_n =\coprod^{\model{\Set}{\cT}}_{k\in K_n} X_n.
  \]
\end{prop}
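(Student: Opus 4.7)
The plan is to deduce the first claim directly from \Cref{prop:bicomplete-cat-algebras} and to establish the explicit formula by evaluating the coequalizer \eqref{eqn:tensor} levelwise.

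For the first claim, the category $\sSet^S \cong \prod_{s \in S} \sSet$ is manifestly bicomplete and simplicial, and the monad $T$ arising from $\cT$ is a simplicial monad because its defining coend---the simplicially enriched, graded analogue of \eqref{eqn:monad-associated-to-a-theory}---is a simplicial construction. The one nontrivial hypothesis of \Cref{prop:bicomplete-cat-algebras} to verify is that $T$ commutes with reflexive coequalizers (or filtered colimits). This follows from the fact that sifted colimits commute with finite products in $\Set$: since colimits and finite products in $\sSet^S$ are computed componentwise and levelwise in $\Set$, the coend expression commutes with sifted colimits of its input. Then \Cref{prop:bicomplete-cat-algebras} supplies the simplicial bicomplete structure on $\sSet^S_T$ with tensor given by \eqref{eqn:tensor}.

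For the explicit formula I first identify $\sSet_T \cong s\model{\Set}{\cT}$ when $S = *$ and $\cT$ is viewed as a constant simplicial theory. Since the hom objects in $\cT$ are then discrete, the coend formula shows that $T$ is just the underlying set-theoretic monad $T_0$ applied levelwise, so a $T$-algebra structure on a simplicial set $X$ is nothing more than a levelwise $T_0$-algebra structure compatible with the face and degeneracy maps. Under this identification colimits in $\sSet_T$ are computed levelwise, because evaluation $\mathrm{ev}_n \colon s\model{\Set}{\cT} \to \model{\Set}{\cT}$ has both a free left adjoint and a cofree right adjoint.

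To obtain the formula I evaluate \eqref{eqn:tensor} at level $n$. Using $(UX \otimes K)_n = X_n \times K_n$ (the tensor in $\sSet$ being the cartesian product) and that $F_T$ is applied levelwise, this becomes the coequalizer in $\model{\Set}{\cT}$ of
\[
F_{T_0}(T_0 X_n \times K_n) \rightrightarrows F_{T_0}(X_n \times K_n),
\]
which is precisely the $\Set$-tensor of the $\cT$-model $X_n$ with the set $K_n$. A universal-property argument---using $\model{\Set}{\cT}(A \otimes_{\Set} S, B) \cong \Set(S, \model{\Set}{\cT}(A, B)) \cong \model{\Set}{\cT}\bigl(\coprod_{s\in S}^{\cT} A, B\bigr)$---then identifies this tensor with the $S$-indexed coproduct, yielding the desired formula. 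The main obstacle is this last identification: one must carefully confirm that the levelwise evaluation of \eqref{eqn:tensor} really computes the $\Set$-tensor in $\model{\Set}{\cT}$, and then match that tensor with the coproduct in $\cT$-models.
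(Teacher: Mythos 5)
Your proof is correct and follows essentially the same route as the paper's: both verify that $T$ commutes with sifted colimits (because sifted colimits commute with products in $\Set$, computed componentwise in $\sSet^S$) and then invoke \Cref{prop:bicomplete-cat-algebras}, and both evaluate the coequalizer \eqref{eqn:tensor} levelwise to recognize the coproduct. The only cosmetic difference is that you pass through the universal property of the $\Set$-tensor in $\model{\Set}{\cT}$, whereas the paper directly matches the levelwise coequalizer with the presentation \eqref{eqn:colimits-of-t-algebras} of the coproduct.
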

\completeblockM{}

\subsection{Monads from operads}
\label{sec:monads-from-operads}
A \emph{symmetric sequence} in $\sSet$ is a sequence 
\[
C = \{C(n)\}_{n \ge 0}
\] 
where $C(n)$ is a simplicial set with a right $\Sigma_n$-action.  A map of
symmetric sequences is a levelwise equivariant map.

\completeblockN{} %
For the remainder of this section we
assume that $\sC$ is simplicial symmetric monoidal category with
tensor $\otimes$ such that:
\begin{hypothenumerate}
  \item $\otimes$ distributes over countable coproducts in $\sC$ and
  \item there is a symmetric monoidal functor $i:\sSet\rightarrow \sC$,
  such that the tensor of a space $K$ and an object $X$ of $\sC$ is
  defined by $iK\otimes X$. 
\end{hypothenumerate}

Now given a symmetric sequence $C$, we have an associated functor $T_C \cn
\sC \to \sC$ defined on objects by
\begin{equation}\label{eq:monad-associated-to-operad}
  T_C(X) = \coprod_{n \ge 0} C(n) \otimes_{\Sigma_n} X^{\otimes n} \cong \int^{\underline{n}\in \mathrm{Iso}\Set} C(n)\otimes X^{\otimes n} .
\end{equation}
A map of symmetric sequences yields a natural transformation of
functors, and this construction yields a functor from
symmetric sequences to endofunctors of $\sC$. 

There is a symmetric monoidal product on symmetric sequences, which we will also 
denote by $\otimes$:
\[
(C \otimes D)(n) = \coprod_{i + j = n} C(i) \times D(j)
\times_{\Sigma_i \times \Sigma_j} \Sigma_{n}
\]
Since the symmetric monoidal structure on $\sC$ distributes over coproducts we see:
\[
  T_C \otimes T_D \iso T_{C \otimes D}.
\]
Now we define the circle product by:
\[
  (C \circ D)(n) = \left(\coprod_{i\geq 0} C(i) \times_{\Sigma_i} D^{\otimes i}\right)(n).
\]

This is part of a monoidal structure on symmetric sequences such that the construction $C \mapsto T_C$ defines a monoidal functor to the category of 
endofunctors with composition product. An operad $\sO$ is a symmetric
sequence which is a monoid for the circle product; the associated
endofunctor is then a monad (see \cite[\S~11]{Rez97} for additional details).

\begin{rmk}
  \label{rmk:operads-in-sSet-is-algebraic}
  The category of operads in $\sSet$ can be constructed as the
  category of $\sSet$-valued models for a graded simplicial algebraic theory.
  As in \cite[App.~A]{Rez96} one can construct the free monoid with
  respect to the circle product on a symmetric sequence. Regarding the
  $\Sigma_n$-set $\Sigma_n\times \underline{i}$ as a symmetric
  sequence concentrated in degree $n$ (and the empty set elsewhere),
  we can apply this free construction to the symmetric sequences
  $\{\Sigma_n \times \underline i\}_{(n,i)\in \bN\times \bN}$ to
  define an $\bN$-graded algebraic theory whose algebras are operads
  in $\Set$. The category of simplicial operads is the associated
  category of models in $\sSet$.
\end{rmk}

The following standard result gives criteria for identifying when the
category of algebras over an operad is simplicially enriched.
\begin{prop}\label{prop:algebras-over-an-operad-is-bicomplete}
  Suppose that $\cC$ is a bicomplete simplicial symmetric monoidal
  category such that:
  \begin{hypothenumerate}
  \item There is a symmetric monoidal functor $ i\colon \sSet\rightarrow
  \cC$
    defining the simplicial tensor.
  \item The monoidal product in $\cC$ commutes with countable coproducts
  and either\label{it:tensor-commutes}
    \begin{hypothenumerate}
      \item reflexive coequalizers or
      \item filtered colimits.
    \end{hypothenumerate}

  \end{hypothenumerate}
  Then for any operad $\sO$ of simplicial sets, the category of
  $\sO$-algebras in $\cC$ is a bicomplete simplicial category.
\end{prop}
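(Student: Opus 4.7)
The plan is to reduce to \Cref{prop:bicomplete-cat-algebras}, applied to the monad $T_\sO$ on $\cC$ associated to the operad $\sO$ as in \Cref{sec:monads-from-operads}. That proposition asks for three things: that $\cC$ is bicomplete and simplicial, that $T_\sO$ is a simplicial monad on $\cC$, and that $T_\sO$ commutes with reflexive coequalizers or with filtered colimits, according to which half of hypothesis~\eqref{it:tensor-commutes} is assumed. The first condition is part of our standing hypothesis on $\cC$.

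For the second, the monad structure on $T_\sO$ is exactly the one exhibited in the discussion preceding the statement, via the circle-product monoid structure of $\sO$. The simplicial enrichment is inherited from the tensor $\otimes$ on $\cC$: since $i\cn \sSet \to \cC$ is symmetric monoidal, each simplicial set $\sO(n)$ yields an object $i\sO(n) \in \cC$, and the $n$-th layer $i\sO(n) \otimes_{\Sigma_n} X^{\otimes n}$ is naturally simplicial in $X$. I would spell this out by checking naturality of the unit and multiplication of $T_\sO$ on the simplicial hom objects, using symmetric monoidality of $i$ and the simplicial structure on $\sO$.

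For the third condition, decompose
\[
  T_\sO(X) \;=\; \coprod_{n\ge 0} \; i\sO(n) \otimes_{\Sigma_n} X^{\otimes n}.
\]
Countable coproducts are preserved by $\otimes$ by hypothesis and commute with all colimits on their own. The quotient by $\Sigma_n$ is itself a reflexive coequalizer, so it preserves both classes of colimits. Tensoring with the fixed object $i\sO(n)$ preserves them by hypothesis. Everything therefore reduces to showing that the $n$-fold tensor power functor $X \mapsto X^{\otimes n}$ preserves the chosen class of colimits.

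This last point is the main obstacle. Separate-variable preservation by $\otimes$ does not automatically yield preservation by the diagonal functor $X \mapsto X^{\otimes n}$. For filtered colimits the usual cofinality of the diagonal $\cI \to \cI^{\times n}$ in a filtered category, combined with separate-variable preservation, closes the gap by a routine argument. For reflexive coequalizers the required input is the classical observation that reflexive coequalizers commute with themselves --- the key lemma used in \cite[\S II.7]{EKMM97} to construct colimits of commutative $S$-algebras --- and an inductive application of this fact gives preservation by $X^{\otimes n}$. With this technical point in hand, the hypotheses of \Cref{prop:bicomplete-cat-algebras} are verified and the conclusion follows.
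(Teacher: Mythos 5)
Your proposal is correct and follows essentially the same route as the paper's proof: reduce to \Cref{prop:bicomplete-cat-algebras} by checking that $T_\sO = \coprod_n i\sO(n)\otimes_{\Sigma_n}(-)^{\otimes n}$ is a simplicial monad commuting with the relevant colimits, the latter via the coproduct decomposition and the former via assembly maps built from the symmetric monoidality of $i$. If anything you are more careful than the paper, which simply asserts the isomorphism $(\colim_I X_i)^{\otimes n}\cong\colim_I(X_i^{\otimes n})$ without comment, whereas you correctly isolate this diagonal-power step and supply the standard justifications (finality of the diagonal for filtered colimits; the reflexive-coequalizers-commute-with-themselves lemma of \cite[\S II.7]{EKMM97} for the other case).
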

\completeblockO{} 
The hypotheses concerning colimits for this proposition hold whenever
the symmetric monoidal structure comes from a \emph{closed} symmetric
monoidal structure and hence distributes over all colimits. For example,
simplicial sets, simplicial abelian groups, and simplicial $R$-modules
all satisfy the conditions of
\Cref{prop:algebras-over-an-operad-is-bicomplete} with their respective
closed symmetric monoidal structures. The categories of pointed compactly
generated weak Hausdorff spaces or pointed simplicial sets, each equipped with the smash product, satisfy these conditions. Any of the standard closed symmetric monoidal categories of spectra also satisfy the hypotheses.

\section{Homotopy theory of \texorpdfstring{$T$}{T}-algebras}\label{sec:model-structures}
In \Cref{sec:model-structures-on-t-algebras} we recall conditions that
guarantee that the category of $T$-algebras has a suitable homotopy
theory. After establishing the existence of a model structure, we
construct functorial simplicial resolutions of algebras in
\Cref{sec:simplicial-resolutions} which are used in the construction of the 
$T$-algebra spectral sequence.

Here, we choose to work in the context of simplicial model categories. A
disadvantage of this approach is that some of our assumptions---most
notably the existence of colimits/limits and the standard issues concerning
cofibrancy and fibrancy---should not be strictly
necessary (see for example \cite[\S~6.2]{Lur12}). An advantage of this approach is that the theory is well-developed, well-understood, and relatively straightforward to apply to many
categories of interest. 

We have gathered the relevant results from the
literature in the interest of having a single reference for determining
whether a category of $T$-algebras admits a simplicial model structure.
The background material for this section can be found in
\cite{Qui67,Hov99,Hir03} or the appendices of \cite{Lur09}.

\subsection{Model structures on \texorpdfstring{$T$}{T}-algebras}
\label{sec:model-structures-on-t-algebras}
\completeblockP{}
Let $\cC$ be a simplicial model category and $T$ a simplicial monad acting on $\cC$.
We will now recall conditions which guarantee the simplicial structure on $\cC_T$ is
part of a simplicial model structure \cite{Qui67}. Such model
categories $\cM$ satisfy the following two equivalent forms of
Quillen's corner axiom.

\begin{itemize}
  \item[SM7:] Given any cofibration $f\in \sSet(K, L)$ and fibration
  $g\in  \cM(A, B)$ , the induced morphism
  \[ A^L\longrightarrow A^K\times_{B^K}B^L \]
  is a fibration, which is a weak equivalence if either $f$ or $g$ is.

  \item[SM7a:] Given cofibrations $f\in \sSet(K, L)$ and 
  $g\in  \cM(A, B)$, the induced morphism
\[K\otimes B\coprod _{K\otimes A} L\otimes A\longrightarrow L\otimes B\]
  is a cofibration, which is a weak equivalence if either $f$ or $g$ is.
\end{itemize}

\completeblockQ{}
\begin{defn}\label{def:quillen}
  Let $\cC$ be a (simplicial) model category.
  A (simplicial) monad $T$ acting on $\cC$ is \emph{(simplicial) Quillen} if 
  \begin{hypothenumerate}
  \item $\cC_T$ has a (simplicial) model structure such that the
    forgetful functor $U\colon \cC_T\rightarrow \cC$ is a (simplicial)
    right Quillen functor.
  \item A map $f$ of $T$-algebras is a weak equivalence if and only if
    $Uf$ is a weak equivalence.
  \end{hypothenumerate}
\end{defn}

A convenient way to show that $T$ is (simplicial) Quillen is to
induce a (simplicial)
model structure on $\cC_T$ via $F_T$. We can do this if $\cC$ is a
cofibrantly generated (simplicial) model category and $T$ satisfies some mild hypotheses.  In this
case $\cC$ has sets of generating cofibrations $I$ and acyclic
cofibrations $J$ which are used to detect acyclic fibrations and 
fibrations respectively. These sets of maps satisfy smallness hypotheses which are
used to apply Quillen's small object argument and prove the lifting
axioms.

\completeblockR{}

Suppose that $\cC$ is a model category and a functor 
\[
U\colon \cD\rightarrow \cC 
\] 
admits a left adjoint. Then we say that $U$ \emph{right induces} a
model structure on $\cD$ if $\cD$ admits a model structure such that a
map $f$ is a fibration (resp.~weak equivalence) if and only if $Uf$ is
a fibration (resp.~weak equivalence).

\begin{thm}[{Cf.~\cite[App.~A Thm.~1.4]{Sch07untitled}}]\label{prop:model-cat-algebras}
  Suppose that $\cC$ is a cofibrantly generated simplicial model
  category with generating (acyclic) cofibrations $I$ (resp.~$J$) and
  $T= U F_T$ is a simplicial monad acting on $\cC$ satisfying
  \Cref{prop:bicomplete-cat-algebras}.
  
  If the domains of $F_T I$ (resp.~$F_T J$) are small relative to $F_T
  I$-cells (resp.~$F_T J$-cells) and applying $U$ to any $F_T J$-cell
  complex yields a weak equivalence in $\cC$ then $U$ right induces a
  cofibrantly generated simplicial model category structure  on $\cC_T$.
\end{thm}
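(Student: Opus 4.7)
The plan is to apply the standard recognition theorem for cofibrantly generated model categories \cite[Thm.~2.1.19]{Hov99} to $\cC_T$, taking $F_T I$ as the generating cofibrations, $F_T J$ as the generating acyclic cofibrations, and $\cW_T := \{f \in \cC_T : Uf \in \cW\}$ as the class of weak equivalences. The 2-out-of-3 property and closure under retracts for $\cW_T$ are inherited from the corresponding properties of $\cW$ by functoriality of $U$, and the required smallness of domains of $F_T I$ and $F_T J$ relative to their respective cells is among the explicit hypotheses.

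The next step is to use the adjunction $F_T \dashv U$ to identify injectives via the lifting property: a map $f$ in $\cC_T$ lies in $F_T I$-inj precisely when $Uf$ has the right lifting property against all maps in $I$, i.e., iff $Uf$ is an acyclic fibration in $\cC$; and similarly $f \in F_T J$-inj iff $Uf$ is a fibration in $\cC$. From these characterizations, the containments $F_T I\text{-inj} \subset \cW_T \cap F_T J\text{-inj}$ and $\cW_T \cap F_T J\text{-inj} \subset F_T I\text{-inj}$ are immediate. Likewise $F_T I\text{-inj} \subset F_T J\text{-inj}$, so taking left orthogonals yields $F_T J\text{-cof} \subset F_T I\text{-cof}$ and in particular $F_T J\text{-cell} \subset F_T I\text{-cof}$.

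The remaining containment $F_T J\text{-cell} \subset \cW_T$ is precisely the transfer hypothesis of the theorem---that applying $U$ to any $F_T J$-cell complex is a weak equivalence in $\cC$. This is the main (and only non-formal) obstacle; in concrete applications it is typically verified by a path-object argument, by exhibiting fibrancy of all $T$-algebras, or by directly analyzing the effect of $U$ on pushouts of $F_T J$. Combined with \Cref{prop:bicomplete-cat-algebras}, which supplies bicompleteness of $\cC_T$, all conditions of \cite[Thm.~2.1.19]{Hov99} are met. By the adjunction descriptions, $U$ automatically preserves fibrations and acyclic fibrations, so $U$ is right Quillen and condition (iv) of \Cref{def:quillen} is built into the construction.

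Finally, to establish the simplicial structure I would appeal to the fact that, by \Cref{prop:bicomplete-cat-algebras}, $U$ creates cotensors and limits. Hence for a cofibration $f \colon K \to L$ in $\sSet$ and a fibration $g \colon A \to B$ in $\cC_T$, the corner map
\[
A^L \longrightarrow A^K \times_{B^K} B^L
\]
is created by $U$ from the analogous corner map in $\cC$. Since $U$ detects both fibrations and weak equivalences, axiom SM7 in $\cC_T$ reduces at once to SM7 in $\cC$, completing the verification that the transferred structure is simplicial and that $T$ is Quillen in the sense of \Cref{def:quillen}.
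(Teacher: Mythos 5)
Your proposal is correct and follows essentially the same route as the paper's proof: both apply the recognition theorem \cite[Thm.~2.1.19]{Hov99} with generating sets $F_TI$, $F_TJ$ and weak equivalences $U^{-1}(\cW)$, identify the injective classes via the adjunction $F_T \dashv U$, invoke the transfer hypothesis for $F_TJ\text{-cell} \subset \cW_T$, and reduce SM7 to $\cC$ using that $U$ creates cotensors and detects fibrations and weak equivalences. The only cosmetic difference is that you obtain $F_TJ\text{-cell} \subset F_TI\text{-cof}$ by formal closure of left orthogonal classes, where the paper constructs the lifts by an explicit transfinite induction.
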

\completeblockS{} 

\begin{remark}\label{rem:model-category-extra-conditions}
  In practice, checking the smallness conditions is relatively easy. In
  fact, it is automatic when the underlying categories are locally
  presentable. So most of the work required to apply
  \Cref{prop:model-cat-algebras} involves checking that applying $U$ to
  an $F_T J$-cell complex yields a weak equivalence. This can be verified (see \cite[Lem.~B2]{Sch99})
  by showing the following two properties
  are satisfied.
  \begin{enumerate}
    \item  There is a `fibrant replacement' functor $Q\colon
    \cC_T\rightarrow \cC_T$ and a natural transformation $\Id\rightarrow Q$
    such that for all $X\in\cC_T$, the natural map $UX\rightarrow UQX$ is a
    fibrant replacement.
    \item If $UX$ is fibrant then applying $U$ to the canonical
    factorization $ X\rightarrow X^{\Delta^1}\rightarrow X^{\partial
    \Delta^1}\cong X\times X $ of the diagonal yields a weak equivalence
    followed by a fibration.
  \end{enumerate}
  \longsaveblockT{}
  In our setup,	the second property follows from the fact that $U$ preserves cotensors and $\cC$ is a simplicial model category. To verify the first property one
  can sometimes show that the fibrant replacement functor $\cC$ lifts to
  an endomorphism of $\cC_T$. This is automatic if every object is fibrant
  in $\cC$. Since the two fibrant replacement functors
  $\Ex^{\infty}$ and $\Sing_{*}|-|$ on simplicial sets are product
  preserving we can use either of them as fibrant replacement functors
  for the category of simplicial $T$-algebras associated to a (graded) theory.
\end{remark}

\begin{prop}[{Cf.~\cite[Thm.~3.1]{Sch01b}}]
\label{prop:algebraic-theories-are-quillen}
  If $T$ is the monad associated to a (graded) algebraic theory on
  simplicial sets (such as, e.g., those in \Cref{ex:simplicial-theories} or
  \Cref{rmk:operads-in-sSet-is-algebraic}), then $T$ is simplicial Quillen.
\end{prop}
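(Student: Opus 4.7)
The plan is to apply \Cref{prop:model-cat-algebras} to the forgetful functor $U \cn \sSet^{S}_{T} \to \sSet^{S}$, where $\sSet^{S}$ carries the product of the standard Quillen model structures, right-inducing a cofibrantly generated simplicial model structure on $\sSet^{S}_{T}$. First I would assemble the standing hypotheses: by \Cref{prop:algebraic-theories-are-simplicial-and-bicomplete} the monad $T$ preserves reflexive coequalizers and filtered colimits, so \Cref{prop:bicomplete-cat-algebras} applies and gives a bicomplete simplicial structure on $\sSet^{S}_{T}$ in which $U$ creates limits and cotensors. Local presentability of $\sSet^{S}_{T}$, inherited via the three-way correspondence in \Cref{rem:three-way-correspondence}, makes the smallness conditions on the domains of $F_T I$ and $F_T J$ automatic.

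The heart of the argument is to show that $U$ sends every $F_T J$-cell complex to a weak equivalence, which I would verify via the two criteria from \Cref{rem:model-category-extra-conditions}. The path-object criterion follows from the fact that $U$ creates cotensors: the canonical factorization $X \to X^{\Delta^1} \to X \times X$ in $\sSet^{S}_{T}$ forgets to the analogous factorization in $\sSet^{S}$, which is a weak equivalence followed by a fibration whenever $UX$ is fibrant, because $\sSet^{S}$ is a simplicial model category. For the fibrant replacement criterion I would lift a product-preserving fibrant replacement functor on $\sSet^{S}$, such as coordinate-wise $\Ex^{\infty}$ or $\mathrm{Sing}\,|-|$, to a functor on $\sSet^{S}_{T}$.

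The key observation that makes this lift possible is that a $T$-algebra, by the definition of the monad associated to a (graded) algebraic theory, is equivalently a product-preserving simplicial functor $A \cn \cT \to \sSet$. Post-composing $A$ with any product-preserving simplicial endofunctor $F$ of $\sSet$ again yields a product-preserving simplicial functor, hence a $T$-algebra; the unit $\mathrm{Id} \to F$ similarly post-composes to a natural transformation of $T$-algebras whose underlying map is the original fibrant replacement. With both criteria verified, \Cref{prop:model-cat-algebras} produces a cofibrantly generated simplicial model structure on $\sSet^{S}_{T}$ with $U$ right Quillen by construction and weak equivalences detected by $U$, establishing all four clauses of \Cref{def:quillen}. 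The only genuinely non-formal step is this compatibility of product-preserving endofunctors of $\sSet$ with the $T$-algebra structure; once that is in hand, the remaining verifications are routine bookkeeping.
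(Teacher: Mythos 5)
Your proposal is correct and follows essentially the same route the paper takes: the paper proves this proposition by combining \Cref{prop:model-cat-algebras} with the two criteria of \Cref{rem:model-category-extra-conditions}, noting that smallness is automatic by local presentability, that the path-object condition follows since $U$ creates cotensors, and that a product-preserving fibrant replacement functor such as $\Ex^{\infty}$ or $\Sing_{*}|-|$ lifts to $\cT$-models precisely because models are product-preserving functors. Your write-up simply makes explicit the lifting step that the paper leaves as a one-line remark.
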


\begin{prop}\label{prop:abelianization-adjunction-model-cats}
 Let $\cT$ be a graded theory and $X$ a model in $\sSet$ for this
 theory.  Let $B$ be the category of $\cT$-models over $X$ in
 $\sSet$.  Then there is an $S$-graded theory whose models in $\sSet$,
 which we will denote by $A$, is equivalent to the category of abelian
 group objects in $B$.

 These two categories are monadic over $\sSet^S$ with associated
 monads $T_{A}$ and $T_{B}$ and the forgetful functor 
 \[ 
  \iota\colon\sSet^{S}_{T_{A}}\rightarrow \sSet^{S}_{T_{B}} 
 \]
 is monadic with left adjoint $\textrm{Ab}$. Its associated
 monad is simplicial Quillen.
\end{prop}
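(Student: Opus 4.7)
The plan is to bootstrap from \Cref{prop:abelianization-adjunction}, which handles the $\Set$-valued case, and then invoke \Cref{prop:algebraic-theories-are-quillen} to promote the resulting adjunctions to the Quillen setting.

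First I would construct the $S$-graded simplicial theories. Since $X \in \model{\sSet}{\cT}$, the over-category $B = (\model{\sSet}{\cT})_{\downarrow X}$ is equivalent to $\model{\sSet}{\cT \downarrow X}$ for an $S$-graded simplicial theory $\cT \downarrow X$ constructed exactly as in the proof of \Cref{prop:abelianization-adjunction}; the simplicial enrichment is inherited because $X$ is itself simplicial. Borceux's theorem \cite[Thm.~3.11.3]{Bor94a} on abelian group objects in categories of models, together with its $S$-graded extension in \cite{Bla08}, passes to the simplicially enriched context mutatis mutandis, since every ingredient (free abelian group object, product-preserving functors, abelian internal operations) is constructed from finite-product-preserving functorial operations. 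This produces an $S$-graded simplicial theory $\cT_A$ whose $\sSet$-valued models $A := \model{\sSet}{\cT_A}$ are precisely the abelian group objects in $B$.

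Next I would identify the monads. By \Cref{prop:algebraic-theories-are-simplicial-and-bicomplete} and the correspondence recalled in \Cref{rem:three-way-correspondence}, each of $A$ and $B$ is monadic over $\sSet^S$ via evaluation at the generators; let $T_A$ and $T_B$ denote the associated simplicial monads. The abelianization adjunction from \Cref{prop:abelianization-adjunction} then transports verbatim to the simplicial setting, giving a monadic inclusion $\iota \cn \sSet^S_{T_A} \hookrightarrow \sSet^S_{T_B}$ with left adjoint $\textrm{Ab}$.

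Finally I would verify the Quillen condition. By \Cref{prop:algebraic-theories-are-quillen}, both $T_A$ and $T_B$ are Quillen, so $\sSet^S_{T_A}$ and $\sSet^S_{T_B}$ carry simplicial model structures in which fibrations and weak equivalences are created by the forgetful functors to $\sSet^S$. Since the composite of $\iota$ with the forgetful functor $\sSet^S_{T_B} \to \sSet^S$ is the forgetful functor $\sSet^S_{T_A} \to \sSet^S$, the inclusion $\iota$ preserves and reflects fibrations and weak equivalences; being simplicial and a right adjoint, it is a simplicial right Quillen functor. By the monadicity established above, the category of algebras in $\sSet^S_{T_B}$ for the monad $\iota \circ \textrm{Ab}$ is $\sSet^S_{T_A}$, and a map of such algebras is a weak equivalence iff its underlying map in $\sSet^S_{T_B}$ is. All four clauses of \Cref{def:quillen} are therefore satisfied, so the associated monad is Quillen.

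The main obstacle is the first step: justifying that Borceux's abelianization theorem extends from $\Set$-valued models of an ungraded theory to the simplicial, $S$-graded, over-category situation. The $S$-graded extension is already handled by \cite{Bla08}; the simplicial enhancement is then automatic because a simplicial model of a simplicial theory is, after straightening, a simplicial object in models of its underlying discrete theory, so Borceux's construction of the abelianization theory applies levelwise and assembles into an $S$-graded simplicial theory.
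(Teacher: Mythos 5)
Your proposal is correct and follows essentially the same route as the paper: transport the argument of \Cref{prop:abelianization-adjunction} (Borceux's abelianization theorem plus Blanc's graded extension) to the simplicial setting, and then verify the Quillen condition by observing that fibrations and weak equivalences in both categories of algebras are created in $\sSet^S$ and that the forgetful functor $\sSet^S_{T_A}\rightarrow\sSet^S$ factors through $\iota$. Your write-up simply spells out in more detail what the paper's terse proof leaves implicit.
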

\begin{proof}
  The argument from \Cref{prop:abelianization-adjunction} proves the claims about the underlying categories. We just need to check that $\iota$ is a right Quillen functor (it is obviously simplicial). 
  In both categories a map is a weak equivalence (resp.~fibration) if
  and only if it is a weak equivalence (resp.~fibration) in $\sSet^S$.
  Since the forgetful functor $\sSet^S_{T_A}$ to $\sSet^S$ factors
  through $\iota$ the result follows.
\end{proof}

\begin{defn}\label{def:homotopy-classes-of-maps}
  Given a simplicial model category $\cC$ we define the derived mapping
  space 
  \[
  \cC^{d}(X,Y)= \cC(X^{c}, Y^{f})
  \] 
  where $X^{c}$ is a cofibrant replacement of $X$ and $Y^{f}$
  is a fibrant replacement of $Y$. We now define
  \[ 
  \ho\cC(X,Y)=\pi_0 \cC^d(X,Y). 
  \]
\end{defn}

It follows easily from Axiom [SM7] that the derived mapping space is
well-defined up to weak equivalence and therefore $\ho\cC(X,Y)$ is
well-defined. That this definition agrees with other constructions of
the set of homotopy classes of morphisms, and that these hom-sets
assemble into a homotopy category $\ho\cC$ can be found in
\cite[Prop.~II.3.10 + \S~II.3]{GoJ99}.

\begin{prop}\label{prop:homotopy-monad}
  Suppose that the forgetful functor $U\colon\cC_T\rightarrow \cC$ is a
  simplicial Quillen right adjoint. Then the monad $T$ induces a monad $\h T$ on 
  $\ho \cC$ such that the forgetful functor $\ho (\cC_T)\rightarrow
  \ho \cC $ factors through $(\ho \cC)_{\h T}$.
\end{prop}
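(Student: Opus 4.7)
The plan is to pass the Quillen adjunction $F_T\dashv U$ to a derived adjunction on homotopy categories and then invoke the standard fact that any adjunction yields a monad on its source together with a canonical factorization of its right adjoint through the Eilenberg--Moore category.

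First, by \Cref{def:quillen}, $U$ is a simplicial right Quillen functor, so its left adjoint $F_T$ is left Quillen. Quillen's derived functor theorem supplies total derived functors $\mathbf{L} F_T\colon \ho\cC\to \ho\cC_T$ and $\mathbf{R} U\colon \ho\cC_T\to \ho\cC$ together with an adjunction $\mathbf{L} F_T\dashv \mathbf{R} U$ whose unit $\bar\eta$ and counit $\bar\varepsilon$ are induced from those of $F_T\dashv U$ after suitable cofibrant/fibrant replacement. Because $U$ reflects weak equivalences, on fibrant objects $\mathbf{R} U$ is computed by simply applying $U$, so the functor $\mathbf{R} U\colon \ho(\cC_T)\to\ho\cC$ is the forgetful functor in the statement.

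Next, define $\h T := \mathbf{R} U \circ \mathbf{L} F_T$ as an endofunctor of $\ho\cC$, equipped with unit $\bar\eta\colon \Id\Rightarrow \h T$ and with multiplication
$$\bar\mu := \mathbf{R} U\, \bar\varepsilon\, \mathbf{L} F_T \colon \h T\h T \Longrightarrow \h T.$$
The associativity and unit axioms follow from the triangle identities for the derived adjunction, exactly as in the general fact that any adjunction produces a monad on its source. For the factorization, given a $T$-algebra $X$, the counit $\varepsilon_X\colon F_T UX\to X$ is a morphism in $\cC_T$ and descends to $\ho\cC_T$; applying $\mathbf{R} U$ gives
$$\mathbf{R} U\,\mathbf{L} F_T\,\mathbf{R} U X \longrightarrow \mathbf{R} U X,$$
i.e.\ an $\h T$-action map on $\mathbf{R} U X$. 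Its compatibility with $\bar\eta$ and $\bar\mu$ reduces once more to the triangle identities, and naturality of $\varepsilon$ in $X$ shows that morphisms of $T$-algebras become morphisms of $\h T$-algebras. This produces a lift $\ho(\cC_T)\to (\ho\cC)_{\h T}$ whose composite with the forgetful functor $(\ho\cC)_{\h T}\to\ho\cC$ is $\mathbf{R} U$, as required.

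The only non-formal point is the coherence of the derived unit and counit: the composite $\mathbf{R} U\,\mathbf{L} F_T$ is a priori defined only up to canonical natural isomorphism, and one must verify that the proposed monad structure is independent of the choice of (co)fibrant replacements. This is the universal property of the total derived functor of a Quillen adjunction, and is the single technical input beyond formal adjoint/monad yoga; once it is invoked, everything else is standard monad theory transported across the derived adjunction.
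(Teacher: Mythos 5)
Your proposal is correct and follows essentially the same route as the paper, which simply observes that a Quillen adjunction descends to an adjunction on homotopy categories, that the composite of the derived adjoints is a monad on $\ho\cC$, and that the derived right adjoint always factors through the category of algebras over that monad. You have merely written out the standard adjunction-yields-monad and Eilenberg--Moore factorization details that the paper leaves implicit.
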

\begin{proof}
  Quillen adjoints induce adjoints between the homotopy categories and
  consequently a monad action on $\ho \cC$ given by the composite. The
  right adjoint between the homotopy categories always lands in the
  category of algebras over this monad.
\end{proof}

\begin{defn}
  \label{def:andre-quillen-cohomology}
  Let $\cT$ be an $S$-graded theory and $X$ a $\cT$-model in $\sSet$.
  Let  \[ 
  \iota \colon \sSet^{S}_{(T{\downarrow X})_\ab}\rightarrow
  \sSet^{S}_{T{\downarrow X}} \] be the Quillen right adjoint from
  \Cref{prop:abelianization-adjunction-model-cats}.

  If $M$ is in $\Set^{S}_{(T{\downarrow
      X})_\ab}$ and
  $Y\in \sSet^{S}_{T{\downarrow X}}$, then the $s$th
  \emph{Quillen cohomology} of $Y$ with coefficients in $M$ is defined to
  be the group
   \[
   H^s_{Q,X}(Y; M) := 
   \ho \sSet^{S}_{T \downarrow X}( Y, \iota\Sigma^s M)
   \]
   where $\Sigma^s M$ is the $s$th suspension of $M$ (see \cite{Qui67}).
\end{defn}

\subsection{Simplicial resolutions of \texorpdfstring{$T$}{T}-algebras} \label{sec:simplicial-resolutions}

If $T$ is a monad acting on $\cC$, then applying $T$ levelwise to simplicial
objects in $\cC$ yields a monad acting on the category $s\cC$ of simplicial objects
which we will also denote by $T$. Now to construct a spectral sequence
computing the homotopy groups of the space $\cC_T(X,Y)$ we would like to
resolve $X$, meaning that we want to replace $X$ by a nice simplicial
$T$-algebra $X_{\bul}$ such that $\cC_T(|X_{\bul}|,Y) \simeq \cC_T^d(X,Y)$.
If $T$ is a monad acting on $\cC$, then applying $T$ levelwise to
simplicial objects in $\cC$ yields a monad, also denoted by $T$,
acting on the category of simplicial objects in $\cC$. 

\begin{defn}\label{def:bar-resolution}
  Suppose $X$ is a $T$-algebra in $\cC$.  The \emph{bar resolution}
  (also called the cotriple resolution) of $X$ is the simplicial $T$-algebra
  \[
  B_{\bul}X=B_{\bul}(F_T,T,UX)=B_{\bul}(F_TU, F_TU, X)
  \] 
  with $B_nX=(F_TU)^{n+1}X$ and face and degeneracy maps induced from the
  monad structure on $T=UF_T$ and the $T$-algebra structure on $X$.
  \completeblockU{}
\end{defn}

Note that the counit $F_TUX\rightarrow X$ extends to a map of
simplicial $T$-algebras
\begin{equation} \label{eqn:bar-resolution}
  \varepsilon\colon B_{\bul}X\rightarrow X
\end{equation}
where we regard the target as a constant simplicial object. 
By applying $U$ to \eqref{eqn:bar-resolution} we obtain a map
$\varepsilon\colon T^{\bul+1} UX\rightarrow UX$ in $s\cC$.
We also have a simplicial map $ e \colon UX\rightarrow T^{\bul+1}UX$ by iterating the unit map $UX\rightarrow TUX$.

\completeblockV{}

For a simplicial $T$-algebra $X$, there are two relevant geometric
realizations. One is realization in the category of $T$-algebras, and
another is realization in the underlying category.  We would like to
have conditions under which these two notions coincide, i.e., under
which $U$ commutes with geometric realizations. 

One such condition appears in \cite[Prop.~X.1.3.v]{EKMM97}: If $T$ is given by
a coend formula, then $U$ preserves geometric realizations. More
precisely, if $T$ is given by a formula such as the one in
\eqref{eqn:monad-associated-to-a-theory}, we will show that $T$ commutes
with geometric realization and then apply
\Cref{prop:two-different-realizations} to see that $U$ commutes with 
geometric realizations. 

\begin{prop}\label{prop:commuting-with-geometric-realization}
Let $\cC$ be a simplicial category and $F\colon \cC\times \cD\rightarrow \cC$ a functor such that for each $d\in \cD$, $F(-,d)$ commutes with geometric realizations in $\cC$. If $T$ is an endofunctor of $\cC$ of the form \[TX= \int^{d\in \cD} G(d)\otimes F(X,d) \] for some $G\colon \cD^\op\rightarrow \sSet$, then $T$ commutes with geometric realizations. 
\end{prop}

\begin{proof}
Since geometric realization is a coend, the result immediately follows from Fubini's theorem for iterated coends.
\end{proof}

\completeblockW{}

\begin{prop}\label{prop:two-different-realizations}
  Let $\cC$ be a bicomplete simplicial category and $T$ a simplicial
  monad acting on $\cC$.  Suppose that $\cC_T$ is a
  bicomplete simplicial category and that $T$ commutes
  with geometric realization.  
  If $X_\bul$ is a simplicial object in $\cC_T$, then $|UX_\bul|_{\cC}$ is a $T$-algebra
  and $U|X_\bul|_{\cC_T} \cong |UX_\bul|_{\cC}$ in $\cC_T$. So
  $U$ commutes with geometric realization if and only if $T$ does.
\end{prop}
\begin{proof}
Since $F_T$ is a left adjoint it commutes with geometric realization. So if $U$ commutes with geometric realization then so does $T=UF_T$. 

Now suppose that $T$ commutes with geometric realization. If we take the geometric realization in the category of $T$-algebras of a canonical presentation 
\begin{equation*}
  \xymatrix@C=11ex{
  F_TTUX_{\bul}\ar@<1ex>[r]\ar@<-1ex>[r] &
  F_TUX_{\bul}\ar[r]\ar@/_1.75pc/[l]_-{e} & X_{\bul},
  }
\end{equation*}
 and then apply $U$ we obtain the following commutative diagram with marked isomorphisms:
\begin{equation*}
  \xymatrix@C=11ex{
  U|F_TTUX_{\bul}|_{\cC_T}\ar[d]_{\cong}\ar@<1ex>[r]\ar@<-1ex>[r] &
  U|F_TUX_{\bul}|_{\cC_T}\ar[d]^{\cong}\ar[r]\ar@/_1.75pc/[l]_{e} &
  U|X_{\bul}|_{\cC_T}\ar@{=}[d]\\
  T|TUX_{\bul}|_{\cC}\ar[d]_{\cong}\ar@<1ex>[r]\ar@<-1ex>[r] &
  T|UX_{\bul}|_{\cC}\ar[d]^{\cong}\ar[r]\ar@/_1.75pc/[l]_{e} &
  U|X_{\bul}|_{\cC_T}\ar[d]^{\cong}\\
  |TTUX_{\bul}|_{\cC}\ar@<1ex>[r]\ar@<-1ex>[r] &
  |TUX_{\bul}|_{\cC}\ar[r]\ar@/_1.75pc/[l]_{e} &
  |UX_{\bul}|_{\cC}.
  }
\end{equation*}
The vertical isomorphisms between the first two rows follow from $F_T$ being a left adjoint. The next two vertical isomorphisms on the left follow from our assumption on $T$ and imply the desired lower right hand isomorphism. 
\end{proof}

One can interpret the following result as saying that the bar resolution
is indeed a resolution.
\begin{prop}\label{prop:correct-bar}
  Suppose $\cC$ is a simplicial model category and $T$ is a simplicial Quillen
  monad acting on $\cC$.  If $T$ (or equivalently $U$) commutes with geometric realization,
  then
  \[
  \varepsilon\colon |B_\bul X|_{\cC_T}\rightarrow X
  \] 
  is a weak equivalence of $T$-algebras.
\end{prop}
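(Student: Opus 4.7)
The plan is to reduce the statement to an assertion about geometric realization in the underlying category $\cC$, where a standard extra-degeneracy argument then produces the required weak equivalence.

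First I would observe that since $T$ is Quillen (\Cref{def:quillen}), a map in $\cC_T$ is a weak equivalence if and only if its image under $U$ is. It therefore suffices to show that
\[
U\varepsilon \colon U|B_\bul X|_{\cC_T} \longrightarrow UX
\]
is a weak equivalence in $\cC$. Because $T$ commutes with geometric realization by hypothesis, \Cref{prop:two-different-realizations} tells us that $U$ commutes with geometric realization, so that $U|B_\bul X|_{\cC_T} \cong |UB_\bul X|_{\cC} = |T^{\bul+1}UX|_{\cC}$, and under this identification $U\varepsilon$ is the realization of the augmentation $T^{\bul+1}UX \to UX$ in $\cC$ given degreewise by $\mu_X \circ T^{n}(\text{iterated counits})$.

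Next I would observe that the augmented simplicial object $T^{\bul+1}UX \to UX$ carries extra degeneracies $s_{-1} = e_{T^{\bul+1}UX}\colon T^{\bul+1}UX \to T^{\bul+2}UX$ induced by the unit $e\colon\Id \to T$ of the monad, together with an extra degeneracy $s_{-1}\colon UX \to TUX$ at the augmentation level. The compatibility of $e$ with $\mu$ (the unit axiom of the monad, together with the $T$-algebra axiom for $X$) gives the simplicial identities needed to make this a split augmented simplicial object in the sense of May.

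The standard extra-degeneracy argument then yields that for every object $W\in \cC$, the augmented simplicial set $\cC(W,T^{\bul+1}UX) \to \cC(W,UX)$ acquires extra degeneracies from those above and is hence a simplicial homotopy equivalence; equivalently, the augmentation induces a simplicial contraction in $\cC$ itself (built from the tensor with the standard contraction $\Delta^{\bul}\to \Delta^0$) exhibiting $|T^{\bul+1}UX|_{\cC} \to UX$ as a strong deformation retract. In particular it is a weak equivalence in the simplicial model category $\cC$, which is what we needed.

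The only technical point that requires real care is the fourth step: producing, from the abstract extra degeneracies, an explicit simplicial homotopy in $\cC$ whose realization contracts $|T^{\bul+1}UX|_{\cC}$ onto $UX$. This is classical (compare \cite[Prop.~9.8]{May72}), but it requires one to use the simplicial tensoring of $\cC$ to implement the contracting homotopy $\Delta^1 \otimes \Delta^n \to \Delta^n$ on each simplicial degree; everything else is a formal consequence of the monad axioms and the hypotheses on $T$.
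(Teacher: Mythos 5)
Your proposal is correct and follows essentially the same route as the paper: reduce via the Quillen hypothesis to a statement in $\cC$, use \Cref{prop:two-different-realizations} to identify $U|B_\bul X|_{\cC_T}$ with $|T^{\bul+1}UX|_\cC$, and then invoke the classical extra-degeneracy/strong-deformation-retract argument (the paper cites this as \Cref{lem:simp-retraction}, i.e.\ \cite[9.8]{May72}, rather than spelling it out). No gaps.
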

\begin{proof}
  Because $T$ is Quillen, it suffices to show that $U\varepsilon$ is
  a weak equivalence in $\cC$.
  This follows from \Cref{prop:two-different-realizations} and the
  following well known lemma.
\end{proof}

\begin{lem}{\cite[Prop.~9.8]{May72}}\label{lem:simp-retraction}
  Let $X \in \cC_T$. The maps $e$ and $\epz$ on realization
  \[
  UX
  \xrightarrow{e} 
  |T^{\bul +1} UX|_\cC
  \xrightarrow{\epz} 
  UX
  \]
  exhibit $UX$ as a strong deformation retract
  of $|T^{\bul +1} UX|_\cC$ in $\cC$.
\end{lem}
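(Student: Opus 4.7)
The plan is to show that, after applying $U$, the augmented simplicial object
$T^{\bul+1}UX \to UX$
in $\cC$ carries an extra degeneracy, and then to invoke the standard fact that realization of a simplicial object equipped with an extra degeneracy is canonically a strong deformation retract of the augmentation.

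First I would define the extra degeneracy $s_{-1} \cn T^{n+1}UX \to T^{n+2}UX$ to be the monad unit $e$ inserted at the outermost factor, with the convention $T^0 UX = UX$ so that $s_{-1} = e_{UX}$ at level $-1$. The augmentation at level $-1$ is $\epz = \mu_X$. The required simplicial identities
\[
d_0 s_{-1} = \Id, \qquad d_{i+1} s_{-1} = s_{-1} d_i, \qquad s_{j+1} s_{-1} = s_{-1} s_j
\]
then follow from a routine check: at level $-1$, the first identity is exactly the unit axiom $\mu_X \circ e_{UX} = \Id_{UX}$ for the $T$-algebra $X$; at higher levels it is the monad unit axiom $\mu_T \circ e_T = \Id_T$ applied to the outermost two factors. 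The remaining identities follow from naturality of $e$ together with associativity of $\mu_T$ and compatibility of $\mu_X$ with $\mu_T$. In particular, the equality $\epz \circ e = \Id_{UX}$ claimed in the statement is just the $T$-algebra unit axiom read at level $-1$.

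Next, I would invoke the classical construction (as in May, \emph{The Geometry of Iterated Loop Spaces}, \S 9) which produces, from an extra degeneracy, an explicit simplicial homotopy
\[
h_\bul \cn (T^{\bul+1}UX) \otimes \Delta^1_\bul \to T^{\bul+1}UX
\]
from $e \circ \epz$ to $\Id$ that is constant on the image of $e$; the formulas for $h_\bul$ on the non-degenerate simplices of $\Delta^1_\bul$ are assembled by repeated application of $s_{-1}$ followed by the face and degeneracy operations of $T^{\bul+1}UX$. Because $\cC$ is simplicial, the tensor $(-) \otimes \Delta^1$ preserves colimits in the first variable, and in particular commutes with the coend defining $|-|_\cC$; applying $|-|_\cC$ to $h_\bul$ therefore yields an honest $\cC$-homotopy
\[
|h_\bul|_\cC \cn |T^{\bul+1}UX|_\cC \otimes \Delta^1 \to |T^{\bul+1}UX|_\cC
\]
witnessing the strong deformation retract.

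The only substantive content is the bookkeeping that verifies the extra-degeneracy identities, but this is entirely formal once one unfolds the definitions. There is no genuine obstacle: the lemma is essentially the specialization to the bar construction of the classical principle that split augmented simplicial objects are simplicially contractible.
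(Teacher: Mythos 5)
Your argument is correct and coincides with the paper's: the paper supplies no independent proof of this lemma, deferring entirely to \cite[9.8]{May72}, and May's proof is exactly the extra-degeneracy contraction you describe (the unit $e$ inserted at the outermost factor as $s_{-1}$, with $d_0 s_{-1}=\Id$ coming from the monad and algebra unit axioms, and the resulting simplicial homotopy realized using that $-\otimes\Delta^1$ preserves the coend). The one point worth making explicit is that the homotopy is constant on the image of $e$ because $e_{TUX}\circ e_{UX}=T(e_{UX})\circ e_{UX}=s_0\circ e_{UX}$ is degenerate, which is what upgrades the retraction to a \emph{strong} deformation retract.
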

\saveblockJ{}

\subsection{Reedy cofibrant resolutions of \texorpdfstring{$T$}{T}-algebras}\label{sec:reedy}
To construct a spectral sequence using the bar resolution of $X$ we
require that this resolution is homotopically well behaved, that is we
will require it to be a Reedy cofibrant simplicial diagram as described below. If
we were to simply take a Reedy cofibrant replacement of the bar resolution we would no
longer be able to apply \Cref{prop:correct-bar} to deduce that the
geometric realization of our resolution has the correct homotopy type.
To show that a bar resolution is Reedy cofibrant we will apply a useful trick
(\Cref{prop:trick}) which makes use of a closely related \emph{almost
simplicial} diagram.

Let $\Delta_0$ be the subcategory of $\Delta$ with the same objects but
whose morphisms are those morphisms of linearly ordered sets which
preserve the minimal element. The restriction morphism \[
  \iota^*\colon \sC^{\Delta^{\op}}\rightarrow \sC^{\Delta^{\op}_{0}}
\]
takes a simplicial object and forgets the $d_0$ face maps (those
induced by injections missing the minimal element) while retaining all
of the other structure. So one can think of a $\Delta^{\op}_0$-shaped
diagram as almost a simplicial diagram; it simply lacks the $d_0$ face
maps.
\longsaveblockY{}

\begin{defn}\label{def:latching-obj}
  Let $X_\bul$ be in $\cC^{\Delta^{\op}}$
  (resp.~$\cC^{\Delta^{\op}_0}$).  The $n$th \emph{latching object} of
  $X_\bul$ is
  \[
  L_n (X_\bul) = \colim_{[n] \to [k]} X_k,
  \]
  where the colimit is indexed over the non-identity surjections in
  $\Delta$ (this is equal to the set of non-identity surjections in
  $\Delta_0$).
\end{defn}

The category $\Delta$ is the prototypical example of a Reedy category \cite[\S~15.1]{Hir03}. It is immediate from the definition of a Reedy category that the Reedy structure on $\Delta$ restricts to Reedy structure on $\Delta^\op$. This structure is used to make the following:
\begin{defn}
  Suppose that $\cC$ is a model category.  The \emph{Reedy} model structure
  on $\cC^{\Delta^{\op}}$ (resp.~$\cC^{\Delta_0^{\op}}$) is determined by 
  \begin{hypothenumerate}
    \item $f\colon X_{\bul}\rightarrow Y_{\bul}$ is a (Reedy) weak
    equivalence if $f_n\colon X_n\rightarrow Y_n$ is a
    weak equivalence in $\cC$ for all $n\geq 0$.

    \item $f\colon X_{\bul}\rightarrow Y_{\bul}$ is a (Reedy)
    cofibration if the induced map \[ X_n\coprod_{L_n X_{\bul}} L_n
    Y_{\bul }\rightarrow Y_n\] is a cofibration in $\cC$ for all $n\geq
    0$.
  \end{hypothenumerate}
\end{defn}

To show the bar resolution is Reedy cofibrant in particular cases we
will use the following trick:
\begin{prop}\label{prop:trick}
  Suppose $\cC$ and $\cD$ are model categories and $L\colon
  \cD\rightarrow \cC$ is a left Quillen functor. Let $X_{\bul}$ be a
  simplicial diagram in $\cC$ and $\iota^*X_{\bul}\in
  \cC^{\Delta^{\op}_0}$ its restriction. Suppose that there
  exists a Reedy cofibrant $\widetilde{X}_{\bul}\in
  \cD^{\Delta_{0}^{\op}}$ such that $L\widetilde{X}_{\bul}\cong
  \iota^*X_{\bul}$. Then $X_{\bul}$ is Reedy cofibrant.
\end{prop}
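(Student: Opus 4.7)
The plan is to reduce Reedy cofibrancy of $X_\bul$ to a condition which depends only on the restricted $\Delta_0^{\op}$-diagram $\iota^* X_\bul$, and then transport cofibrancy across $L$.

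The key observation is that the $n$th latching object is defined as a colimit indexed by non-identity surjections out of $[n]$ in $\Delta$. Every surjection in $\Delta$ preserves the minimal element, so this indexing category is a subcategory of $\Delta_0$ and in fact coincides with the analogous indexing category used to compute the $n$th latching object in $\cC^{\Delta_0^{\op}}$. The diagram defining $L_n X_\bul$ therefore involves only morphisms retained by $\iota^*$, so there is a canonical isomorphism
\[
L_n(X_\bul) \iso L_n(\iota^* X_\bul)
\]
under which the latching map $L_n(X_\bul) \to X_n$ agrees with the latching map of $\iota^* X_\bul$. Consequently $X_\bul$ is Reedy cofibrant in $\cC^{\Delta^{\op}}$ if and only if $\iota^* X_\bul$ is Reedy cofibrant in $\cC^{\Delta_0^{\op}}$.

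The second step is to show that $\iota^* X_\bul \iso L\widetilde X_\bul$ is Reedy cofibrant in $\cC^{\Delta_0^{\op}}$. Since $L$ is left Quillen it preserves colimits, so $L$ commutes with the formation of latching objects: $L(L_n \widetilde X_\bul) \iso L_n(L\widetilde X_\bul)$. By Reedy cofibrancy of $\widetilde X_\bul$, the latching map $L_n \widetilde X_\bul \to \widetilde X_n$ is a cofibration in $\cD$ between cofibrant objects; applying the left Quillen functor $L$ yields a cofibration $L_n(L\widetilde X_\bul) \to L\widetilde X_n$ in $\cC$. Transporting across the isomorphism $L\widetilde X_\bul \iso \iota^* X_\bul$ shows $\iota^* X_\bul$ is Reedy cofibrant.

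Combining the two steps, $X_\bul$ is Reedy cofibrant. I expect the only potentially subtle point is the first step, namely verifying carefully that the latching object is insensitive to forgetting $d_0$ face maps; once one notes that the indexing category for $L_n$ consists of surjections and that every surjection in $\Delta$ lies in $\Delta_0$, the rest of the argument is a formal consequence of $L$ being a left Quillen functor.
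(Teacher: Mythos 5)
Your proof is correct and follows essentially the same route as the paper's: both arguments observe that latching objects and maps are computed over surjections, which all lie in $\Delta_0$, so Reedy cofibrancy of $X_\bul$ reduces to that of $\iota^*X_\bul$, and then transport Reedy cofibrancy across the left Quillen functor $L$ using that it preserves colimits and cofibrations. The remark that the latching maps of $\widetilde{X}_\bul$ are cofibrations \emph{between cofibrant objects} is harmless but unnecessary, since a left Quillen functor preserves all cofibrations.
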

\begin{proof}
By definition $X_\bul$ is Reedy cofibrant if for each
non-negative $n$ the latching map 
\[
  L_n X_\bul \to X_n
\]
is a cofibration. Note that the latching object and map depend only on
the restriction of $X_\bul$ to the subcategory
$\Delta^{\op}_{\text{surj}}$, where $\Delta_{\text{surj}}$ consists of
all objects $[n]$ but only surjective maps.  In particular it suffices
to show $\iota^*X_{\bul}$ is Reedy cofibrant. Since $L$ is a Quillen left
adjoint it commutes with colimits and preserves cofibrations so it takes
the Reedy cofibrant $\widetilde{X}_{\bul}$ to the Reedy cofibrant
diagram $L\widetilde{X}_{\bul}$. Since being cofibrant is invariant
under isomorphism the result follows.
\end{proof}

For a $T$-algebra $X \in \sC_T$, we have a $\Delta_0^{\op}$-shaped diagram
\[
T^\bul UX \colon \Delta_0^{\op} \to \sC
\]
where
\[
(T^\bul UX)_n = T^n UX
\]
and the maps $(T^\bul UX)(s_i)$ and $(T^\bul UX)(d_i)$ are defined as in the
bar construction.  

\begin{defn}\label{def:resolvable}   
  Suppose that $T$ is a Quillen monad acting on $\cC$ and $X$ is a
  $T$-algebra. A \emph{bar cofibrant} replacement of $X$ is a
  cofibrant replacement $\widetilde{X}\rightarrow X$ in $T$-algebras
  such that $T^{\bul}U\widetilde{X}$ is Reedy cofibrant in
  $\cC^{\Delta^{\op}_0}$. We will say $X$ is \emph{resolvable} if it
  admits a bar cofibrant replacement.
\end{defn}

\begin{prop}\label{prop:bar-construction-is-reedy-cofib}
  Let $T$ be a Quillen monad acting on $\cC$ and $\widetilde{X}\rightarrow X$ a bar cofibrant replacement of a $T$-algebra $X$. Then the bar resolution $B_\bul \widetilde{X}$ is a Reedy cofibrant simplicial $T$-algebra and any two choices of bar cofibrant replacement yield weakly equivalent bar resolutions. 
\end{prop}
\begin{proof}
  By assumption $T^{\bul}U\widetilde{X}$ is Reedy cofibrant in
  $\cC^{\Delta^{\op}_0}$. By applying the left
  Quillen functor $F_T\colon \cC\rightarrow \cC_T$ levelwise to this
  diagram and using \Cref{prop:trick} we see that $B_{\bul}\widetilde{X}$ is Reedy cofibrant.

  Two different bar cofibrant resolutions $\widetilde{X}_1$ and $\widetilde{X}_2$ of $X$ are, in particular, cofibrant replacements for $X$ in $T$-algebras. It follows that there is a weak equivalence of $T$-algebras $f\colon \widetilde{X}_1\rightarrow \widetilde{X}_2$ which induces a map of bar resolutions. To see that the map is a levelwise weak equivalence we argue by induction. The weak equivalence in degree 0 is induced by applying $F_T$ to the weak equivalence $U\widetilde{X}_1\xrightarrow{Uf} U\widetilde{X}_2$. Since $\widetilde{X}_1$ and $\widetilde{X}_2$ are bar cofibrant this is a weak equivalence between two cofibrant objects of $\cC$. It follows that the induced map in degree 0 is a weak equivalence between two cofibrant $T$-algebras. The induction argument is similar. 
  \end{proof}

The remainder of this section is devoted to proving various technical results
which will assist in determining when a $T$-algebra is resolvable.

\subsubsection{Monads on diagrams of simplicial sets.}
\begin{lem}\label{lem:Delta0-Eilenberg-Zilber}
  Let $S$ be a set and $\cC = \sSet^{S}$ equipped with the product model
  structure.  Then any diagram $X_\bul \colon \Delta_0^{\op} \to \cC$ is
  Reedy cofibrant.
\end{lem}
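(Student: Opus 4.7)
The assertion reduces to a variant of the Eilenberg--Zilber lemma for $\Delta_0^{\op}$-shaped diagrams of sets. Since cofibrations in $\sSet^S$ under the product model structure are componentwise monomorphisms, and since both the latching object and the comparison map with $X_n$ are computed componentwise in the $S$-index and in each simplicial degree, it suffices to prove: for any $\Delta_0^{\op}$-diagram $Y_\bul$ in $\Set$, the latching map $L_n Y_\bul \to Y_n$ is injective.

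The key observation that makes this work in $\Delta_0$ is the following. For every order-preserving surjection $\sigma \cn [n] \twoheadrightarrow [k]$ in $\Delta$, the canonical minimal section $\iota_\sigma \cn [k] \to [n]$, defined by $\iota_\sigma(j) = \min\{i : \sigma(i) = j\}$, automatically satisfies $\iota_\sigma(0) = 0$ and hence lies in $\Delta_0$. Consequently, the pullback $\iota_\sigma^* \cn Y_n \to Y_k$ is defined even for a $\Delta_0^{\op}$-diagram, and it satisfies $\iota_\sigma^* \sigma^* = \Id_{Y_k}$. Furthermore, the correspondence $\sigma \mapsto \iota_\sigma$ is injective, since the fibers of $\sigma$ can be recovered as the half-open intervals between consecutive values of $\iota_\sigma$.

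With these sections in hand, I would establish by induction on $n$ that every $y \in Y_n$ has a unique representation $y = \sigma^* z$ with $\sigma$ a surjection and $z$ non-degenerate (not in the image of any non-identity surjection). Existence is routine. For uniqueness: given $y = \sigma^* z = \tau^* w$ with $z, w$ non-degenerate, applying $\iota_\sigma^*$ yields $z = (\tau \iota_\sigma)^* w$; the epi-mono factorization of $\tau \iota_\sigma$ combined with the non-degeneracy of $z$ forces $\tau \iota_\sigma$ to be an injection. By the symmetric argument, $\sigma$ and $\tau$ share a common codomain and both $\iota_\sigma, \iota_\tau$ are sections of both surjections; minimality then pins down $\iota_\sigma = \iota_\tau$, and hence $\sigma = \tau$ and $z = w$.

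Injectivity of the latching map is then a formal consequence: any two classes $(x_i, \sigma_i)$ in the latching colimit mapping to the same element of $Y_n$ must, after decomposing each $x_i$ into non-degenerate form, yield the same non-degenerate core, and so are identified in the colimit. The only real obstacle is the uniqueness step in the Eilenberg--Zilber lemma: the standard proof for simplicial sets uses $d_0$ face maps which are unavailable in $\Delta_0^{\op}$, but one can always replace them by the minimal sections $\iota_\sigma$, which happen to preserve the minimum.
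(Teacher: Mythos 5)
Your argument is correct, but it takes a genuinely different route from the paper's. The paper verifies that $\Delta_0$ satisfies the Eilenberg--Zilber axioms (EZ1)--(EZ2) of Bergner--Rezk and then invokes their general machinery: every Eilenberg--Zilber Reedy category is elegant, and for elegant Reedy categories the Reedy and injective (here, product) model structures on $\sSet^S$-valued diagrams coincide, so every diagram is Reedy cofibrant simply because every object of $\sSet^S$ is cofibrant. You instead reduce (correctly, since cofibrations and latching objects in the product structure are computed componentwise and degreewise) to $\Delta_0^{\op}$-diagrams of sets and prove the Eilenberg--Zilber decomposition there directly, deducing injectivity of the latching maps by hand. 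The combinatorial engine is the same in both proofs: the minimal section $\iota_\sigma(j)=\min\sigma^{-1}(j)$ of an order-preserving surjection automatically fixes $0$ and hence lies in $\Delta_0$ (the paper's (EZ1)), and minimal sections separate distinct surjections (the paper's (EZ2)). Your uniqueness step is stated a little loosely but does close up: once you know $\iota_\tau$ is a section of $\sigma$ and $\iota_\sigma$ is a section of $\tau$, pointwise minimality gives $\iota_\sigma \le \iota_\tau \le \iota_\sigma$, hence $\iota_\sigma = \iota_\tau$ and, by your injectivity of $\sigma \mapsto \iota_\sigma$, $\sigma = \tau$. What your approach buys is self-containedness---no appeal to elegance or to the comparison of Reedy and injective structures---at the cost of redoing the classical Eilenberg--Zilber argument in the $\Delta_0$ setting; what the paper's approach buys is brevity and a reusable abstract criterion.
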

\begin{proof}
  We observe that $\Delta_0$ is \emph{Eilenberg-Zilber}
  \cite[Def.~6]{BeR13}, i.e., $\Delta_0$ satisfies:
  \begin{enumerate}
  \item[(EZ1)] For all surjections $\sigma \colon [n+m] \to [n]$ in
    $\Delta_0$, the set of sections
    \[
    \Gamma(\sigma) = \{\tau \in \Delta_0 \;\big|\; \sigma\tau =
    \text{id}_{[n]} \}
    \]
    is nonempty.
  \item[(EZ2)] For any two distinct surjections $\sigma_1, \sigma_2 \colon
    [n+m] \to [n]$, the sets of sections $\Gamma(\sigma_1)$ and
    $\Gamma(\sigma_2)$ are distinct.
  \end{enumerate}
  The two conditions are verified as follows: For any surjection $\sigma$ in
  $\Delta_0$ consider the section $\sigma'(j) = \min (\sigma^{-1}(j))$.
  It is immediate that $\sigma'$ is also in $\Delta_0$ and that
  $\sigma_1' \neq \sigma_2'$ if $\sigma_1 \neq \sigma_2$.

  By \cite[Prop.~4.1]{BeR13}, every Eilenberg-Zilber Reedy category is
  \emph{elegant} (\cite[Def.~4]{BeR13}) and by
  \cite[Prop.~3.10]{BeR13} the product and Reedy model structures agree on
  categories of elegant diagrams in $\cC = \sSet^S$.  In particular,
  the object $X_\bul$ will be Reedy cofibrant because the cofibrations
  in the product model structure are the levelwise cofibrations and
  every simplicial set is cofibrant.
\end{proof}

\begin{prop}\label{prop:sset-resolvable}
  If $T$ is a simplicial Quillen monad acting on $\sSet^{S}$,
  then any $T$-algebra admits a bar cofibrant replacement.
\end{prop}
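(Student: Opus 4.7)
The plan is to observe that this proposition is an essentially immediate corollary of \Cref{lem:Delta0-Eilenberg-Zilber}: we simply take $\widetilde{X} = X$ as the replacement, so the weak equivalence $\widetilde{X} \to X$ is the identity and there is nothing to verify on that side. All the real work then reduces to checking that $T^{\bul}UX$ is Reedy cofibrant in $(\sSet^S)^{\Delta_0^{\op}}$.

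For that, I would invoke \Cref{lem:Delta0-Eilenberg-Zilber} directly: it asserts that \emph{every} $\Delta_0^{\op}$-shaped diagram in $\sSet^S$ (equipped with the product model structure) is already Reedy cofibrant. In particular, the specific diagram $T^{\bul}UX \colon \Delta_0^{\op} \to \sSet^S$ coming from the bar construction (with face maps $d_i$ for $i \ge 1$ and degeneracies $s_i$, forgetting $d_0$) qualifies, regardless of what the Quillen monad $T$ is. So the Reedy cofibrancy condition in \Cref{def:resolvable} is satisfied for $\widetilde{X} = X$.

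The only mildly delicate point worth mentioning is to confirm that the ambient model structure used in \Cref{def:resolvable} agrees with the product model structure on $\sSet^S$ referenced in \Cref{lem:Delta0-Eilenberg-Zilber}; this is the standard simplicial model structure on a product category, in which cofibrations are levelwise monomorphisms and every object is cofibrant, which is precisely the hypothesis feeding the elegant-Reedy comparison cited in the proof of that lemma. Once that is noted, the proof is complete in a single line: set $\widetilde{X} = X$ and apply \Cref{lem:Delta0-Eilenberg-Zilber}. There is no serious obstacle here; the content of the statement lies entirely in the preceding lemma, and the purpose of this proposition is simply to package that content in the form needed for the hypotheses of \Cref{thm:main}.
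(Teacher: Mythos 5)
Your argument is exactly the paper's: the paper proves this proposition with the single sentence that it is an immediate application of \Cref{lem:Delta0-Eilenberg-Zilber}, i.e., take $\widetilde{X}=X$ and note that every $\Delta_0^{\op}$-diagram in $\sSet^S$ is Reedy cofibrant. Your additional remark about matching the product model structure is a reasonable sanity check but adds nothing beyond what the paper already assumes.
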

\begin{proof}
  By \Cref{lem:Delta0-Eilenberg-Zilber} any cofibrant replacement in $\sSet^{S}_{T}$ is bar cofibrant.
\end{proof}

\subsubsection{Cellular monads.}

\completeblockZ{}

\completeblockZA{}

\begin{prop}\label{prop:ekmm-argument-for-reedy-cofibrancy}
  Let $\cC$ be a cofibrantly generated model category in which relative
  cell complexes are monomorphisms and let
  $X_{\bul} \in \cC^{\Delta^{\op}_{0}}$ be a degreewise cellular
  diagram such that each degeneracy $s_i$ is a subcellular inclusion.
  Then the latching maps of $X_{\bul}$ are cellular inclusions
  and therefore $X_{\bul}$ is Reedy cofibrant.
\end{prop}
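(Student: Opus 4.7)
The plan is to filter the latching object $L_n X_\bul$ by ``partial latching objects'' and inductively show that each stage is obtained by a cellular pushout along the preceding one.

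For each $-1 \leq i \leq n-1$, let $s^i X_n \subseteq X_n$ denote the union of the images of the degeneracies $s_0, s_1, \ldots, s_i \colon X_{n-1} \to X_n$. Since each $s_j$ is by hypothesis a subcell inclusion and cellular attachments are monomorphisms, \cite[Prop.~10.6.10]{Hir03} identifies $s^i X_n$ with a well-defined subcell complex of $X_n$. We have $s^{-1} X_n = \emptyset$ and $s^{n-1} X_n = L_n X_\bul$ (the latter as a subobject of $X_n$, via the latching map).

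The central step is to prove, by induction on $i$, that the commutative square
\[
\xymatrix{
  s^{i-1} X_{n-1} \ar[d] \ar[r] & s^{i-1} X_n \ar[d] \\
  X_{n-1} \ar[r]_-{s_i} & s^i X_n
}
\]
is a pushout of subcell inclusions. First I would verify that the square is a pullback. The simplicial identities $d_{i+1} s_j = s_j d_i$ for $j < i$ together with $d_{i+1} s_i = \mathrm{id}$ show that $d_{i+1}$ restricts to a retraction of the top horizontal map and sends $s^{i-1} X_n$ into $s^{i-1} X_{n-1}$. This produces a candidate map from the actual pullback back to $s^{i-1} X_{n-1}$; using the monomorphism hypothesis one checks this is inverse to the canonical comparison, i.e.\ that the image of $s_i$ intersects $s^{i-1} X_n$ precisely in the image of $s^{i-1} X_{n-1}$.

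Once the diagram is identified as a pullback of subcell inclusions, \cite[Prop.~12.2.3]{Hir03} gives that it is automatically a pushout. Hence $s^i X_n$ is built from $s^{i-1} X_n$ by a cellular pushout along the cellular inclusion $s^{i-1} X_{n-1} \hookrightarrow X_{n-1}$. Inducting on $i$ yields that every $s^i X_n$ is a subcell complex of $X_n$, so in particular the latching map $L_n X_\bul = s^{n-1} X_n \hookrightarrow X_n$ is a subcellular inclusion and $X_\bul$ is Reedy cofibrant. The main obstacle is verifying that the pullback really models the combinatorial intersection of subcomplexes; this requires a transfinite induction along the cellular filtrations in the spirit of \cite[Prop.~10.6.11]{Hir03}, using the retraction $d_{i+1}$ to guarantee that any attaching cell lying in both subcomplexes factors through $s^{i-1} X_{n-1}$ at the appropriate filtration level.
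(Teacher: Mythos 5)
Your argument is correct and is essentially the paper's own proof: the paper likewise filters $L_nX_\bul$ by the partial latching objects $s^iX_n$ (well-defined unions of subcell complexes by \cite[Prop.~10.6.10]{Hir03}), shows inductively that the square relating $s^{i-1}X_{n-1}$, $s^{i-1}X_n$, $X_{n-1}$, and $s^iX_n$ is a pullback of subcell inclusions via the retraction $d_{i+1}$ and the monomorphism hypothesis, and then invokes \cite[Prop.~12.2.3]{Hir03} to upgrade it to a pushout, with the same transfinite-induction check in the spirit of \cite[Prop.~10.6.11]{Hir03} that the pullback models the combinatorial intersection.
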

\begin{proof}
  The proof of \cite[Thm.~X.2.7]{EKMM97} can be modified to show inductively that
  \cite[(X.2.5)]{EKMM97} is a pushout-pullback diagram of subcell
  complexes defined as unions of the subcell complexes given by the
  degeneracies. Such unions are well-defined because relative cell
  complexes are monomorphisms \cite[Prop.~10.6.10]{Hir03}.
\end{proof}

\begin{prop}\label{prop:cellular-resolvable}
  Let $T$ be a Quillen monad acting on a cofibrantly generated model category $\cC$.
  Suppose that relative cell complexes in $\cC$ are
  monomorphisms and that for any cellular object $M$, $TM$ is
  cellular and the natural unit map $M \rightarrow TM$ is a cellular
  inclusion.  If $\widetilde{X} \to X$ is a cofibrant replacement of a 
  $T$-algebra $X$, such that $U \widetilde{X}$ is cellular, then $\widetilde{X}$ is a bar cofibrant replacement of $X$.
\end{prop}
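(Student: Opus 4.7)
The plan is to apply \Cref{prop:ekmm-argument-for-reedy-cofibrancy} to the $\Delta_0^{\op}$-diagram $T^\bul U\widetilde X$. Since $X$ and $\widetilde X$ are weakly equivalent as $T$-algebras by hypothesis, Reedy cofibrancy of this diagram is exactly what is needed per \Cref{def:resolvable}. That proposition requires verifying: (a) each $T^n U\widetilde X$ is cellular, and (b) each degeneracy $s_i \colon T^n U\widetilde X \to T^{n+1} U\widetilde X$ is a subcellular inclusion. Condition (a) follows by a routine induction on $n$: the base case is the hypothesis that $U\widetilde X$ is cellular, and the inductive step uses that $T$ sends cellular objects to cellular objects.

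For condition (b), I will unwind the bar construction to identify the $i$-th degeneracy at level $n$ with the whiskering $T^i \eta_{T^{n-i} U\widetilde X}$, where $\eta \colon \mathrm{id}\to T$ is the unit, so this map is obtained by inserting the unit at the $i$-th of the $n+1$ positions. When $i=0$ the map is exactly $\eta_{T^n U\widetilde X}$, which is a subcellular inclusion directly by the unit hypothesis applied to the cellular object $T^n U\widetilde X$ from (a).

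The main obstacle will be handling the inner insertions $i>0$, where the map is no longer a unit at the outermost position but rather $T^i$ applied to one. My plan is to induct on $i$, reducing to the claim that $T$ sends the specific subcellular inclusions $\eta_N$ (for cellular $N$) to subcellular inclusions. Because $\cC$ is cofibrantly generated, $\eta_N \colon N \to TN$ presents $TN$ as $N$ together with attached cells (pushouts along generating cofibrations of $\cC$); I would then trace these pushout attachments through $T$, using compatibility of $T$ with the cellular structure together with the hypothesis that relative cell complexes are monomorphisms, to exhibit $T\eta_N$ as an analogous sequence of cell attachments. Iterating gives subcellular inclusions for all $T^i \eta_N$, at which point \Cref{prop:ekmm-argument-for-reedy-cofibrancy} yields Reedy cofibrancy of $T^\bul U\widetilde X$ and hence resolvability of $X$ with replacement $\widetilde X$.
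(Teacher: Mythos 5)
Your overall route is the paper's: its proof is a one-line reduction to \Cref{prop:ekmm-argument-for-reedy-cofibrancy}, and your conditions (a) and (b) are exactly the hypotheses of that proposition, with (a) handled by the same induction on $n$. The place where your proposal goes beyond the paper is also where it has a genuine gap. For the inner degeneracies $s_i = T^i\eta_{T^{n-i}U\widetilde X}$ with $i>0$ you propose to show that $T$ carries the cellular inclusion $\eta_N \colon N \to TN$ to a cellular inclusion by ``tracing the pushout attachments through $T$.'' Nothing in the hypotheses of the proposition lets you do this: $T$ is not assumed to preserve pushouts along generating cofibrations, nor the transfinite compositions out of which the relative cell complex $N \to TN$ is built, so applying $T$ to a presentation of $TN$ as $N$ with cells attached does not yield a presentation of $T^2N$ as $TN$ with cells attached. (The colimits this paper does assume $T$ to preserve elsewhere---geometric realizations, reflexive coequalizers, filtered colimits---are not the ones needed here, and naturality of the unit only tells you that $T\eta_N$ and $\eta_{TN}$ are both sections of $\mu_N$, not that one inherits cellularity from the other.) An induction on $i$ founded on this step therefore does not get off the ground.

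To be fair, the paper's own proof simply declares the application of \Cref{prop:ekmm-argument-for-reedy-cofibrancy} immediate, i.e.\ it implicitly reads the unit hypothesis as guaranteeing that \emph{every} degeneracy of $T^{\bul}U\widetilde X$, not just $s_0$, is a subcellular inclusion. That reading is harmless in the intended applications (\Cref{prop:cofibrant-operad} and the $G$-spectra example), where $T$ is built from smash products, wedges, and tensors that visibly carry subcellular inclusions to subcellular inclusions, so that each $T^i\eta_N$ is again a subcellular inclusion. If you want a proof at the stated level of generality, you should either strengthen the hypothesis to ``$T$ preserves subcellular inclusions'' (or at least ``$T^i$ applied to the unit on a cellular object is a subcellular inclusion for all $i$'') or verify that property directly for the monad at hand; it cannot be deduced formally from the hypotheses as written, and your proposed mechanism for deducing it is the step that fails.
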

\begin{proof}
  This is an immediate application of \Cref{prop:ekmm-argument-for-reedy-cofibrancy}.
\end{proof}

\subsubsection{Monads whose unit maps are inclusions of
summands.}

\begin{prop}\label{prop:summand-inclusion}
  Let $\cC$ be a pointed model category and
  let $X_{\bul} \in \cC^{\Delta^{\op}_{0}}$ be a diagram 
  such that $X_0$ is cofibrant and each degeneracy $s_i$
  is a cofibration and the inclusion of a summand.  Then the latching maps
  of $X_{\bul}$ are cofibrations and summand inclusions, and therefore
  $X_{\bul}$ is Reedy cofibrant.
\end{prop}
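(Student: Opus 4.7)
The plan is to mimic the argument of \Cref{prop:ekmm-argument-for-reedy-cofibrancy} almost verbatim, replacing ``cellular inclusion'' with ``cofibration and summand inclusion'' throughout. The crucial pushout decomposition of the partial latching objects used there is purely combinatorial and does not depend on the cellular structure; what we need instead is that the class of maps which are both cofibrations and summand inclusions is closed under the relevant pushouts.

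First I would record the following closure lemma: if $j\colon A\to B$ and $k\colon A\to C$ are both cofibrations that are inclusions of summands, then in the pushout
\[
\xymatrix{
  A \ar[r]^{j}\ar[d]_{k} & B \ar[d]\\
  C \ar[r] & B\cup_A C
}
\]
both maps out of $B$ and $C$ are again cofibrations and summand inclusions. Cofibrancy is immediate. For the summand claim, write $B \iso A\vee B'$ and $C \iso A\vee C'$ with respect to the chosen retractions; distributivity of coproducts over pushouts (in any pointed category) gives $B\cup_A C \iso A\vee B'\vee C'$, and the two maps are inclusions of the first two, respectively the first and third, summands. This lemma applies equally to pushouts formed inside a pointed model category, which is the only input we need.

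Next I would run the induction on $n$ exactly as in \Cref{prop:ekmm-argument-for-reedy-cofibrancy}. The base case $n=0$ is trivial since $L_0X_\bul$ is the zero object and $X_0$ is cofibrant by hypothesis (and $0\to X_0$ is obviously a summand inclusion). For the inductive step, I would define the partial latching objects $L_n^{(i)}X_\bul$ via the images of $s_0,\ldots,s_i$, and invoke the same pushout square
\[
\xymatrix{
  L_{n-1}^{(i-1)}X_\bul \ar[r]\ar[d] & L_n^{(i-1)}X_\bul \ar[d]\\
  X_{n-1} \ar[r]_-{s_i} & L_n^{(i)}X_\bul
}
\]
which is established in the proof of \Cref{prop:ekmm-argument-for-reedy-cofibrancy} (and ultimately in \cite[X.2.5]{EKMM97}). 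That proof uses that relative cell complexes are monomorphisms; here the analogous monomorphism property is automatic because summand inclusions admit retractions and so are split monomorphisms. By the outer and nested inductive hypotheses, the two maps out of $L_{n-1}^{(i-1)}X_\bul$ are cofibrations and summand inclusions, and by the closure lemma the two maps into $L_n^{(i)}X_\bul$ are as well. The final step is to show that the induced map $L_n^{(i)}X_\bul \to X_n$ is still a summand inclusion (and cofibration); this follows by applying the universal property of the pushout to the summand decompositions of $X_n$ provided by the summand inclusions $L_n^{(i-1)}X_\bul \to X_n$ and $s_i\colon X_{n-1}\to X_n$, which are compatible on $L_{n-1}^{(i-1)}X_\bul$ by construction.

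The main technical point to verify carefully is this compatibility of the summand decompositions on $X_n$, i.e., that the retractions chosen for $L_n^{(i-1)}X_\bul \hookrightarrow X_n$ and for $s_i$ agree when restricted to the common piece $s_i L_{n-1}^{(i-1)}X_\bul$. This compatibility is forced by the simplicial identities among the degeneracies, so the argument is essentially combinatorial rather than homotopical. Once this is checked, Reedy cofibrancy of $X_\bul$ follows because $L_n X_\bul = L_n^{(n-1)} X_\bul$ and the latching map is then a cofibration.
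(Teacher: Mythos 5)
Your proposal takes a genuinely different route from the paper's, and it has a gap at exactly the point you flag as ``the main technical point to verify carefully.'' The paper does not run the EKMM-style induction on partial latching objects at all. Instead it forms the colimit $X_\infty=\colim_i X_i$ along the degeneracies $s_0$, decomposes $X_\infty$ as a coproduct of indecomposables $\bigvee_{\alpha\in A_\infty}Y_\alpha$, and observes that each $X_n$ and each degeneracy is then governed by a subset $A_n\subseteq A_\infty$ and a subset inclusion. The latching object is identified with the sub-coproduct indexed by the union of the images of these set-level degeneracies, so the latching map is visibly a summand inclusion, and it is a cofibration because its complementary summand $X_n^{\mathrm{nd}}$ is a retract of the cofibrant object $X_n$. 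The entire point of this maneuver is to produce one decomposition refining all of the summand decompositions simultaneously, which is precisely what your argument is missing.

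The gap: you assert that the compatibility of the decompositions $X_n\cong s_iX_{n-1}\vee C_i$ for varying $i$ ``is forced by the simplicial identities.'' It is not. The identities $s_is_{j-1}=s_js_i$ for $i<j$ constrain the degeneracy maps themselves---they exhibit $s_is_{j-1}X_{n-2}$ as a common sub-summand of $s_iX_{n-1}$ and $s_jX_{n-1}$---but they say nothing about the complements $C_i$, which are auxiliary choices. What your induction actually needs is that the induced map from the pushout $s_iX_{n-1}\cup_{s_is_{j-1}X_{n-2}}s_jX_{n-1}$ into $X_n$ is again a summand inclusion, and the underlying categorical statement is false: already in $\bZ^3$ the summands $A=\bZ(1,0,0)\oplus\bZ(0,1,0)$ and $B=\bZ(1,0,0)\oplus\bZ(0,1,2)$ intersect in a common summand $\bZ(1,0,0)$ of each, yet the map from their pushout over that intersection into $\bZ^3$ has image of index $2$ and is not a summand inclusion. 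Your closure lemma controls the pushout in the abstract but not the map from the pushout to $X_n$; without a common refinement of the decompositions, the partial latching objects $L_n^{(i)}$ need not be summands of $X_n$, and the pushout squares borrowed from \Cref{prop:ekmm-argument-for-reedy-cofibrancy} (which in the cellular case are supplied by the subcomplex machinery of \cite[Prop.~10.6.10]{Hir03}) are not available. To repair the argument you must manufacture the common refinement---which is what the paper's passage to the indecomposables of $X_\infty$ accomplishes---rather than appeal to the simplicial identities.
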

\begin{proof}
  First let $X_{\infty}=\colim_{i}X_i$ where $X_i$ maps to $X_{i+1}$ via
  $s_0$. Let $A_{\infty}$ be a set such that  
  \[
  X_{\infty}=\bigvee_{\alpha \in A_{\infty}} Y_\alpha
  \]  
  where $Y_\alpha$ cannot be written as a nontrivial coproduct.  Since
  each $X_n$ is an inclusion of a summand of $X_{n+1}$, there are
  subsets $A_n \subset A_\infty$ such that
  \[ 
  X_n = \bigvee_{\alpha \in A_n} Y_\alpha .
  \] 
  The degeneracies $X_{n-i} \to X_{n}$ are summand inclusions and
  therefore are induced by subset inclusions $A_{n-i} \to A_{n}$ which
  we call the set-level degeneracies.  We can now identify the
  degenerate simplices $X_n^{\mathrm{dg}}=\bigvee_{\alpha \in A_n^{\prime}}
  Y_\alpha$ where $A_n^{\prime}$ is the union (i.e., colimit) of all
  the $A_{n-i}$ under these set-level degeneracies for $1 \le i \le
  n$.  This union of sets indexes the colimit of objects yielding the
  latching object, so we can identify $X_{n}^{\mathrm{dg}}$ with $L_n X_{\bul}$
  and the latching map with that induced by the inclusion
  $A_{n}^\prime \rightarrow A_n$.

  Let $X_n^{\mathrm{nd}}$ be the complementary summand of $L_n
  X_\bul$---this is the nondegenerate part of $X_n$.  To see that the
  latching map is a cofibration we begin by observing that it is a
  coproduct of the identity map on the latching object with the map
  from the initial object into $X_n^{\mathrm{nd}}$.  Now $X_n$ is
  cofibrant because each of the degeneracies are cofibrations and
  $X_0$ is cofibrant.  The retract $X_n^{\mathrm{nd}}$ is therefore
  cofibrant, and hence the latching map is a coproduct of
  cofibrations.
\end{proof}

\begin{prop}\label{prop:summand-resolvable}
  Let $T$ be a simplicial Quillen monad acting on a pointed simplicial model category $\cC$.
  Suppose that for any cofibrant object $M$ the natural unit map $M
  \rightarrow TM$ is a cofibration and inclusion of a summand. If $\widetilde{X} \to X$ is a cofibrant replacement of  a $T$-algebra $X$, such that $U \widetilde{X}$ is cofibrant, then $\widetilde{X}$ is a bar cofibrant replacement of $X$.
\end{prop}
\begin{proof}
  This is an immediate application of \Cref{prop:summand-inclusion}.
\end{proof}

\section{The spectral sequence and examples}\label{sec:ss-hty-alg-maps}
\subsection{Proof of Theorem~A}\label{sec:main-theorem}
Now we recall and prove the central theorem of this paper:
\begin{thmA}
  \begin{samepage}
    Let $\cC$ be a simplicial model category and $T$ a
    simplicial Quillen monad acting on $\cC$.
    Let $X$ and $Y$ be $T$-algebras.  Suppose that
    \begin{hypothenumerate}
      \item $T$ commutes with geometric realization and
      \item $X$ is resolvable with bar cofibrant replacement $\widetilde{X} \to X$.
    \end{hypothenumerate}
  \end{samepage}
  Let $U\colon \cC_T\rightarrow \cC$ denote the forgetful functor from
  the category of $T$-algebras to $\cC$.  Then $T$ induces a monad $\h
  T$ on $\ho\cC$ and there
  exists an obstruction-theoretic spectral sequence satisfying:
  \begin{enumerate} \setlength{\itemsep}{.4pc}
    \item\label{it:e-1-0-0} $E_1^{0,0}=\ho\cC(UX,UY)$.
    \item\label{it:e-2-0-0} $E_2^{0,0}=(\ho\cC)_{\h T}(UX, UY)$.  That is,
    a homotopy class $[f] \colon UX \to UY$ survives to the $E_2$ page
    if and only if it is a map of $\h T$-algebras in the homotopy
    category.
    \item \label{it:e-2-s-t} Provided a $T$-algebra map $\epz \cn X \to
    Y$ to serve as a base point, the spectral sequence conditionally
    converges to the homotopy of the derived mapping space
      \[
      \pi^s \pi_t (\cC^d(T^{\bul}U\widetilde{X}, UY),\epz) \Longrightarrow \pi_{t-s}(\cC_T^d(X,Y),\epz).
      \]
     \item \label{it:differentials} In this case the differentials $d_r[f]$ provide obstructions to
       lifting $[f]$ to a map of $T$-algebras.
     \item \label{it:edges} The edge homomorphisms 
       \begin{align*}
	 \pi_0 \cC_T^d(X,Y)
	 &\twoheadrightarrow E_\infty^{0,0}\\
	 &\hookrightarrow E_2^{0,0} = (\ho\cC)_{\h T}(UX, UY)\\
	 &\hookrightarrow E_1^{0,0} = \ho\cC (UX, UY)
       \end{align*}
       are the corresponding forgetful functors.
     \item \label{it:functoriality-X-Y}
       If $\cC_T$ has functorial bar cofibrant and fibrant
       replacements, then the spectral sequence is contravariantly functorial in
       $X$ and covariantly functorial in $Y$.
     \item \label{it:map-of-monads}
       A map of simplicial monads $T_1\rightarrow T_2$ satisfying the above
       hypotheses induces a contravariant map of spectral sequences provided that
       $X$ has a bar cofibrant replacement $\widetilde{X} \to X$ in
       $\cC_{T_2}$ such that $U_3 \widetilde{X}$ has a bar cofibrant
       replacement in $\cC_{T_1}$.
       \end{enumerate}
\end{thmA}
\begin{proof}
  First, in order for the theorem to make sense there must be a
  derived mapping space of $T$-algebras and this follows from the
  assumption that $T$ is simplicial Quillen. 

  The conclusions of the theorem depend only on the weak equivalence
  classes of $X$ and $Y$, so without loss of generality we assume $Y$
  is a fibrant $T$-algebra and, replacing $X$ with $\widetilde{X}$ if
  necessary, that $X$ is a bar cofibrant $T$-algebra.  By
  \Cref{prop:bar-construction-is-reedy-cofib} the bar resolution
  $B_{\bul}X$ is a Reedy cofibrant simplicial $T$-algebra and any two choices of bar cofibrant replacement yield equivalent bar resolutions. Since $Y$
  is fibrant and $\cC$ is a simplicial model category, applying the
  mapping space functor $\cC_T(-,Y)$ to a Reedy cofibrant simplicial
  $T$-algebra yields a Reedy fibrant cosimplicial space.  In
  particular, $\cC_T(B_{\bul}X, Y)$ is Reedy fibrant.

  Applying \cite{Bou89}, the totalization tower for this Reedy fibrant
  cosimplicial space arising from the skeletal filtration on
  $|B_{\bul}X|$ yields an obstruction-theoretic spectral sequence
  computing the homotopy of the totalization
  \[
  \Tot(\cC_T(B_{\bul}X, Y))\cong \cC_T(|B_{\bul}X|, Y).
  \] 
  This spectral sequence conditionally converges provided there exists
  a base point at which to take homotopy groups.  (A list of
  obstructions to determining such a base point is also provided by
  the construction; see \Cref{rem:obstructions}.)

  Now since $B_{\bul}X$ is Reedy cofibrant and $\sC_T$ is a simplicial
  model category, $|B_{\bul} X|$ is a cofibrant $T$-algebra. Since $T$
  commutes with geometric realization, \Cref{prop:correct-bar} shows
  that the augmentation map 
  \[ 
  |B_{\bul} X|\rightarrow X 
  \]
  is a weak equivalence of $T$-algebras.  It follows that
  $\cC_T(|B_{\bul}X|, Y)$ is a model for $\cC_T^d(X,Y)$ and this gives
  the target of the spectral sequence in \eqref{it:e-2-s-t}.
  Conclusion \eqref{it:differentials} follows immediately from the
  conditional convergence of the spectral sequence.

  The $E_1^{0,0}$ term of the Bousfield-Kan spectral sequence is the set
  \[\pi_0 \cC_T(B_0 X,Y)=\pi_0 \cC_T(F_T UX, Y)\cong \pi_0 \cC(UX,UY).\]
  To prove \eqref{it:e-1-0-0} we will show the right-hand side can be
  identified with morphisms in the homotopy category. This follows if
  $UX$ is cofibrant and $UY$ is fibrant since $\cC$ is a simplicial
  model category. These conditions follow from the hypotheses that $X$ is
  bar cofibrant and that $T$ is Quillen: $T^{\bul}UX$ is Reedy
  cofibrant so the zeroth latching map shows that $UX$ is cofibrant.
  Since $T$ is Quillen, $U$ is a right Quillen functor and therefore 
  $UY$ is fibrant because $Y$ is fibrant.

  The edge homomorphism 
  \[
  \pi_0 \cC_T^{d}(X,Y) \rightarrow E_1^{0,0} 
  \]
  is induced by restricting along the inclusion 
  \[
  \mathrm{sk}_0 |B_{\bul} X|=F_T UX\rightarrow |B_{\bul} X| 
  \] 
  which by adjunction gives the second half of \eqref{it:edges}. The
  first half will follow from the identification of the $E_2^{0,0}$
  term in \eqref{it:e-2-0-0}.
  
  To prove \eqref{it:e-2-0-0} recall that the $E_2^{0,0}$ term of the
  Bousfield-Kan spectral sequence is defined to be the equalizer of the
  two face maps 
  \[ 
  \pi_0 \cC_T(B_0 X, Y)\rightrightarrows \pi_0 \cC_T(B_1 X, Y).
  \]
  \saveblockN{}%
   We again use the adjunction and the fact that $T^{\bul}UX$ is Reedy cofibrant
   to see that the diagram above is isomorphic to
   \[
   \ho\cC(UX,UY)\rightrightarrows \ho\cC(TUX, UY),
   \] 
   whose equalizer is, by definition, $(\ho\cC)_{\h T}(UX,UY)$ (see
   \eqref{eqn:maps-of-t-algebras} and \Cref{prop:homotopy-monad}).  In
   other words, a map lifts to $E_2^{0,0}$ precisely if it is a
   homotopy $T$-algebra map.

  Provided a base point $\epsilon$ for the spectral sequence, or even a
  point that lifts to $\Tot^2$ (see \Cref{rem:obstructions}), the $E_1$
  page of this spectral sequence is given by applying $\pi_t$ to the
  spaces $\cC_T(B_{s}X,Y)$ and normalizing as in \cite[\S~2.4]{Bou89}. The $E_2$
  term can be identified with the cohomotopy of this graded cosimplicial
  object which is typically denoted as follows:
  \[ 
  E_2^{s,t}=\pi^s\pi_t (\cC_T(B_{\bul} X, Y),\epsilon). 
  \]
  By adjunction we have 
  \[
  \cC_T(B_{n}X,Y) =\cC_T(F_T T^{n}UX,Y)\cong \cC(T^{n}UX,UY).
  \] 
  As in the previous steps, the right-hand side is a model for the
  derived mapping space since $UY$ is fibrant and $T^{\bul}UX$ is
  Reedy cofibrant.  This completes the proof of \eqref{it:e-2-s-t}.

Since the Bousfield-Kan spectral sequence is functorial in maps of fibrant cosimplicial spaces, to prove \eqref{it:functoriality-X-Y} it suffices to see that our construction of the fibrant cosimplicial space is functorial in $X$ and $Y$. This is immediate from the functoriality of the bar resolution and the conditions of \eqref{it:functoriality-X-Y}.

  To prove \eqref{it:map-of-monads} we note that a map of simplicial monads $T_1 \to T_2$
  between cocomplete categories determines a monadic adjunction
  $(F_{T_3},U_3)$ fitting into the following diagram \cite
  [Cor.~4.57, Prop.~4.5.9]{Bor94a}:
  \[\xymatrix@R=2.5pc@C=4pc{ 
    \cC_{T_1} \ar[rd]<.5ex>^{U_1}
    \ar[rr]<.5ex>^{F_{T_3}} & &
    \cC_{T_2}\ar[ld]<.5ex>^{U_2}\ar[ll]<.5ex>^{U_3}\\ &
    \cC_{}\ar[ru]<.5ex>^{F_{T_2}}\ar[lu]<.5ex>^{F_{T_1}} & }
  \] 
  To see that $U_3$ induces a morphism of simplicial mapping objects, observe that a map of simplicial monads
  induces a natural transformation between the equalizer diagrams in simplicial sets which determine the simplicial mapping objects in $T_2$ and $T_1$-algebras respectively (cf.~\eqref{eqn:maps-of-t-algebras}). Since both categories of algebras inherit their composition laws from the composition in $\cC$ we see that $U_3$ is a simplicial functor.

  \saveblockO{}

  To obtain a map between the spectral sequences corresponding to
  \[ 
  \cC_{T_2}(X,Y)\xrightarrow{U_3} \cC_{T_1}(U_3 X,U_3 Y)
  \]
  let $r\cn(U_3X)^b \to U_3X$ be a bar cofibrant replacement of
  $U_3X$ in $\cC_{T_1}$. We remind the reader that we have already replaced $X$ and $Y$ by their bar cofibrant and fibrant replacements respectively. Now apply $U_3$ to the $T_2$-bar resolution
  for $X$, precompose with the unit map $e\cn\Id_{\cC_{T_1}}\rightarrow
  T_3$, and take bar cofibrant and fibrant replacements of
  $U_3 X$ and $U_3 Y$ respectively\footnote{In practice, all of the
    model structures on $T$-algebras are usually right induced and
    $U_3$ is necessarily a simplicial right Quillen functor. In this case 
    taking a fibrant replacement is not necessary.}:
  \begin{align*}
     \cC_{T_2}(B_\bul (F_{T_2},T_2,U_2 X), Y)
    &\xrightarrow{U_3} \cC_{T_1}(U_{3} B_\bul (F_{T_2},T_2,U_2 X), U_3 Y)\\
    &= \cC_{T_1}( B_\bul (T_3F_{T_1},U_1T_3 F_{T_1},U_1U_3 X), U_3 Y)\\
    &\xrightarrow{e^*} \cC_{T_1}( B_\bul (F_{T_1},T_1,U_1U_3 X), U_3 Y)\\
    &\xrightarrow{r^*} \cC_{T_1}( B_\bul (F_{T_1},T_1,U_1(U_3 X)^b), (U_3 Y)^f).
  \end{align*}
  So by our assumptions on $X$, we see that $U_3$ induces a morphism between two fibrant cosimplicial spaces whose associated spectral sequences are the $T$-algebra spectral sequences calculating $\pi_*\cC_{T_2}^d(X,Y)$ and $\pi_* \cC_{T_1}^{d}(U_3 X,U_3Y)$ respectively. 
\saveblockP{}%
\end{proof}

We highlight two immediate corollaries of \refthmA.
\begin{cor}\label{cor:forget-full}
  The forgetful functor taking a non-empty $\ho(\cC_T)(X,Y)$ to
  $(\ho\cC)_{\h T}(X,Y)$ is surjective if and only if the differential
  $d_r$ on $E_r^{0,0}$ is trivial for all $r \ge 2$.
\end{cor}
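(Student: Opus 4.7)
The plan is to read off the corollary directly from the edge-homomorphism factorization in part \eqref{it:edges} of \Cref{thm:main}. Since $\ho(\cC_T)(X,Y) = \pi_0 \cC_T^d(X,Y)$ is nonempty by hypothesis, pick any $T$-algebra map $\epz \cn X \to Y$ as a basepoint. This triggers \Cref{thm:main}\eqref{it:e-2-s-t} and \eqref{it:edges}, which give the factorization
\[
  \ho(\cC_T)(X,Y) \;\twoheadrightarrow\; E_\infty^{0,0}
  \;\hookrightarrow\; E_2^{0,0} \;=\; (\ho\cC)_{\h T}(UX,UY)
\]
of the forgetful functor in question. Because the first map is surjective by construction, surjectivity of the forgetful functor is equivalent to the equality $E_\infty^{0,0} = E_2^{0,0}$ (viewed as subsets of $E_2^{0,0}$).

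Next I would analyze how $E_r^{0,0}$ evolves with $r$. The $T$-algebra spectral sequence is a Bousfield cosimplicial spectral sequence, so its differentials have bidegree $d_r \cn E_r^{s,t} \to E_r^{s+r,t+r-1}$. In particular, there are no nonzero differentials \emph{entering} $E_r^{0,0}$ for any $r \ge 2$ (the source $E_r^{-r,-r+1}$ vanishes), so the only way $E_{r+1}^{0,0}$ can differ from $E_r^{0,0}$ is by cutting down to the kernel of the outgoing differential:
\[
  E_{r+1}^{0,0} \;=\; \ker\bigl(d_r \cn E_r^{0,0} \to E_r^{r,r-1}\bigr).
\]
Iterating this identification, we obtain $E_\infty^{0,0} = \bigcap_{r \ge 2} \ker(d_r|_{E_r^{0,0}})$ inside $E_2^{0,0}$. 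Hence $E_\infty^{0,0} = E_2^{0,0}$ if and only if $d_r$ vanishes on $E_r^{0,0}$ for every $r \ge 2$, and combining this with the previous paragraph yields the corollary.

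There is no serious obstacle here; the only point requiring any care is to confirm that $E_\infty^{0,0}$ really is the intersection $\bigcap_{r\ge 2} E_r^{0,0}$ rather than the $\lim^1$-plus-$\lim$ object that can show up elsewhere in conditionally convergent spectral sequences. Since $E_r^{0,0}$ is a decreasing sequence of subsets of the fixed set $E_2^{0,0}$ (each inclusion being the kernel of an outgoing differential), the inverse system is Mittag-Leffler and $\lim^1 = 0$, so the identification $E_\infty^{0,0} = \bigcap_r E_r^{0,0}$ is automatic. The surjectivity of $\ho(\cC_T)(X,Y) \twoheadrightarrow E_\infty^{0,0}$ used above is exactly the edge-homomorphism statement in \Cref{thm:main}\eqref{it:edges} and requires nothing further.
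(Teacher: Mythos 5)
Your argument is correct and is exactly the "immediate" deduction the paper intends from \Cref{thm:main}\eqref{it:e-2-s-t}--\eqref{it:edges}: the paper states this corollary without proof as a direct consequence of the edge-homomorphism factorization, and your identification of $E_\infty^{0,0}$ with $\bigcap_{r\ge 2}\ker(d_r|_{E_r^{0,0}})$ (no incoming differentials, Mittag-Leffler) is the standard bookkeeping that makes it precise. The only cosmetic remark is that in Bousfield's fringed setting $d_r[f]$ is an obstruction class depending on $[f]$ (so ``kernel'' should be read as the subset of classes whose obstruction is the basepoint), which is how \Cref{thm:main}\eqref{it:differentials} phrases it.
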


\begin{cor}\label{cor:forget-faithful}

  Suppose the portion of the spectral sequence computing $\pi_0
  \cC_T^d(X,Y)$ converges \cite[\S~4.2]{Bou89}, i.e., there exists a base point $\epz$ and 
  \[
  \lim_s{\textstyle{\hspace{-1pt}^1\hspace{1pt}}} 
  \pi_{1}(\cC^{d}_{T}(\mathrm{sk}_s|B_{\bul}\widetilde{X}|, Y),\epz)=0.
  \]  
  Then the forgetful functor taking $\ho(\cC_T)(X,Y)$ to $(\ho\cC)_{\h
  T}(X,Y)$ is injective if and only if $E_\infty^{t,t} = 0$ for
  $t>0$.
\end{cor}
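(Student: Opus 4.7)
The plan is to translate injectivity of the forgetful functor into the vanishing of the fiber of the edge homomorphism $\pi_0 \cC_T^d(X,Y) \twoheadrightarrow E_\infty^{0,0}$ over $[\epz]$, and then to identify this fiber with the diagonal terms $E_\infty^{t,t}$ for $t > 0$ via the Bousfield-Kan totalization tower.

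By the edge-homomorphism statement in \Cref{thm:main}, the forgetful functor factors as the surjection $\pi_0 \cC_T^d(X,Y) \twoheadrightarrow E_\infty^{0,0}$ composed with the inclusion $E_\infty^{0,0} \hookrightarrow (\ho\cC)_{\h T}(UX, UY)$, so it suffices to analyze when the edge map is bijective. Applying the hypothesis $\lim\nolimits^1 \pi_1(\cC_T^d(\mathrm{sk}_s |B_\bul\widetilde X|,Y),\epz) = 0$ together with the Milnor short exact sequence for the totalization tower yields
\[
\pi_0 \cC_T^d(X,Y) \iso \lim_s \pi_0 \cC_T^d(\mathrm{sk}_s |B_\bul\widetilde X|,Y),
\]
so the fiber of the edge map over $[\epz]$ is the inverse limit of the fibers over $[\epz]$ of the consecutive tower maps $\pi_0 \cC_T^d(\mathrm{sk}_s|B_\bul\widetilde X|,Y) \to \pi_0 \cC_T^d(\mathrm{sk}_{s-1}|B_\bul\widetilde X|,Y)$.

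The key remaining step is to identify each such consecutive fiber with $E_\infty^{s,s}$.  This is the standard unravelling of the Bousfield-Kan spectral sequence for a Reedy fibrant cosimplicial space, and in Bousfield's framework \cite[\S 2--\S 5]{Bou89} the $E_\infty^{s,s}$ term based at $[\epz]$ is naturally identified with the fiber over $[\epz]$ of $\pi_0 \Tot^s \to \pi_0 \Tot^{s-1}$, where $\Tot^\bul$ is applied to the Reedy fibrant cosimplicial space $\cC_T(B_\bul \widetilde X, Y)$.  Combined with the preceding reduction, the fiber of the edge map over $[\epz]$ is trivial if and only if $E_\infty^{s,s} = 0$ for every $s \ge 1$, giving the claimed equivalence.

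The main obstacle is the fringed nature of Bousfield's spectral sequence: at $\pi_0$ one works with pointed sets rather than abelian groups, so the identification of $E_\infty^{s,s}$ with the pointed-set fiber at $[\epz]$ must be handled via actions and pointed-set extensions rather than short exact sequences of abelian groups, as in \cite[\S 5]{Bou89}.  A related minor point is that \emph{injectivity} of the forgetful functor should be interpreted relative to the basepoint $[\epz]$: the spectral sequence directly computes that the preimage of the image of $[\epz]$ under the forgetful functor is the singleton $\{[\epz]\}$, which is the pointed-set analogue of injectivity.
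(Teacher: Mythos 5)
Your proposal is correct and follows exactly the route the paper intends: the paper offers no proof, presenting this as an immediate corollary of \Cref{thm:main} together with Bousfield's convergence theory, and your argument via the Milnor $\lim^1$ sequence for the totalization tower and the identification of $E_\infty^{s,s}$ with the successive fibers of $\pi_0\Tot^s \to \pi_0\Tot^{s-1}$ over $[\epz]$ is precisely the standard unravelling being invoked. Your closing caveat --- that the fringed spectral sequence literally controls only the fiber over the basepoint class $[\epz]$, so ``injective'' should be read in that pointed sense --- is an accurate observation about an imprecision in the statement rather than a gap in your argument.
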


\begin{remark}
  As stated in \cite{Bou89}, every entry in the spectral sequence above
  should consist of pointed sets. We have chosen to omit the distinguished point
  $[\epsilon]$ in bidegree $(0,0)$ to simplify the statement
  of \refthmA. 
\end{remark}

\begin{rmk}\label{rem:obstructions} \ 
  There are, in fact, a variety of obstruction sequences whose
  vanishing can give a lift of $\epz$ through the totalization tower.
  The following are special cases of \cite[\S\S~2.4, 2.5, 5.2]{Bou89}
  for a cosimplicial object $X_\bul$ in a simplicial category $\cD$:
  \begin{enumerate}
  \item The $r$th spectral sequence page $E_r^{p,q}$ is defined if
    there is an element $\epz_{r-1} \in \Tot^{r-1} \cD(X_\bul, Y)$
    which lifts to $\Tot^{2r - 2} \cD(X_\bul, Y)$, and the page
    depends naturally on $\epz_{r-1}$.
  \item Let $\epz_p \in \Tot^p \cD(X_\bul, Y)$, and let $\epz_{k}$ be
    the projection of $\epz_p$ to $\Tot^{k} \cD(X_\bul , Y)$. If
    \[
    p/2 \leq k \leq p
    \]  
    then there is an obstruction element lying in $E_{p-k+1}^{p+1, p}$
    which vanishes if and only if $\epz_k$ lifts to $\Tot^{p+1}
    \cD(X_\bul, Y$).
  \end{enumerate}

  If Whitehead products vanish in each $\cD(X_s, Y)$ (e.g., when the mapping spaces of $\cD$ are $H$-spaces), then the
  range in which the obstruction classes are defined can be extended
  as follows:
  \begin{enumerate}
    \renewcommand{\theenumi}{\arabic{enumi}$^\prime$}
  \item The $r$th spectral sequence page $E_r^{p,q}$ is defined if
    there is an element $\epz_{r-2} \in \Tot^{r-2} \cD(X_\bul, Y)$
    which lifts to $\Tot^{2r - 3} \cD(X_\bul, Y)$, and the page
    depends naturally on $\epz_{r-2}$.
  \item \label{it:better-obstructions} Let $\epz_p \in \Tot^p
  \cD(X_\bul, Y)$, and let $\epz_{k}$ be the projection of $\epz_p$ to
  $\Tot^{k} \cD(X_\bul , Y)$. If \[(p-1)/2 \leq k \leq p\]  then there
  is an obstruction element lying in $E_{p-k+1}^{p+1, p}$ which vanishes
  if and only if $\epz_k$ lifts to $\Tot^{p+1} \cD(X_\bul, Y$).
  \end{enumerate}
\end{rmk}

Taking $p=1$ and $k=0$ in \eqref{it:better-obstructions} from
\Cref{rem:obstructions}, we obtain the following useful refinement of
\refthmA. 

\begin{thm}{(Cf.~\cite[Cor.~2.4.15]{GoH05})}\label{thm:obstructions}
  Let $\cC$ be a simplicial model category and $T$ a simplicial
  Quillen monad acting on $\cC$ such that $T$ commutes with geometric realization. Let $X$ and $Y$ be $T$-algebras, such that $X$ admits a bar cofibrant replacement $\widetilde{X}$. Moreover, assume that the derived mapping spaces
  $\cC^d(T^{n}U\widetilde{X}, UY)$ have the homotopy type of
  $H$-spaces.

  Then the $T$-algebra spectral sequence
  exists, its $E_2$ term is always defined, and there is a series of
  successively defined obstructions to realizing a map 
  \[ [f]\in E_2^{0,0} = (\ho\cC)_{\h T}(UX, UY) \] 
  in the groups 
  \[ E_{2}^{s+1,s}\cong \pi^{s+1}\pi_s (\cC^d(T^{\bul}U\widetilde{X},
  UY),f) \] 
  for $s\geq 1$.  In particular, if these groups are always zero, then the
  map induced by the forgetful functor 
  \[ \ho(\cC_T)(X,Y)\rightarrow (\ho\cC)_{\h T}(UX,UY) \]
   is surjective. If the portion of the spectral sequence computing $\pi_0
  \cC_T^d(X,Y)$ converges and
   \[ \pi^s \pi_s(\cC^d(T^{\bul}U\widetilde{X}, UY),f)=0\]  
  for each choice of $[f]$ and all $s \ge 1$, then this map is an injection.
\end{thm}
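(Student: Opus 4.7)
My plan is to assemble the conclusion from three inputs already present in the paper: the spectral sequence of \Cref{thm:main}, the improved Bousfield obstruction theory in items (1$^\prime$) and (2$^\prime$) of \Cref{rem:obstructions} (available precisely because of the $H$-space hypothesis), and the identifications of the edge homomorphisms and $E_2$ term from \Cref{thm:main}. Throughout, we work with the cosimplicial space $\cD(X_\bul,Y) := \cC_T(B_\bul \widetilde{X}, Y)$, which by adjunction and Reedy cofibrancy of $T^{\bul}U\widetilde{X}$ has $s$-th space weakly equivalent to $\cC^d(T^s U\widetilde{X}, UY)$, so the $H$-space hypothesis applies at every level.

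First I would check that $E_2$ is always defined. By \Cref{rem:obstructions}(1$^\prime$) with $r=2$, the $E_2$ page exists provided there is some $\epz_0\in \Tot^0 \cD(X_\bul,Y)$ which lifts to $\Tot^1 \cD(X_\bul,Y)$. By the computation of $E_1^{0,0}$ and $E_2^{0,0}$ in \Cref{thm:main} parts (1) and (2), an element of $\Tot^0$ lifting to $\Tot^1$ is exactly a homotopy class that equalizes the two face maps, i.e.\ is an $\h T$-algebra map. So any $[f]\in (\ho\cC)_{\h T}(UX,UY)$ does the job, and in particular the $E_2$ page is defined without needing a strict basepoint $\epz\in \pi_0 \cC_T^d(X,Y)$.

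Next I would run the successive obstruction theory. Specialize \Cref{rem:obstructions}(2$^\prime$) to $p=k+1$ (so that $p-k+1=2$ and the obstructions land on the $E_2$ page): the hypothesis $(p-1)/2\leq k\leq p$ becomes $k/2\leq k\leq k+1$, which holds for all $k\geq 0$. Setting $s=k+1\geq 1$, this says that given a chosen lift $\epz_s\in \Tot^s$ there is a well-defined obstruction in $E_2^{s+1,s}$ whose vanishing guarantees that $\epz_{s-1}$ lifts to some $\epz_{s+1}\in \Tot^{s+1}$. Under the identification $E_2^{s+1,s}\cong \pi^{s+1}\pi_s(\cC^d(T^{\bul}U\widetilde{X},UY),f)$ supplied by the adjunction and Reedy cofibrancy, these are the claimed groups. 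Starting from $\epz_1 = [f]$ given by the $\h T$-algebra class, successive vanishing allows us to build a compatible tower of lifts; the inverse limit lies in $\Tot^{\infty}\cD(X_\bul,Y)\simeq \cC_T^d(X,Y)$ and forgets to $[f]$, giving surjectivity.

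For the bijection, I would combine the above with \Cref{cor:forget-faithful}. The hypothesis $\pi^s\pi_s(\cC^d(T^{\bul}U\widetilde{X},UY),f)=0$ gives $E_2^{s,s}=0$ for $s\geq 1$, and since $E_\infty^{s,s}$ is a subquotient of $E_2^{s,s}$ it also vanishes. The main obstacle I expect is verifying the $\lim^1$ convergence hypothesis of \Cref{cor:forget-faithful}: one must show that under the $H$-space assumption on the mapping spaces, together with the vanishing of the $E_2^{s,s}$ (and the obstructions $E_2^{s+1,s}$ in diagonal-adjacent positions controlling fibers of $\Tot^s\to\Tot^{s-1}$), the tower of $\pi_0 \Tot^s$'s stabilizes with no $\lim^1$ contribution. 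This is a standard consequence of the Bousfield fiber-sequence analysis \cite[\S 5]{Bou89} for cosimplicial $H$-spaces, but it is the only nontrivial bookkeeping step; once in hand, $\pi_0\cC_T^d(X,Y)\twoheadrightarrow E_\infty^{0,0}\hookrightarrow E_2^{0,0}=(\ho\cC)_{\h T}(UX,UY)$ is forced to be a bijection by the edge homomorphism description in \Cref{thm:main}(5).
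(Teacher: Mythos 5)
Your proposal is correct and follows essentially the same route as the paper, which derives \Cref{thm:obstructions} in a single line by specializing items (1$^\prime$) and (2$^\prime$) of \Cref{rem:obstructions} (the paper cites the case $p=1$, $k=0$; your general case $p=s$, $k=s-1$ is the intended series of obstructions) together with the identifications of $E_1^{0,0}$, $E_2^{0,0}$, and $E_2^{s,t}$ from the proof of \Cref{thm:main}. Your explicit attention to the $\lim^1$/tower-compatibility bookkeeping needed for the surjectivity and bijectivity claims is if anything more careful than the paper, which leaves that step implicit in the citations to \cite[\S 5]{Bou89} and \Cref{cor:forget-faithful}.
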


\subsection{Observations on \texorpdfstring{$E_1$}{E\_1}}
\label{sec:observations-e1}
To simplify notation for this section we assume that $\cC$, $T$, $X$,
and $Y$ are as in \refthmA and we have replaced $X$ with
$\widetilde{X}$ and $Y$ with its fibrant replacement if necessary.
%SAVE
\saveblockQ{}

Provided all of the terms in $E_1^{s,t}$ of the $T$-algebra spectral
sequence for $t>0$ are abelian groups, then we can avoid using the normalized cocomplex in \cite{Bou89} and instead use Moore cochains. For example,
this happens if the mapping spaces $\cC(T^nUX,UY)$ have
the homotopy type of $H$-spaces (\Cref{thm:obstructions}). For the unnormalized complex, we can apply the tensor-cotensor adjunction to obtain the following identification of $E_1^{s,t}$:
\[
E_1^{s,t} = \pi_t \big( \cC(T^s UX, UY), \epz\big) \iso
\pi_0 \cC_{\downarrow UY}(T^s UX, UY^{S^t}).
\]
This displays $E_1^{s,t}$ as a set of homotopy classes of lifts in
the diagram below, with homotopies over $\epz$:
\[
\xym{
  & UY^{S^t} \ar[d]\\
  T^s UX \ar[r]^-{\epz} \ar@{-->}[ur] & UY }
\]

As in the proof of \refthmA, $T^{\bul} UX$ is Reedy cofibrant
and it follows that each $T^n UX$ is cofibrant in $\cC$.
Since $U$ creates cotensors, preserves fibrations,
and $Y$ is fibrant in $\cC_{T}$, we see $UY^{S^t}$ is fibrant in $\cC$.
Now the overcategory is a cofibrantly generated simplicial model category whose 
cofibrations/fibrations/weak equivalences are those of $\cC$. So these 
objects are cofibrant and fibrant respectively in $\cC_{\downarrow UY}$.
Regarding $UX$ as an object over $UY$ by a chosen map $\epz \colon UX \to
UY$, we identify the unnormalized complex:
\begin{align*}\label{eq:E1st-maps-terms}
  E_1^{s,t}\cong \ho\cC_{\downarrow UY}(T^s UX, UY^{S^{t}})\quad \text{ for } t>0.
\end{align*}

\begin{figure}
\begin{tikzpicture}
    \matrix (m) [matrix of math nodes, row sep=1em, column sep=.5pc,
    text depth=.5ex, nodes={
      inner sep=.5em, minimum width=9.5pc, 
      minimum height=3.5em}] { 
      \ho\cC_{\downarrow UY} \left( T^2 UX, UY^{S^{1}} \right)
      & \ho\cC_{\downarrow UY} \left( T^2 UX, UY^{S^{2}} \right) 
      & \ho\cC_{\downarrow UY} \left( T^2 UX, UY^{S^{3}} \right) \\
      {\ho\cC \big( T UX, UY \big)^{\phantom{S^0}}}
      & \ho\cC_{\downarrow UY} \left( T UX, UY^{S^{1}} \right)
      & \ho\cC_{\downarrow UY} \left( T UX, UY^{S^{2}} \right) \\
      & {\ho\cC \big( UX,UY \big)^{\phantom{S^0}}}
      & \ho\cC_{\downarrow UY} \left( UX, UY^{S^{1}} \right)\\
   };
   % horizontal
   \draw[->] (-15pc,-7em)
   -- ++ (30.3pc, 0);
   \node at (16.5pc+1ex,-7em) {$t-s$};
   \draw (-15pc, -7em) 
   ++ (0, -1.3ex)
   ++ (4.75pc,0) node {\textit{\tiny -1}}
   ++ (10pc, 0) node {\textit{\tiny 0}}
   ++ (10pc, 0) node {\textit{\tiny 1}};

   % vertical
   \draw[->] (-5pc, -8em)
   -- ++ (0, 15em) node (vertaxis) {};
   \draw (vertaxis) ++ (0,1pc) node {$s$};
   \draw (-16pc, -8em)
   ++ (0,3.5em) node{\textit{\tiny 0}}
   ++ (0,4.5em) node{\textit{\tiny 1}}
   ++ (0,4.5em) node{\textit{\tiny 2}};
  \end{tikzpicture}   
\caption{Low-degree terms on the $E_1$ page of the $T$-algebra spectral sequence, interpreted as homotopy classes of lifts.}
\end{figure}
\subsection{Theorem B: Quillen cohomology and the
  \texorpdfstring{$E_2$}{E\_2}-term}
\label{sec:methodology}

The purpose of this section is to prove \refthmB, which
gives criteria for obtaining an algebraic description of the $E_2$ term from
\Cref{thm:obstructions}. 

\begin{thmB}
Let $T$ be a simplicial Quillen monad acting on a simplicial model category $\cC$ such that $T$ commutes with geometric realization. Let $X,Y\in \cC_T$. Suppose that bar cofibrant replacements exist in $\cC_T$ so that, without loss of generality, we can assume $X$ and $Y$ are bar cofibrant and fibrant respectively. Moreover, assume that the derived mapping spaces $\cC^d(T^{n}UX, UY)$ have the homotopy type of $H$-spaces and there is a functor
\[ 
\pi_*\colon \ho\cC\rightarrow \cD 
\] 
such that 
\begin{hypothenumerate}
  \item \label{item:universal-coefficient} The associated map 
  \[ 
  \pi_*\colon \ho \cC(T^{s}UX,UY^{S^t}) \to 
  \cD(\pi_* T^{s}UX,\pi_*UY^{S^t}) 
  \] 
  is an isomorphism for all $s,t\geq 0$.

  \item \label{item:algebraic-description} There is a natural
  isomorphism $\pi_*TX\cong T_{\alg}\pi_*X$ for a monad $T_{\alg}$
  compatible with the structure homomorphisms of $T$ and $T_{\alg}$.

  \item \label{item:theories} The categories $\cD$ and
  $\cD_{T_{\alg}}$ are categories of $\Set$-valued models for some graded
  algebraic theories (i.e., they are algebraic categories in the sense of
  Quillen).

  \item \label{item:abelian-groups} For $t\geq 1$, $\pi_*Y^{S^t}$ is
   naturally an abelian group object in the category of
   $T_{\alg}$-algebras over $\pi_* Y$. 
\end{hypothenumerate}
Then the $E_2$ term of the $T$-algebra spectral sequence 
exists and can be identified as follows:
\begin{align*}
  E_2^{0,0} &\cong \cD_{T_{\alg}}(\pi_* UX, \pi_* UY)\\
  E_2^{s,t} &\cong H^{s}_{Q, \pi_* UY}(\pi_*UX;\pi_*UY^{S^t}) \quad
  \text{ for }t> 0.
\end{align*}
Here the cohomology groups on
the second line are the associated Quillen cohomology groups of our
$T_{\alg}$-algebra $\pi_* X$ viewed as an algebra over $\pi_* Y$
via a choice of an element in $E_2^{0,0}$ as in
\Cref{def:andre-quillen-cohomology}.

\end{thmB}
\begin{proof}
First we identify $E_2^{0,0}$: As remarked in the proof of \refthmA, this is computed by the equalizer
\[ 
E_2^{0,0} = \eq\big(\ho \cC(UX,UY)\rightrightarrows
\ho \cC(TUX,UY)\big)
\] 
where the
morphisms are induced by the $T$-algebra structures on $X$
and $Y$ respectively. Using the isomorphisms from condition
\eqref{item:universal-coefficient} we obtain the identification
\[ 
E_2^{0,0}\cong \eq\big(\cD(\pi_*UX,\pi_*UY)\rightrightarrows
\cD(\pi_*TUX,\pi_*UY)\big).
\] 
Applying  $\pi_*$ and
condition \eqref{item:algebraic-description} we obtain
\[ 
E_2^{0,0}\cong
\cD_{T_{\alg}}(\pi_*UX,\pi_*UY)\cong
\eq\big(\cD(\pi_*UX,\pi_*UY)\rightrightarrows
\cD(T_{\alg}\pi_*UX,\pi_*UY)\big).
\] 

Now to identify the remainder of the $E_2$ term we pick a map $f\colon
UX\rightarrow UY$ representing some element $[f]\in
E_2^{0,0}$. By \refthmA for $t>0$ 
\[
E_2^{s,t} \cong
\pi^{s}\pi_t\big(\cC(T^{\bul}UX,UY),f\big).
\] 
Since the hypotheses of \Cref{thm:obstructions} are satisfied
\[
\pi_t\big(\cC(T^{\bul}UX,UY),f\big)
\] 
is a cosimplicial abelian group for $t>0$ and the cohomotopy group
$\pi^s$ can be calculated as the $s$th cohomology group of the
associated Moore cochain complex. By the discussion in
\Cref{sec:observations-e1} we see
\[ 
\pi_t\big(\cC(T^{s}UX, UY),f\big)\cong 
\ho\cC_{\downarrow UY}(T^{s}UX,UY^{S^t}). 
\] 
Applying the homotopy invariant functor $\pi_*$ and conditions
\eqref{item:universal-coefficient} and
\eqref{item:algebraic-description} we obtain
\begin{align*} 
  E_2^{s,t} & \cong H^s \bigg(
  \cD_{\downarrow \pi_*Y}(T_{\alg}^\bul \pi_*UX,\pi_*UY^{S^t})
  \bigg)\\
  & \cong H^s \bigg(
  \cD_{T_{\alg} \, \downarrow \pi_*Y}(F_{T_{\alg}}T_{\alg}^\bul \pi_*UX, \pi_*UY^{S^t})
  \bigg)
\end{align*}
where the last isomorphism uses the fact that $\pi_* UY^{S^t}$ is a $T_{\alg}$-algebra
over $\pi_* Y$.  These cohomology groups are, by definition, the
\emph{cotriple cohomology groups} of $\pi_*UX$ with respect to the
cotriple associated to the monad $T_{\alg}$.

To complete the proof we identify the cotriple cohomology with 
Quillen cohomology using \eqref{item:theories}, \eqref{item:abelian-groups}, and \cite[\S~II.5 Thm.~5]{Qui67}. 
\saveblockR{}%
\end{proof}

\subsection{Applicable contexts}\label{sec:verification-of-examples}
This section will be devoted to demonstrating that the hypotheses of
\refthmA are satisfied in many categories of interest.
\subsubsection{Simplicial algebraic theories}
\begin{thm}
  If $T$ is a monad on $\sSet^S$ associated to an $S$-graded algebraic
  theory as in \Cref{sec:algebraic-theories}, then the $T$-algebra
  spectral sequence can be applied functorially to any $X,Y\in
  \sSet^S_T$.
\end{thm}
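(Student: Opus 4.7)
The plan is simply to verify the three hypotheses of \Cref{thm:main} for any $X, Y \in \sSet^S_T$ by assembling the relevant results already proved in the paper. Since none of the hypotheses involves $Y$ beyond its being a $T$-algebra (fibrant replacement is automatic in a model category), and the first two hypotheses do not involve $X$ at all, the main content is just checking (i), (ii), and that resolvability holds for \emph{every} $X$.

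First, I would invoke \Cref{prop:algebraic-theories-are-quillen} to conclude that the monad $T$ associated to an $S$-graded simplicial algebraic theory is Quillen, which gives hypothesis (i). Next, to establish hypothesis (ii), I would appeal to \Cref{rem:commuting-with-geometric-realization}: the formula in \eqref{eqn:monad-associated-to-a-theory} presents $T$ as a coend
\[
  TX = \int^{n\in\FinSet} X^{n}\times \cT(i(\underline{n}),i(\underline{1})),
\]
and the argument in that remark---using that finite products commute with geometric realization in $\sSet$ and Fubini for iterated coends---shows that any such coend-defined monad commutes with geometric realization. In the $S$-graded case the same argument applies levelwise in $\sSet^S$, since geometric realization, finite products, and coends all commute with the evaluation functors $\sSet^S \to \sSet$.

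Finally, hypothesis (iii) is immediate from \Cref{prop:sset-resolvable}, which asserts that for any Quillen monad acting on $\sSet^S$ every $T$-algebra is resolvable (the replacement can simply be taken to be $X$ itself, since the diagram $T^\bul UX$ is automatically Reedy cofibrant in $\sC^{\Delta_0^{\op}}$ by \Cref{lem:Delta0-Eilenberg-Zilber}). With all three hypotheses of \Cref{thm:main} verified, the $T$-algebra spectral sequence exists and has all the properties enumerated there.

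The only conceptual work here is the verification of hypothesis (ii); the other two reduce to citing results previously established. Since the hypotheses of \Cref{thm:main} are verified uniformly without any conditions on $X$ or $Y$ beyond being $T$-algebras, this completes the argument.
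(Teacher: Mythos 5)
Your proof is correct and follows exactly the same route as the paper's: cite \Cref{prop:algebraic-theories-are-quillen} for the Quillen hypothesis, \Cref{rem:commuting-with-geometric-realization} for commutation with geometric realization, and \Cref{prop:sset-resolvable} for resolvability. The extra detail you supply on the graded case and on Reedy cofibrancy via \Cref{lem:Delta0-Eilenberg-Zilber} is consistent with what the paper leaves implicit.
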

\begin{proof}
  By \Cref{prop:algebraic-theories-are-quillen} we see that $T$ is
  simplicial Quillen and $\sSet^S_T$ is a cofibrantly generated model category. Applying \Cref{prop:commuting-with-geometric-realization} with the decomposition from \eqref{eqn:monad-associated-to-a-theory} shows that $T$ commutes with geometric realizations. Finally since $\sSet_T^S$ is cofibrantly generated, it admits functorial cofibrant and fibrant replacements. It follows from \Cref{prop:sset-resolvable} that $\sSet_T^S$ admits functorial bar cofibrant replacements.
\end{proof}

For example, by \Cref{rmk:operads-in-sSet-is-algebraic} we can apply the
$T$-algebra spectral sequence to analyze spaces of operad maps. Since
the space of operad maps from an operad $\sO$ to the endomorphism operad
of an object $X$ (when defined) is in correspondence with the space of algebra
structures on $X$ \cite{Rez96}, one could, in principle, use this spectral
sequence to analyze algebra structures on $X$. 

\subsubsection{\texorpdfstring{$G$}{G}-actions}\label{sec:g-objects-are-admissible}\ \newline
For the following result one can use any of the standard cofibrantly
generated models for the category of spectra which is enriched in spaces, whose relative cell complexes are monomorphisms, and such that the tensor product of a subcellular inclusion of spaces
with a cellular spectrum is naturally a subcellular inclusion of spectra. 

\begin{prop}
  Let $G$ be a topological group admitting a cellular structure such that the
  inclusion of the unit $e\rightarrow G$ is the inclusion of a sub-complex. Let $TX=G_+\wedge X=\Res_e^G \Ind_e^G X$ be the
  monad on pointed spaces/spectra whose algebras are pointed $G$-spaces/$G$-spectra.
  Then the $T$-algebra spectral sequence can be applied functorially to any $X,Y$ in these categories.
\end{prop}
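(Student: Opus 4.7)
The plan is to verify, one by one, the three hypotheses of \Cref{thm:main} for the monad $T = G_+ \wedge (-)$, making systematic use of the results developed earlier in \Cref{sec:verification-of-examples} and \Cref{sec:reedy}.

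First, I would show that $T$ is Quillen. Since we work in a pointed closed symmetric monoidal category (either based spaces or one of the standard closed symmetric monoidal models of spectra), the functor $T = G_+\wedge(-)$ is a left adjoint and hence commutes with all colimits; in particular the hypotheses of \Cref{prop:bicomplete-cat-algebras} are met, so $\cC_T$ is a bicomplete simplicial category. To right-induce a cofibrantly generated simplicial model structure via \Cref{prop:model-cat-algebras}, the smallness conditions on $F_T I$ and $F_T J$ are automatic for the standard choices of generating (acyclic) cofibrations in these categories. The remaining requirement is that $U$ sends $F_T J$-cell complexes to weak equivalences; this follows because smashing the generating acyclic cofibrations with the cofibrant object $G_+$ produces acyclic cofibrations in $\cC$, and the pushouts and transfinite compositions involved are preserved by $U$. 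Thus weak equivalences in $\cC_T$ are detected by $U$, as required by \Cref{def:quillen}.

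Second, $T$ commutes with geometric realization. This is immediate: in a closed symmetric monoidal category the functor $G_+\wedge(-)$ is a left adjoint and therefore preserves the coend computing $|{-}|_\cC$; combined with \Cref{prop:two-different-realizations} this also shows that $U$ commutes with geometric realization of simplicial $T$-algebras.

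Third, for resolvability I would apply \Cref{prop:cellular-resolvable}. Take $\widetilde X$ to be a cellular cofibrant replacement of $X$ in $\cC_T$, so that $U\widetilde X$ is cellular in $\cC$ (relative cell complexes are monomorphisms in these categories, as required). We must verify that for any cellular $M \in \cC$, the object $TM = G_+\wedge M$ is cellular and the unit $M = S^0 \wedge M \to G_+\wedge M$ is a subcellular inclusion. By hypothesis $G$ has a cellular structure with the unit $\{e\}_+ \hookrightarrow G_+$ an inclusion of a subcomplex, and by our standing assumption on the model of spectra the smash of a subcellular inclusion of spaces with a cellular object yields a subcellular inclusion; applying this to $\{e\}_+ \hookrightarrow G_+$ smashed with $M$ gives precisely what we need for both assertions. \Cref{prop:cellular-resolvable} then concludes that $X$ is resolvable with replacement $\widetilde X$.

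The only non-formal step is the last: producing a cellular structure on $G_+\wedge M$ from those on $G_+$ and $M$, and checking that the unit is a subcomplex inclusion. This is exactly what the compatibility axiom on the chosen model of spectra (stated immediately before the proposition) is designed to supply, so once that axiom is invoked the proof becomes a straightforward assembly of \Cref{prop:bicomplete-cat-algebras}, \Cref{prop:model-cat-algebras}, and \Cref{prop:cellular-resolvable}.
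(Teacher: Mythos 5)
Your proposal is correct and follows essentially the same route as the paper: verify the three hypotheses of \Cref{thm:main} by (i) right-inducing the model structure via \Cref{prop:model-cat-algebras}, (ii) noting that $G_+\wedge(-)$ commutes with geometric realization, and (iii) invoking \Cref{prop:cellular-resolvable} together with the assumption that the unit $\{e\}_+\hookrightarrow G_+$ is a subcomplex inclusion. The only (immaterial) difference is that the paper verifies the acyclicity condition for \Cref{prop:model-cat-algebras} by citing the fibrant-replacement criteria of \Cref{rem:model-category-extra-conditions}, whereas you argue directly that smashing the generating acyclic cofibrations with the cofibrant object $G_+$ yields acyclic cofibrations; both are standard and valid.
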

\begin{proof}
  It is well known and straightforward to show using
  \Cref{prop:model-cat-algebras} and
  \Cref{rem:model-category-extra-conditions} that $T$ is simplicial Quillen. Since
  geometric realization commutes with smash products in either of these
  categories we see that $T$ commutes with geometric realization. Since
  the unit transformation applied to cellular spectra gives an inclusion
  of subcomplexes, by \Cref{prop:cellular-resolvable} we see that the equivariant cellular replacement of $X$ is bar cofibrant.
\end{proof}

In the case of pointed $G$-spaces or $G$-spectra the $T$-algebra spectral sequence
takes a familiar form. The bar resolution of a CW-complex $X$ is
the standard cofibrant replacement $B(G_+, G_+, X)\rightarrow X$ in the
`Borel' or projective model structure on $G$-objects. We emphasize that, although this is a common model structure to work with, it is not the standard model structure used in equivariant homotopy theory (cf.~\cite{May96}). In particular, the cofibrant replacement of $S^0$ in this model structure is $EG_+$, which is often infinite dimensional. 

Note that $G$ acts levelwise on the left of the simplicial bar construction and that the left copy of $G_+$ in each degree of the bar construction is actually notation for the left adjoint $\Ind_e^G$. The standard equivariant equivalence $\Ind_e^G(Y)\wedge X\cong \Ind_e^G(Y\wedge \Res_e^G X)$ \cite[(I.1.6)]{May96} induces a $G$-equivariant isomorphism between ${EG_{\bul}}_+\wedge X$ with its diagonal $G$-action and $B_\bul(G_+,G_+, X)$ with $G$ acting on the left. So the $T$-algebra spectral sequence computing the homotopy groups of the derived space of $G$-maps, in the projective model structure, between $X$ and $Y$ becomes a homotopy fixed point spectral sequence. More specifically the spectral sequence computes the homotopy groups of $F(X^c,Y^{f})^{hG}\cong F(EG_+\wedge X^c,Y^{f})^G$ where $F(X^c,Y^{f})$ is the corresponding $G$-space of maps and $X^c$ and $Y^{f}$ are functorial cofibrant and fibrant replacements of $X$ and $Y$ respectively. 

\begin{remark}\label{rem:homotopy-G-spaces}
  As one might expect, the homotopy $G$-spaces/spectra (i.e., the homotopy
  $T$-algebras for $T$ as above) will correspond to those spaces/spectra
  which admit a $G$-action in the homotopy category. Morphisms of
  homotopy $G$-spaces/spectra are maps in the homotopy category which
  commute with the $G$-action. In particular, any $G$-map which is
  non-equivariantly null-homotopic is necessarily trivial in the
  category of homotopy $G$-spaces (see \Cref{sec:g-action}).
\end{remark}

\subsubsection{Algebras over operads}
As in \Cref{sec:monads-from-operads}, we will continue to assume that all of our operads are defined in simplicial sets. We will say an operad is cofibrant if it is cofibrant in the model structure associated to the graded simplicial algebraic theory discussed in \Cref{rmk:operads-in-sSet-is-algebraic}. This is the same model structure considered in \cite[\S~3.3.1]{BeM03}. 

Suppose that $\cC$ is a simplicial symmetric monoidal model category satisfying the hypotheses of \Cref{prop:algebras-over-an-operad-is-bicomplete}. As shown in \Cref{sec:monads-from-operads}, this assumption implies that the category of $\sO$-algebras in $\sC$ is equivalent to a simplicial category of $T_{\sO}$-algebras. We will say that $\sO$ is \emph{admissible} if $T_{\sO}$ is a simplicial Quillen monad such that $\sC_{T_{\sO}}$ has a right induced model structure. 

\begin{prop}\label{prop:cofibrant-operad}
  Let $\cC$ be a symmetric monoidal simplicial model category
  satisfying the hypotheses of
  \Cref{prop:algebras-over-an-operad-is-bicomplete}.
  Let $T$ be the monad associated to a cofibrant admissible operad and suppose that
  \begin{hypothenumerate}
  \item geometric realization commutes with the symmetric monoidal
    structure on $\cC$ and
  \item one of the following conditions holds:
  \begin{hypothenumerate}
  \item The underlying category $\cC$ is $\sSet^{S}$ for some set $S$.   
  \item $\cC$ is cofibrantly generated, relative cell complexes in
    $\cC$ are monomorphisms, for each cellular $X\in \cC$ the unit map
    $X\rightarrow TX$ is a cellular inclusion, and the object
    underlying each cellular $T$-algebra is a
    cellular object of $\cC$.
  \item $\cC$ is pointed, for each cofibrant $X\in \cC$ the unit map
    $X\rightarrow TX$ is a cofibration and the inclusion of a summand,
    and the the object underlying each cofibrant $T$-algebra is
    cofibrant in $\cC$.
  \end{hypothenumerate}
  \end{hypothenumerate}
  Then the $T$-algebra spectral sequence can be applied to any $X,Y\in
  \cC_T$. Moreover if $\cC_T$ is cofibrantly generated, then the
  spectral sequence is functorial in $X$ and $Y$.
\end{prop}
\begin{proof}
  \Cref{prop:algebras-over-an-operad-is-bicomplete}
  shows that $\cC_T$ is a bicomplete simplicial category. By definition of admissibility, $T$ is simplicial Quillen.  Since geometric realization commutes with the monoidal structure, we can apply \Cref{prop:commuting-with-geometric-realization} to \eqref{eq:monad-associated-to-operad} and see that $T$ commutes with geometric realization.

  %Rezk's argument specifically shows that a cofibrant replacement in T-algebras is a retract of a cofibrant replacement in the underlying category. So any cellular/cofibrant replacement is automatically underlying cofibrant.
  Since our operad is admissible cofibrant we can replace any $T$-algebra by one which
is cofibrant in $\cC$ by the argument in \cite[Rem.~13.3]{Rez97}
(cf.~\cite[Thm.~3.5(b)]{BeM03}). Finally by the remaining hypothesis we can
apply either \Cref{prop:sset-resolvable}, \Cref{prop:cellular-resolvable},
or \Cref{prop:summand-resolvable} to see that the cofibrant/cellular replacement of any $T$-algebra is bar cofibrant. 
\end{proof}

\begin{prop}\label{prop:bv-operad}
  Let $\cC$ be a pointed symmetric monoidal simplicial model category
  satisfying the hypotheses of
  \Cref{prop:algebras-over-an-operad-is-bicomplete}.
  Let $T$ be a monad acting on $\cC$ and suppose that
  \begin{hypothenumerate}
  \item\label{it:compat-with-realization} geometric realization commutes
  with the symmetric monoidal structure on $\cC$ and
  \item $T$ arises from an admissible operad $W\sO$, where $W\sO$ is
    the Boardman-Vogt cofibrant replacement of an
    operad $\sO$ (see \cite{BeM06}) such that $\sO(0)=\sO(1)=*$.
  \end{hypothenumerate}
  Then the $T$-algebra spectral sequence
  can be applied to any $X,Y\in \cC_T$.  
\end{prop}
\begin{proof}
  We will apply \Cref{prop:cofibrant-operad} using the hypotheses that
  $\cC$ is pointed and that the unit map is a cofibration and the inclusion of a
  summand. As shown in \cite{BeM06} the Boardman-Vogt construction
  yields a functorial cofibrant replacement of our operad. We will show in 
  \Cref{lem:W-constr-is-pt-in-deg-one} that $(W\sO)(1)=*$, so the unit
  map $X\rightarrow TX$ is always the inclusion of a summand.
  
  Since $W\sO$ is cofibrant, by replacing $X$ with a cofibrant replacement
  if necessary, we can assume $X$ is cofibrant in $\cC$ by
  the argument in \Cref{prop:cofibrant-operad}. Since $\cC$ is a symmetric monoidal
  model category it is straightforward to apply the pushout-product
  axiom and induction on $n$ to see that $X^{\otimes n}$ is cofibrant.
  Finally, since our cofibrant operad is $\Sigma$-cofibrant \cite[\S~2.4]{BeM06} 
  we see that $W\sO(n)\otimes X^{\otimes n}$ is a retract
  of a cellular complex built with free $\Sigma_n$-cells. It follows
  that $W\sO(n)\otimes_{\Sigma_n} X^{\otimes n}$ is cofibrant which in
  turn implies $TX$ is cofibrant.
\end{proof}

\begin{lem}
  \label{lem:W-constr-is-pt-in-deg-one}
  Suppose that $\sO$ is an operad in $\sSet$ such that $\sO(0) = \sO(1) = *$.
  Then $W\sO(1) = *$.
\end{lem}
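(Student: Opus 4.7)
The plan is to use the tree-based description of the Boardman-Vogt $W$-construction from \cite{BeM06} to collapse every decorated $1$-tree down to the trivial tree.

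First I recall that $W(H,\sO)(n)$ is built as a coend indexed over the category of $n$-trees, where each tree $T$ contributes the space $\sO(T) \otimes H^{E(T)}$ with $\sO(T) = \bigotimes_v \sO(|v|)$ ranging over the internal vertices and $E(T)$ the set of internal edges; the coend identifications turn a length-$0$ internal edge into the operadic composition across that edge. Any $1$-tree has a unique spine from its leaf to its root, and since there are no other leaves, every vertex off the spine belongs to a subtree whose topmost vertices must be stumps (arity-$0$ vertices).

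Since $\sO(0) = \sO(1) = *$, stumps and unary vertices carry unique labels, so the only potentially nontrivial data on a decorated $1$-tree comes from vertices of arity $\geq 2$ and the edge lengths. I then argue by induction on the number of internal edges that every decoration is identified in the coend with the decoration of the trivial tree: contracting an internal edge into a stump invokes the coend relation to compose a vertex of arity $k$ with an element of $\sO(0) = *$, yielding a vertex of arity $k-1$ and strictly reducing the complexity of the tree; likewise, contracting an edge between two unary vertices produces a unary vertex labeled in $\sO(1) = *$. Combined with the $W$-construction's length-$1$ identifications, which absorb unary identity vertices, this brings every positive-length decoration into contact with a length-$0$ configuration, so the induction terminates at the trivial tree.

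The main obstacle is to confirm that these coend relations actually suffice for the \emph{literal} equality $W\sO(1) = *$ rather than merely a contractible space. This requires checking the conventions of \cite{BeM06} carefully — in particular the pointed interval structure on $H$ and the precise behavior at unary identity vertices — so that every decorated $1$-tree is genuinely in the same coend class as the trivial tree; one may need to invoke the reduced-tree description directly.
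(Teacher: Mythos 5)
Your overall strategy---analyzing decorated $1$-trees and using the relations of the $W$-construction to collapse them---is in the right spirit, but the inductive step contains a genuine gap, and it is exactly the gap you flag at the end without closing. The coend identification that composes across an internal edge applies only when that edge has length $0$. So ``contracting an internal edge into a stump'' to absorb an arity-$0$ vertex is not a legitimate move for an edge of positive length: a decoration consisting of a vertex of arity $k\ge 2$ with one input capped by the point of $\sO(0)$ and joined by an edge of length $t>0$ is a genuinely new point of the coend, not identified with anything of lower complexity. The unit relation (deletion of unary vertices labeled by the identity) does not help here, since a stump is nullary, and ``bringing a positive-length decoration into contact with a length-$0$ configuration'' only shows path-connectedness, not the point-set equality $W\sO(1)=*$ that the lemma asserts and that \Cref{prop:bv-operad} actually needs (the unit map must be the literal inclusion of a summand). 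So your induction does not terminate as claimed whenever stumped branchings are permitted, and your closing paragraph defers precisely this issue to ``checking conventions.''

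The paper's proof avoids the difficulty by working with the skeletal filtration
$\sO(1)=W_0(H,\sO)(1)\to W_1(H,\sO)(1)\to\cdots$ and the attaching pushout of \cite{BeM06} rather than with an ad hoc coend induction: for a tree $G$ with a single input edge all vertices are univalent, so (since $\sO(1)=*$) every vertex label is degenerate, the subobject $(H\otimes\sO)^-(G)$ coincides with $(H\otimes\sO)(G)$, each pushout is along an isomorphism, and hence $W\sO(1)=W_0(H,\sO)(1)=\sO(1)=*$. To repair your argument you would need to supply the analogue of that observation---either justify from the conventions of \cite{BeM06} that no stumped trees contribute to arity $1$, or otherwise show that the entire decoration space of every arity-$1$ tree already lies in the degenerate part---rather than attempting to contract positive-length edges one at a time.
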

\begin{proof}
  The result follows immediately from the construction of $W\sO$ in
  \cite{BeM06} and we use the notation therein. Namely, under the given
  hypotheses all of the maps in the sequential colimit
  \[
  W(H,\sO)(n) = \colim \big(\sO(n) = 
  W_0(H, \sO)(n) \to W_1(H, \sO)(n) \to \cdots \big)
  \]
  are isomorphisms when $n = 1$ ($H$ is the unit interval here).  To
  see this, one observes that the right-hand (and therefore left-hand)
  vertical maps in the pushout \cite[(13)]{BeM06} are isomorphisms for
  $n = 1$: For trees $G$ with a single input edge, the objects
  $\underline{\sO}(G)$ and $\underline{\sO}^-(G)$ are equal (all
  vertices of $G$ are univalent, and if $\sO(1) = *$ then
  $\underline{\sO}_c(G) = \underline{\sO}(G)$ for any subset of
  univalent vertices $c$).  As an aside, note that this implies the
  vertical arrows in the pushout diagram at the end of
  \cite[\S~3]{BeM06} are isomorphisms for $n = 1$, and hence
  $\sF_*(\sO)(1) = \sO(1) = *$.  Moreover, this implies $(H \otimes
  \sO)^-(G) = H(G) \otimes \sO^-(G)$.  Therefore the vertical maps in
  \cite[(13)]{BeM06} are isomorphisms and $W(H,\sO)(1) = W_0(H, \sO)(1)
  = \sO(1)$.
\end{proof}

Let $R$ be a commutative ring spectrum. For the following corollary one
can use any symmetric monoidal category of $R$-modules satisfying
the conditions of \Cref{prop:algebras-over-an-operad-is-bicomplete}
and condition \eqref{it:compat-with-realization} of \Cref{prop:bv-operad}. These
conditions are easily verified in the standard cases such as
those of \cite{EKMM97,HSS00,MaM02}.

\begin{cor}
  Suppose $T$ is the monad associated to the Boardman-Vogt replacement
  of either the associative or the commutative operad (so it is an
  $A_\infty$ or $E_{\infty}$ operad) acting on $\RMod$. Then the
  $T$-algebra spectral sequence can be applied to
  any $T$-algebras $X$ and $Y$.
\end{cor}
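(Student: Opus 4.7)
The plan is to verify the hypotheses of \Cref{prop:bv-operad}, which will immediately give the conclusion. There are three hypotheses to check: that $R\dMod$ satisfies the conditions of \Cref{prop:algebras-over-an-operad-is-bicomplete}, that geometric realization commutes with the symmetric monoidal structure on $R\dMod$, and that our operad arises as the Boardman-Vogt replacement $W\sO$ of an admissible operad $\sO$ with $\sO(0) = \sO(1) = *$.

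The first two hypotheses are assumed in the statement of the corollary as part of the choice of model for $R\dMod$, so only the third requires comment. Writing $\sO$ for either the associative operad $\Ass$ or the commutative operad $\Comm$, we have $\Ass(0) = \Ass(1) = *$ (since $\Sigma_0 = \Sigma_1 = *$) and $\Comm(n) = *$ for all $n$, so in both cases the hypothesis $\sO(0) = \sO(1) = *$ is satisfied. The admissibility of the Boardman-Vogt replacements $W\Ass$ and $W\Comm$ in the standard monoidal model categories of $R$-modules (\cite{EKMM97,HSS00,MaM02}) is well-established; see \cite{BeM03,BeM06}.

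Given these verifications, \Cref{prop:bv-operad} applies directly to $T = T_{W\sO}$ acting on $R\dMod$, showing that any pair of $T$-algebras $X, Y$ satisfies the three hypotheses of \Cref{thm:main}: $T$ is Quillen, $T$ commutes with geometric realization, and every $T$-algebra is resolvable. The main obstacle in this chain of implications was already dispatched in \Cref{lem:W-constr-is-pt-in-deg-one}, where the hypothesis $\sO(1) = *$ was used to ensure $W\sO(1) = *$ so that the unit map $X \to TX$ is the inclusion of a summand on cofibrant objects, a key input to \Cref{prop:summand-resolvable}.
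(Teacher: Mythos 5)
Your proof is correct and follows exactly the route the paper intends: the paper offers no explicit proof of this corollary, but the preceding paragraph makes clear that it is a direct application of \Cref{prop:bv-operad}, with hypotheses (i) and (ii) absorbed into the choice of model for $R\dMod$ and hypothesis (iii) supplied by $\Ass(0)=\Ass(1)=\Comm(0)=\Comm(1)=*$ together with admissibility of the Boardman--Vogt replacements. Your closing remark tracing the role of \Cref{lem:W-constr-is-pt-in-deg-one} and \Cref{prop:summand-resolvable} matches the paper's own chain of reasoning.
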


\section{Computations}\label{sec:computations}
\subsection{\texorpdfstring{$G$}{G}-actions}\label{sec:g-action}
The next two examples provide, respectively, an example of a
non-trivial $G$-map which is trivial as a homotopy $G$-map and an
example of a non-trivial homotopy $G$-map which does not
lift to a $G$-map. We emphasize that we are working in the projective model structure for $G$-objects discussed in \Cref{sec:g-objects-are-admissible} and as a consequence the derived space of equivariant maps is modeled by the homotopy fixed points of the underlying derived space of maps.  

\begin{example}
  Regard $\bR$ as a $C_2$-space via the sign action.
  Then applying one point compactification to the inclusion
  \[\{0\}\rightarrow \bR\] yields an essential map
  \[e_{\sigma}\colon S^0\rightarrow S^{\sigma}\] 
  of pointed $C_2$-spaces.

  We use the trivial map as a base point for the $T$-algebra spectral sequence computing the derived space of pointed $C_2$-equivariant maps between $S^0$ and $S^\sigma$. We can identify the $E_2$ term as follows:
   \footnote{Normally instability, e.g., actions of the
     fundamental group, prevents getting such a simple description of
     the $E_2$ term, however in this case $S^\sigma$ is
     non-equivariantly an Eilenberg-MacLane space for $\bZ$ and so the
     second half of the refinements in \Cref{rem:obstructions} apply.}:
  \[ 
  E_2^{s,t}=H^s(C_2;\pi_{t} S^{\sigma})\Longrightarrow
  \pi_{t-s}(S^{\sigma})^{hC_2}. 
  \]  
  As noted in \Cref{rem:homotopy-G-spaces}, $e_{\sigma}$ must
  represent the trivial map in the category of pointed homotopy $C_2$-spaces.
  The spectral sequence confirms this since
  $E_2^{0,0}=(\pi_0S^{\sigma})^{C_2}=0$. In fact, since the homotopy
  groups of $S^1$ are concentrated in degree 1, this spectral sequence
  is concentrated on the line $t=1$ and necessarily collapses at
  $E_2$. Computing the group cohomology with coefficients $\pi_1(S^\sigma)\cong\bZ$ twisted by the sign action, we see
  that the only non-zero contribution is from $E_2^{1,1}=\bZ/2$,
  which detects the essential map $e_{\sigma}$ above.
\end{example}

\begin{example}
  Let $C_2$ act on $KU$ via complex conjugation. The $C_2$-action on
  $\pi_* KU$ is trivial precisely on those homotopy groups generated
  by even powers of the Bott map. In particular, if we regard $S^4$ as
  having a trivial $C_2$ action we obtain a non-trivial map
  \[\beta^2\colon S^4\rightarrow KU \] in the category of
  homotopy $C_2$-spectra. 
   
  The $T$-algebra spectral sequence computing
  the homotopy groups of the space of $C_2$-equivariant maps from $S^0$
  to $KU$ is a connective cover of the homotopy fixed point spectral
  sequence (see \Cref{fig:ku-ss}). The latter spectral sequence converges to the
  homotopy of $KO$ and there is a well-known differential
  $d_3(\beta^2)=\eta^3$ (see \cite[Prop.~5.3.1]{Rog08}) which follows from a comparison with the
  Adams-Novikov spectral sequence and the relation $\eta^4=0$ in $\pi_* S$.  
  \begin{figure}  
  \begin{center}
    \pgfimage[height=3.2cm]{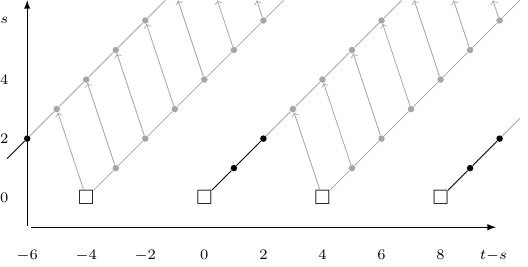}\quad \quad \quad \pgfimage[height=3.2cm]{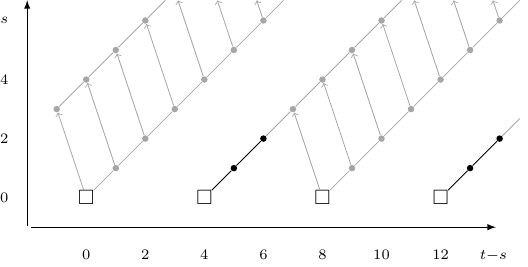}
    \end{center}
    \caption{\label{fig:ku-ss} Left: The homotopy fixed-point spectral
      sequence for $\pi_*(KU^{hC_2})$.  Right: the $T$-algebra spectral sequence for $\pi_* \Spectra^d_{C_2}(S^4, KU)$.}
  \end{figure}
  Since the $T$-algebra spectral sequence
  computing $\pi_* \Spectra_{C_2}^d(S^4,KU)$ is just a shift of the $T$-algebra spectral sequence computing $\pi_*\Spectra_{C_2}^d(S^0,KU)$, we see that the element $\beta^2\in E_2^{0,0}$
  supports a $d_3$ and does not lift to a map of $C_2$-spectra.
\end{example}

\subsection{Algebras over an operad in spectra}\label{sec:examples-operads}
\ 
\subsubsection*{$A_\infty$-algebras in $Hk$-modules}

For one example of \refthmB in action, let $k$ be a field
and $T$ the monad
\[ 
X\mapsto TX=\bigvee_{n\geq 0} K_{n} \otimes X^{\wedge_{Hk}n}
\] 
on $Hk$-module spectra associated to the $A_\infty$ operad.  Conditions
\eqref{item:universal-coefficient} and
\eqref{item:algebraic-description} from \refthmB follow from \Cref{lem:HR-mod-vs-R-mod-pi-closed,prop:a-infty-identification} respectively. There it is shown that
\[ 
T_{\alg}\pi_* X\cong \bigoplus_{n\geq 0} (\pi_*X)^{\otimes_k n} 
\] 
is a monad on graded $k$-modules whose algebras are graded
associative $k$-algebras.

Of course graded $k$-modules and graded associative $k$-algebras satisfy
condition \eqref{item:theories}.  In the category of associative
algebras over $\pi_* Y$, the abelian group objects are the square-zero
extensions of $\pi_* Y$ such as $\pi_*Y^{S^t}\cong \pi_* Y\oplus
\pi_*\Sigma^{-t}Y$ and hence condition \eqref{item:abelian-groups} from
\refthmB is satisfied.

So we obtain a spectral sequence 
\[ 
E_2^{s,t} \Longrightarrow
\pi_{t-s}
A_{\infty}\HkAlg^d(X,Y)
\]  
such that 
\[
E_2^{0,0}=\kAlg(\pi_*X,\pi_*Y)
\] 
and 
\[
E_2^{s,t}=H^{s}_{Q,\pi_*Y}(\pi_*X, \pi_*Y^{S^t}) \quad \text{ for } t>0,
\] 
where the cohomology groups are calculated in the category of graded
associative $k$-algebras over $\pi_*Y$. For $s=0$ these can be
identified with the derivations of $\pi_* X$ into $\pi_{*+t}Y$ \cite[\S 18]{Rez97} and for
$s>0$ these are the Hochschild cohomology groups
\[ 
HH^{s+1}(\pi_* X;\pi_{*+t}Y)\cong 
\Ext^{s+1}_{\pi_* X\otimes_{k}(\pi_*X)^{\op}}(\pi_* X, \pi_{*+t}Y)
\] 
of $\pi_* X$ with coefficients in $\pi_{*+t}Y$ \cite[Prop.~3.6]{Qui70}. Here $\pi_{*+t}Y$ obtains  a $\pi_*X\otimes_k (\pi_*X)^\op$-module structure by pulling back the $\pi_*Y\otimes_k (\pi_*Y)^\op$-module structure along a fixed algebra map coming from $E_2^{0,0}$.

\begin{example}\label{ex:su-n}
  In the category of $Hk$-modules, consider the $A_{\infty}$ algebra
  $Hk\wedge \Sigma^{\infty}_+ \Omega SU(n+1)$ for each $n \ge 1$. The
  homotopy of this algebra is a polynomial algebra $R = R_n$ on
  generators $\{x_{i}\}_{1\leq i\leq n}$ with $|x_i|=2i$.  To
  compute the $A_{\infty}$ self-maps we take the augmentation map as a base point and apply the $T$-algebra spectral sequence. By the discussion above we obtain:
  \begin{align*}
   E_2^{0,0} &\cong   \kAlg(R,R)\cong
   \prod_{1\leq i\leq n} (R)_{2i}\\ 
    E_2^{0,t} & \cong  \Der(R;\Sigma^{-t}R)\cong 
   \prod_{1\leq i\leq n} (R)_{2i+t}\\ 
      E_2^{s,t} & \cong  \Ext^{s+1}_{R\otimes_k R^{\op}}(R,\Sigma^{-t}R)
      \quad \text{ for } s>0.
   \end{align*}
   %The Der term is maps over R: R-->R\oplus \Sigma^{-t}R, which are determined by where they send the x_{2i}. Each lands in (\epsilon x_{2i}, f(x_2i)), for some map f. This is extends uniquely to a derivation f(xy)=f(x)\epsilon{y}+ \epsilon{x}f(y) using the free structure.  So we just have to see what is in degree 2i of \Sigma^{-t}R=\pi_*+t R. when *=2i *+t=2i+t
   In particular, these groups are zero for $t$ odd, hence
   $E_{2i}=E_{2i+1}$.  The Hochschild cohomology groups can be
   calculated by first pulling back the $R\otimes_k R^{\op}=R\otimes_k R$ action to
   a different $R\otimes_k R$ action via the isomorphism defined by
   \[
   x_i \otimes 1 \mapsto x_i\otimes 1, \quad 
   1\otimes x_i \mapsto x_i\otimes 1 - 1\otimes x_i. 
   \]
   Since $1\otimes R\subset R\otimes R$ acts trivially on the source we obtain
   an $(R\otimes_k R)$-free resolution of $R$ by applying $R \otimes_k -$ to
   the Koszul resolution of $k$: 
   \[ 
   \left(\Lambda_k[\sigma x_{1}, \cdots,\sigma x_n]\otimes_k R
   \rightarrow k\right) \stackrel{R\otimes_k -}{\mapsto} \left(R\otimes_k \Lambda_k[\sigma x_{1}, \cdots,\sigma x_n]\otimes_k R
   \rightarrow R \right)   \]
   Here $\sigma x_{i}$ has bidegree $(1,2i)$ and $d(\sigma
   x_i)=1\otimes x_i$. Using this resolution and the fact that $1\otimes x_i$ acts by 0 on the target we see that
   \[ 
      \Ext^{*}_{R\otimes_k R^{\op}}(R,R) \cong (\Lambda_k[\sigma x_{1},
      \cdots,\sigma x_n])^*\otimes_k R.
   \]

   So the Hochschild cohomology groups vanish above
   cohomological degree $n$ and hence the $T$-algebra
   spectral sequence is concentrated on
   the first $n-1$ rows and must collapse at $E_n$ for $n\geq 2$. In
   particular, if $n=1$ then the spectral sequence collapses at $E_2$
   onto the 0-line.

   Using the vanishing results in the spectral sequence above, we see that there are no obstructions to lifting an arbitrary map of $k$-algebras 
   \[ 
   H_*\Omega SU(n+1)\rightarrow H_* \Omega SU(n+1) 
   \]
   to a map of $A_{\infty}$ algebras if $n\leq 3$ and such a map is
   unique up to homotopy if $n\leq 2$.  For $n = 1$ this result is
   expected since $\Omega SU(2)\cong \Omega \Sigma S^2$ is stably a free
   $A_{\infty}$ algebra. 
   \saveblockS{}%
\end{example}

The previous computation did not depend on the $A_\infty$ algebra $Hk
\wedge \Sigma^{\infty}_{+}\Omega SU(n+1)$ so much as the fact that its ring of
homotopy groups is polynomial on generators in even degrees. In
particular, there are no obstructions to lifting an abstract algebraic
isomorphism to an equivalence if there are sufficiently few generators:

\begin{prop}
  Let $R_n$ be a polynomial algebra on $n$ generators in even degrees.
  Then for $n\leq 3$, there is a unique $Hk$-algebra $V$ up to
  homotopy such that $\pi_* V\cong R_n$. In particular, all such algebras
  are weakly equivalent to the commutative $Hk$-algebra $HR_n$.
  \saveblockT{}%
\end{prop}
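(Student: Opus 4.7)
The plan is to apply the $A_\infty$ version of the $T$-algebra spectral sequence developed at the end of \Cref{sec:examples-operads} to lift an algebraic isomorphism $\pi_* V \cong \pi_* HR_n$ to a genuine $A_\infty$-map, which will automatically be a weak equivalence of $Hk$-modules, hence of $A_\infty$-algebras. The key observation is that the $E_2$-page identification of \Cref{ex:su-n} depends only on the graded $k$-algebra $R_n$ and not on any particular $A_\infty$-realization, so the computation transfers verbatim to an arbitrary $V$ with $\pi_* V \cong R_n$.

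First I would fix such a $V$ and set up the $T$-algebra spectral sequence computing $\pi_* A_\infty\HkAlg(V, HR_n)$, whose existence is guaranteed by the corollary following \Cref{prop:bv-operad}. Using \Cref{thm:identifying-e2} together with the computational lemmas of \Cref{sec:computational-lemmas} (whose verification of conditions \eqref{item:universal-coefficient}--\eqref{item:abelian-groups} is independent of $V$), the $E_2$-page is
\[
E_2^{0,0} \cong k\mathrm{Alg}(R_n, R_n), \qquad
E_2^{p,q} \cong \mathrm{HH}^{p+1}(R_n; \Sigma^{-q} R_n) \text{ for } p \geq 1,
\]
and \Cref{ex:su-n} shows $\mathrm{HH}^{>n}(R_n;-)$ vanishes (by the length-$n$ Koszul resolution) while every $E_2^{p,q}$ with $q$ odd vanishes by parity, since all generators of $R_n$ lie in even degrees.

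Next, choose any isomorphism of graded $k$-algebras $\alpha\colon \pi_* V \xrightarrow{\cong} R_n = \pi_* HR_n$, yielding a class $[\alpha] \in E_2^{0,0}$. By \Cref{thm:obstructions} the successive obstructions to realizing $[\alpha]$ by an honest $A_\infty$-map live in $E_2^{s+1,s}$ for $s \geq 1$, i.e.\ at positions $(p,q) = (s+1,s)$. A short case check shows every such group vanishes when $n \leq 3$: if $s$ is odd then $q = s$ is odd and the group dies by parity; if $s$ is even with $s \geq 2$ then $p+1 = s+2 \geq 4 > n$, so the Hochschild group vanishes by degree. Hence $[\alpha]$ lifts to an $A_\infty$-map $f\colon V \to HR_n$.

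Finally, since $\pi_*(f) = \alpha$ is an isomorphism, $f$ is a weak equivalence of $Hk$-modules and therefore a weak equivalence of $A_\infty$-$Hk$-algebras, giving $V \simeq HR_n$. The only substantive step is the obstruction vanishing, whose algebraic content is already in \Cref{ex:su-n}; the main conceptual (and very modest) obstacle is noting that the $E_2$-identification there depended only on the ring $R_n$ and so applies to any realization $V$.
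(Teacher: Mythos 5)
Your proposal is correct and is essentially the paper's intended argument: the paper derives this proposition directly from \Cref{ex:su-n} after observing that the $E_2$-identification depends only on $\pi_*$ being polynomial on even-degree generators, and the vanishing pattern you check (parity for odd $s$, Koszul length for even $s\ge 2$) is exactly what makes the obstruction groups $E_2^{s+1,s}$ vanish for $n\le 3$. The final step---a lift of an algebra isomorphism is a $\pi_*$-isomorphism, hence a weak equivalence of $A_\infty$-$Hk$-algebras since weak equivalences are created by the forgetful functor---is also as intended.
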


\ 
\subsubsection*{$E_\infty$-algebras in rational $Hk$-modules}

For the remainder of this section we will study $E_{\infty}$-algebra maps in the category of $Hk$-modules for some field $k$ of characteristic 0.  To identify the $E_2$ term we will again apply \refthmB. We first check the hypotheses. Conditions \eqref{item:universal-coefficient} and \eqref{item:algebraic-description} follow from \Cref{lem:HR-mod-vs-R-mod-pi-closed,prop:algebraic-description-for-e-infty-monad} respectively. The verification of condition \eqref{item:theories} is immediate from the identification of $T_{\alg}$ in \Cref{prop:algebraic-description-for-e-infty-monad} as the free graded commutative $k$-algebra monad. The verification of condition \eqref{item:abelian-groups} proceeds exactly as in the $A_{\infty}$-case above and the associated Quillen cohomology groups are the classical Andr\'e-Quillen cohomology groups for graded commutative $k$-algebras.

\begin{example}\label{ex:BU}
  If we allow $n$ to go to infinity in \Cref{ex:su-n} then $\Omega
  SU\simeq BU$ is an infinite loop space and consequently $Hk\wedge
  \Sigma^{\infty}_{+} \Omega SU$ is an $E_{\infty}$ algebra in
  $Hk$-modules. 
  
We have the following identification of the
  $E_2$-term, where 
  \[
  R=H_*(\Omega SU)\cong k[x_i]_{i\geq 1}
  \]
  and $\CommkAlg$ is the category of commutative $k$-algebras:
  \begin{align*}
   E_2^{0,0} &\cong  \CommkAlg(R,R)\cong
   \prod_{i\geq 1} (R)_{2i}\\ 
    E_2^{0,t} & \cong  \Der(R;\Sigma^{-t}R)\cong 
   \prod_{i\geq 1} (R)_{2i+t}\\ 
      E_2^{s,t} & \cong H_{AQ,k}^{s}(R;\Sigma^{-t}R) \quad \text{ for } t>0.
   \end{align*}
   
  Since $R$ is a polynomial algebra over $k$, it is smooth and by
  \Cref{prop:vanishing-conditions} all higher Andr\'e-Quillen cohomology
  groups vanish. As a consequence we see the spectral sequence
  collapses at $E_2$ onto the 0-line. Hence every map of homology rings
  lifts to a homotopically unique map of $E_{\infty}$ algebras in
  $Hk$-modules.
\end{example}

In general, free algebras have collapsing spectral sequences:
\begin{example}\label{ex:free}
  If $X=T M$ is a free $\ei$ ring spectrum then the unit map
  $X\rightarrow TX$ is a map of $\ei$ ring spectra and defines a
  section of the bar resolution. Consequently the $T$-algebra spectral sequence
  computing the homotopy of $\ei^d(X,Y)$ collapses at
  $E_2$ onto the 0-line. So in this case the edge homomorphism 
  $\pi_0 \ei^d(X,Y)\rightarrow H_{\infty}(X,Y)$ is an isomorphism.
  Moreover there is a homotopy equivalence $\ei^d(X,Y)\simeq \Spectra^d(M,Y)$.
\end{example}

\begin{rem}
Since rational localization is smashing, the extension functor from
$\ei$ algebras to $\ei$ algebras in $H\bQ$ modules is an equivalence for
every rational $\ei$ ring spectrum. From this we obtain for any rational
$\ei$ ring spectra $X$ and $Y$ 
\[
\ei^d(X,Y)\simeq  {\ei}\HQAlg^d(H\bQ\wedge X, Y)\simeq {\ei}\HQAlg^d(X,Y). 
\] 
So there is no difference homotopically between the space of
$\ei$ maps between two rational $\ei$ rings and the space of
$\ei$ algebra maps in $H\bQ$-modules.
\end{rem}

\begin{example}
  \label{ex:hopf-map}
  We will now construct infinitely many homotopically distinct $\ei$ maps that all induce the
  same $\hi$ map. For a space $X$, recall that the cotensor $H\bQ^X$ is
  an $\ei$ ring spectrum satisfying $\pi_* H\bQ^X\cong H^{-*}(X;\bQ)$. Now
  to calculate the homotopy groups of $\ei^d(H\bQ^{S^2}, H\bQ^{S^3})$ we
  apply the $T$-algebra spectral sequence and use the
  identification of the $E_2$-term above. As a base point we will take
  a `trivial' map $\epsilon$ of $\ei$ rings induced by a map of the form
  $S^3\rightarrow *\rightarrow S^2$.
  
Let $\Ind_* A$ denote the graded module of indecomposables of an augmented graded commutative algebra $A$.  To calculate the $E_2$ term we have 
  \[
  E_{2}^{0,0}\cong \CommQAlg \left( \pi_* H\bQ^{S^2}, \pi_{*} H\bQ^{S^3} \right)
  \cong \Ind_{-3} \left(\pi_*H\bQ^{S^2} \right)=0=\{\epsilon\}.
  \] 
  For $t>0$ we use the map $\epsilon$ to regard
  $\pi_*H\bQ^{S^{2}}$ as a commutative algebra over
  $\pi_*H\bQ^{S^3}$ and obtain 
  \[
  E_2^{s,t} \cong
  H_{AQ}^{s} \left( \pi_{*}H\bQ^{S^2};\pi_{*+t}H\bQ^{S^3} \right)\cong
  \ho (\sCommQAlg_{\downarrow \pi_*H\bQ^{S^3}}) \left( \pi_{*}H\bQ^{S^2},\pi_{*}H\bQ^{S^3}\oplus
  \Sigma^{s}_{\bQ}\pi_{*+t}H\bQ^{S^3} \right)
  \] 
  where the right-hand side is the derived homomorphisms of simplicial
  graded commutative $\bQ$-algebras over the simplicially constant graded algebra
  $\pi_*H\bQ^{S^3}$ into the square-zero extension of this algebra by
  the $s$th suspension of $\pi_{*+t}H\bQ^{S^3}$ in simplicial
  $\pi_{*}H\bQ^{S^3}$-modules. To simplify the notation, we will present the slightly more general calculation of the Andr\'e-Quillen cohomology of an exterior $\bQ$-algebra on an even-degree generator and for arbitrary coefficients. To calculate these derived
  homomorphisms we must first construct a cofibrant replacement of the source.

  The cofibrant replacement of an exterior algebra (viewed as a
  constant simplicial object) on an element in degree $2n$ can be
  constructed as a two stage cell complex: We take the free algebra
  $\bQ[e_{2n}]$ on an element of degree $2n$ and then cone off (in
  simplicial graded commutative $\bQ$-algebras) $e_{2n}^2$ via an
  element $f_{4n}$. In other words, $A$ is defined by the following homotopy pushout diagram in simplicial graded commutative $\bQ$-algebras:
     \[\xymatrix{
    \bQ[f_{4n}] \ar[r]^{f_{4n}\mapsto e_{2n}^2}\ar[d] & \bQ[e_{2n}]\ar[d] \\
    \bQ[\mathrm{Cone}f_{4n}]\simeq \bQ \ar[r] & A\simeq \bQ\otimes^L_{\bQ[f_{4n}]}\bQ[e_{2n}]
  }\]    

  Each simplicial homotopy group $\pi_s A$ of this
  two-stage complex is a graded $\bQ$-module. Since homotopy pushouts
  of simplicial commutative algebras are derived tensor products we
  find 
  \[ 
  \pi_s A\cong \Tor^{\bQ[f_{4n}]}_s(\bQ, \bQ[e_{2n}]))) \cong 
  \begin{cases} 
    \bQ[e_{2n}]/(e_{2n}^2) & \mbox{if } s=0,\\
    0 & \mbox{otherwise.}
  \end{cases}
  \] 
  By construction $A$ is a cofibrant replacement of our exterior
  algebra. On further inspection we can see this is a cofibrant
  replacement in the category of simplical graded commutative algebras
  augmented over $\bQ$. 

  To calculate the Andr\'e-Quillen cohomology of a commutative
  algebra over $B$ we first construct a cofibrant replacement $A$ as
  above and regard it as a commutative algebra over $B$
  via the given map. For example, in our case we have 
  \[
  A\xrightarrow{\simeq } \bQ[e_{2n}]/(e_{2n}^2)\xrightarrow{\epsilon}
  B.
  \] 
  If $M$ is a $B$-module we can construct the square-zero extension
  $B\oplus \Sigma^s{M}$ whose simplicial homotopy groups are
  concentrated in degrees $0$ and $s$.  By applying the Quillen left
  adjoint $B\otimes-$ to the domain algebra we obtain an isomorphism:
  \[ 
  \sCommQAlg_{\downarrow B}(A, B\oplus \Sigma^s{M}) \cong
  \mathpzc{sAug} \mahy B \mahy \mathpzc{CAlg}(B\otimes A, B\oplus \Sigma^s{M}).
  \] 

  There is an equivalence of categories between augmented commutative
  algebras over $B$ and non-unital commutative $B$-algebras. This
  equivalence sends an algebra $B\rightarrow C\xrightarrow{f} B$ to
  $I_{B}(C)=\ker{f}$.  Maps into the square-zero extension
  $B\oplus \Sigma^s M$ in augmented commutative $B$-algebras 
  correspond to $B$-module maps from $I_B(C)/I_{B}(C)^{\otimes_B
    2}$ to $\Sigma^s M$. In our case, since the augmentation $A\rightarrow B$ factors through an augmentation to $\bQ$, and every $\bQ$-module is flat, we have
  \[
  I_B(B \otimes A)/\left(I_B(B \otimes A)^{\otimes_B 2}\right) \cong B\otimes\left( I_\bQ
  (A)/I_{\bQ}(A)^{\otimes 2}\right) = B\otimes \Ind(A).
  \] 
  % 0--> I_B(B\otimes A) --> B\otimes A --> B 
  % Using flatness:
  % 0--> B\otimes Ind(A) --> B\otimes A --> B=B\otimes Q 
  % I_B(B\otimes A)^\otimes_B 2= B\otimes Ind(A)^\otimes 2
  % 0--> I_B(B\otimes A)^\otimes_B 2--> I_B(B\otimes A)-->I_B/I_B^2 -->0
  % 0--> B\otimes Ind(A)^\otimes 2--> B\otimes  I(A)--> B\otimes Ind(A)/Ind(A)^2 -->0
  Here we are taking indecomposables levelwise to obtain a simplicial
  $B$-module $B\otimes \Ind(A)$.   Putting this together we see that 
  \[
  \sCommQAlg_{\downarrow B}(A, B\oplus \Sigma^s{M}) \cong
  \mathpzc{s}B\mahy\Mod(B\otimes \Ind(A), \Sigma^s{M}) \cong 
  \sQMod(\Ind(A), \Sigma^s(M)).
  \]

  Since the indecomposables functor is a Quillen left adjoint by \Cref{prop:abelianization-adjunction-model-cats}, it
  takes the cellular pushout diagram of simplicial augmented graded
  commutative $\bQ$-algebras defining $A$ to a homotopy pushout
  diagram of simplicial graded commutative $\bQ$-modules:
   \[\xymatrix{
    \bQ\{f_{4n}\} \ar[r]^{0}\ar[d] & \bQ\{e_{2n}\}\ar[d] \\
    \Ind(\bQ[\mathrm{Cone}f_{4n}])\simeq 0 \ar[r] & \Ind(A)\simeq \bQ\{e_{2n}\}\oplus \Sigma^1\bQ\{f_{4n}\}
  }\]    

  If we let $M_k$ denote the degree $k$ portion of $M$ we see that 
  \[
  \ho(\sQMod)(\Ind(A), \Sigma^s{M}) = 
  \begin{cases} 
    M_{2n} & \mbox{if } s=0,\\ 
    M_{4n} & \mbox{if } s=1,\\ 0 & \mbox{otherwise.}
  \end{cases}
  \]

  In our case, $n=-1$ and $M=\pi_{*+t}H\bQ^{S^3}$, so $\ho
  (\sQMod)(\Ind(A),\Sigma^s{M})$ consists of a copy of $\bQ$ when
  $(s,t)\in \{ (0,-1), (0,2), (1,1), (1,4)\}$ and is zero otherwise.
  Ignoring the non-contributing term in bidegree $(0,-1)$, we obtain the $E_{2}$-term
  in \Cref{fig:hopf-e2}.

\begin{figure}
  \begin{center}
  \pgfimage{ssfig-rational-hopf}
  \end{center}
  \caption{$T$-algebra spectral sequence for $\ei$ maps $H\bQ^{S^2} \to H\bQ^{S^3}$.}
  \label{fig:hopf-e2}
\end{figure}
All other entries are trivial so the spectral sequence collapses at
$E_2.$  The $\bQ$ in $E_2^{1,1}$ detects an infinite family of homotopically distinct $\ei$ maps which,
because they land in positive filtration, induce the same $\hi$ map
$\epsilon$. It can be shown that this infinite family is
generated by the morphism of $\ei$ rings induced by the Hopf map
$S^3\rightarrow S^2$ \cite[\S~3.2]{Noe14}.
\end{example}

In the previous example, the spectral sequence vanished above the
1-line. This guaranteed the collapse of the spectral sequence and provided an algebraic
description of the homotopy groups of the space of $\ei$ maps. This is because the map
$\bQ\rightarrow \pi_*H\bQ^{S^2}$ is a local complete intersection
morphism and hence the higher Andr\'e-Quillen cohomology groups
vanish.  We will say a morphism $A\rightarrow B$ of graded
commutative rings is formally a local complete intersection, resp.~smooth,
resp.~\'etale, if the relative cotangent complex $L_{B/A}$
\cite{Qui70} has projective dimension at most one, resp.~is
projective, resp.~is contractible.

\begin{prop}\label{prop:vanishing-conditions}
  Suppose $f\colon k\rightarrow R$ and $k\rightarrow S$ are morphisms of
  rational $\ei$ rings. Suppose the $E_2$ term of the $T$-algebra spectral sequence
 computing the space of $\ei$ ring maps under $k$ between $R$ and
  $S$ can be identified with the Andr\'e-Quillen cohomology of $\pi_*R$ in $\pi_*k$-algebras as above. Then if the morphism $f$ on homotopy groups is
  \begin{enumerate}
  \item \emph{formally a local complete intersection} then the spectral sequence
    collapses at the $E_2$ page onto the 0 and 1 lines and every $\hi$
    map in $k$-modules can be realized by an $\ei$ map, although possibly non-uniquely.
  \item \emph{formally smooth} then the spectral sequence
    collapses at the $E_2$ page onto the 0-line and every $\hi$ map in $k$-modules can
    be realized,  uniquely up to homotopy, by an $\ei$ map.
  \item \emph{formally \'etale} then the spectral sequence
    collapses at the $E_2$ page and $E_2^{s,t}=0$ if $t>0$. As a
    consequence, the mapping space is homotopically discrete and every
    $\hi$ map in $k$-modules can be realized, up to a contractible space of choices, by
    an $\ei$ map.
  \end{enumerate}
\end{prop}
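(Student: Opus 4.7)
The plan is to combine the identification of the $E_2$ term from \Cref{thm:identifying-e2} with the translation of Andr\'e--Quillen cohomology into Ext groups of the cotangent complex. As justified preceding \Cref{ex:hopf-map} and in \Cref{sec:computational-lemmas}, for rational $\ei$ ring spectra under $k$ the hypotheses of \Cref{thm:identifying-e2} hold with $T_\alg$ the free graded commutative $\pi_*k$-algebra monad, so that
\[
E_2^{s,t} \cong H^{s}_{AQ,\pi_*S}\!\bigl(\pi_*R;\,\Sigma^{-t}\pi_*S\bigr)\quad\text{for } t>0,
\]
computed in graded commutative $\pi_*k$-algebras over $\pi_*S$, with $\pi_*R$ placed over $\pi_*S$ via the chosen basepoint.

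The first step is to apply Quillen's identification \cite[Prop.~3.6]{Qui70} of these groups with Ext of the relative cotangent complex,
\[
H^{s}_{AQ,\pi_*S}\!\bigl(\pi_*R;\,\Sigma^{-t}\pi_*S\bigr) \;\cong\; \Ext^{s}_{\pi_*R}\!\bigl(L_{\pi_*R/\pi_*k},\, \Sigma^{-t}\pi_*S\bigr),
\]
where $\pi_*S$ is regarded as a $\pi_*R$-module via the basepoint. Under the three successive hypotheses on $\pi_*(f)$, the cotangent complex $L_{\pi_*R/\pi_*k}$ has projective dimension at most $1$, is projective, or is contractible as an object in the derived category of $\pi_*R$-modules. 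Consequently the Ext groups---and hence the $E_2^{s,t}$ for $t>0$---vanish for $s\geq 2$, for $s\geq 1$, or for all $s\geq 0$ respectively, so the spectral sequence collapses at $E_2$ in each case for range reasons.

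The three statements about the forgetful functor then fall out of the edge-homomorphism analysis of \Cref{cor:forget-full,cor:forget-faithful}: in the lci case the obstructions $d_r\colon E_r^{0,0}\to E_r^{r,r-1}$ have vanishing target for $r\geq 2$, yielding surjectivity; in the smooth case the additional vanishing $E_2^{t,t}=0$ for $t\geq 1$ forces injectivity of the forgetful map and hence unique lifting up to homotopy; in the \'etale case the vanishing of $E_2^{s,t}$ for all $t>0$ forces the mapping space to be homotopically discrete on each component.

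The main technical point is that Quillen's Ext-with-cotangent-complex identification was originally formulated in the ungraded absolute setting, whereas here we work with $\bZ$-graded commutative $\pi_*k$-algebras and a non-trivial basepoint. This is however routine: the arguments of \cite[II.\S 5]{Qui70} apply verbatim once one observes (as in \Cref{rem:three-way-correspondence}) that graded commutative $\pi_*k$-algebras form an algebraic category in Quillen's sense, so that the standard resolution and linearization constructions defining $L_{\pi_*R/\pi_*k}$ and $\Ext$ carry over without modification.
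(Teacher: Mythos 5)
Your proof is correct and follows essentially the same route as the paper's: the paper's (one-line) proof likewise deduces everything from the vanishing of the relevant Andr\'e--Quillen cohomology groups via Quillen's cotangent-complex formulation, which is immediate given that the paper defines lci/smooth/\'etale precisely by the projective dimension of $L_{\pi_*R/\pi_*k}$. Your version simply spells out the Ext identification, the range argument for the differentials out of $E_r^{0,0}$, and the graded-category caveat in more detail.
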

\begin{proof}
  All of the results follow from the vanishing of the relevant
  Andr\'e-Quillen cohomology groups \cite[Thm.~2.4
    (\textit{ii})]{Qui70} and our identification of $E_2^{0,0}$ with
  the set of $\hi$ maps.
\end{proof}

We remark that in the previous proposition one can make the necessary $E_2$ identification when $\pi_*R$ is a free $\pi_*k$-module.
%%SAVE
\saveblockU{}

\begin{example}
  \label{ex:heisenberg-man}
We now construct examples of $\hi$ ring maps that do not lift to $\ei$
ring maps. The argument below does not make explicit use of the spectral
sequence beyond the identification of the $\hi$ maps, although it does
have consequences for its behavior.

Let $M$ be the Heisenberg 3-manifold: the quotient of the group of
uni-upper triangular $3 \times 3$ real matrices by the subgroup with
all integer entries. Since $M$ is a quotient of a contractible group
by a discrete subgroup it is a $K(\pi, 1).$ The commutator subgroup
of $\pi$ is free abelian of rank one and $\pi$ fits into the short
exact sequence of groups 
\[ 
  1 \to \bZ \to \pi \to \bZ \times \bZ \to 1.  
\]
In particular $M$ is a nilpotent space.

Applying the classifying space functor to the above exact sequence
we see that up to homotopy, $M$ can also be realized as the total
space of an $S^1$ bundle over the torus $T^2$. This $S^1$ bundle is
classified by the generator of $\mathbb{Z}\cong H^2(T^2; \bZ) \cong
[T^2, BS^1]$.

A computation with the Serre spectral sequence shows $\pi_*H\bQ^M$ is
generated by exterior classes $x$ and $y$ in degree -1, polynomial
classes $\alpha$ and $\beta$ in degree -2, which satisfy
\[
0=xy=\alpha^2 = \beta^2 = \alpha\beta = x\alpha = y\beta =x\beta - y \alpha. \] 
%%SAVE
\saveblockV{}%
As a consequence we see:
\begin{equation}\label{eq:ind}
  \hi (H\bQ^M , H\bQ^{S^2})  \iso E_2^{0,0} \iso \Ind_{-2}(\pi_*H\bQ^M) = \bQ\{
  \alpha, \beta \}.
\end{equation}
There are also Massey product identifications $\alpha \in \langle x, x, y\rangle$
and $\beta \in \langle y, y, x\rangle$ with indeterminacy \[0=x H^1(M;\bQ)+H^1(M;\bQ)y.\]

Any map from $H\bQ^M$ to $H\bQ^{S^2}$ sends $x$ and $y$ to zero
for degree reasons.  Now $\alpha$ and $\beta$ are Massey products in
$x$ and $y$ and Massey products in $H\bQ^* M$ correspond to Toda
brackets in $H\bQ^M$.  Since $\ei$ maps preserve Toda brackets, they
must also send $\alpha$ and $\beta$ to zero. So $\alpha$ and
$\beta$ must support differentials and correspond to $\hi$ maps which do
not lift to $\ei$ maps. 

The behavior of this spectral sequence is explained further in
\cite{Noe14}, where it is shown that this spectral sequence is
converging to 
\[
\pi_* \Top^d(S^2, K(\pi,1)_\bQ) \cong 
\pi_* (\Omega^2 K(\pi,1)\times K(\pi,1)_\bQ)
\cong \pi_* K(\pi,1)_\bQ.
\]
In particular all non-trivial elements in the $t-s=0$ column, including $\alpha$ and $\beta$, must support a differential.

\end{example}

\subsection{\texorpdfstring{$\Coker J$}{Coker J} and maps of \texorpdfstring{$E_{\infty}$}{E\_∞} ring spectra}\label{sec:coker-j}
The following example is a joint result of the second named author and
Nick Kuhn.

For this example we will need to recall the definitions of some
classical infinite loop spaces and their associated connective spectra
\cite[p.271]{HoS78}. Let $SL_1 S^0=GL_1 S^0\langle 0\rangle$
denote the 1-component of $QS^0$. Let us fix an odd prime $p$ and let $q$ be an
integer generating $(\bZ/p^2)^{\times}$. 
\saveblockW{}%
Define $J$ to be the fiber of the map \[ BU^{\otimes}
\xrightarrow{\psi^q/\mathrm{Id}} BU^{\otimes} \] where $BU^{\otimes}$ is the
1-component of $p$-local $K$-theory and $\psi^q$ is the $q$th Adams
operation. The $d$-invariant defines a map $S^0\rightarrow KU$ which
restricts to a map $SL_1 S^0\xrightarrow{D} BU^{\otimes}$ which in turn
lifts to a map $SL_1 S^0\xrightarrow{D} J$. 
\saveblockX{}%
% KEEP THIS COMMENT LINE FOR PROPER SPACING
Let $\Coker J$ be the fiber of this last map.
\saveblockY{}

At the prime $2$ there are several possible definitions of $J$ and
consequently several possible definitions of $\Coker J$. A perfectly
reasonable approach is to set $J$ to be the fiber of the map
\begin{equation} \label{eqn:j-def}
BO^{\otimes} \xrightarrow{\psi^3/\mathrm{Id}} BO^{\otimes}.
\end{equation} 
However this introduces some homotopy groups in low degrees that are not
in the image of $D$. To rectify this there are variations where one
replaces one or both copies of $BO$ by either its 1- or 2-connected
cover. Rather than go through all the variations we note that all
possible choices will yield the same definition of $J$ after taking
1-connected covers. So we define $J$ to be the 1-connected cover of the
fiber of the map in \eqref{eqn:j-def}.  We then set $\Coker J$ to be the
fiber of the map $SL_1 S^0\langle 1\rangle \xrightarrow{D} J$ defined as above.

\saveblockZ{}

It is a non-trivial classical result that all spaces and maps in sight are infinite
loop maps (see \cite{HoS78} for a survey of these results), but this can be easily deduced from a more modern construction: For each prime $p$ the above $J$-space is the simply connected cover of the unit component of the 0th space of the $K(1)$-local sphere. The latter spectrum fits into the analogous fiber sequence with $K$-theory spectra and the $D$-invariants are induced by the unit map from the sphere spectrum. It is clear from this construction that all maps are infinite loop maps. We will follow tradition and denote the connective spectra associated to these infinite loop spaces with lower case letters.

\begin{example}
  \label{ex:e-infty-maps-coker-j}
  Let $\Sigma^{\infty}_+ \Coker J$ be the unreduced suspension spectrum of the
  infinite loop space $\Coker J$ and let $R$ be any $\ei$ ring spectrum. 
  We will show that the $T$-algebra spectral sequence computing the homotopy of
  $\ei^d(\Sigma^{\infty}_+ \Coker J,L_{K(2)} R)$ collapses at the $E_2$ page onto the 0-line. So in
  this case the edge homomorphism 
  \[
  \pi_0 \ei^d(\Sigma^{\infty}_+ \Coker J,L_{K(2)}R)\rightarrow
  H_{\infty}(\Sigma^{\infty}_+ \Coker J,L_{K(2)}R)
  \] 
  is an isomorphism. Moreover there is a homotopy equivalence of spaces
  \[
  \ei^d(\Sigma^{\infty}_+ \Coker J,L_{K(2)}R)\simeq
  \Omega^{\infty}L_{K(2)}R.
  \]
  This will follow from the canonical equivalence 
   \[
  \ei^d(\Sigma^{\infty}_+ \Coker J,L_{K(2)}R)\simeq
  \ei^d(L_{K(2)}(\Sigma^{\infty}_+ \Coker J),L_{K(2)}R),
  \]
  the following result, and \Cref{ex:free}.
\end{example}

\begin{thm}{[Kuhn-Noel]}
  \label{thm:suspension-spectrum-coker-j-is-K2-locally-free}
  There is a $K(2)$-equivalence of $\ei$ ring spectra \[T
  S^0\simeq \Sigma^{\infty}_+ \Coker J\] where $T$ is the above monad whose
  algebras are $\ei$ ring spectra.
\end{thm}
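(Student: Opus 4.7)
The strategy is to construct a canonical map $\phi \cn T S^0 \to \Sigma^\infty_+ \Coker J$ of $\ei$ ring spectra and show that it becomes an equivalence after $K(2)$-localization. Since $T S^0$ is the free $\ei$ ring spectrum on $S^0$, specifying $\phi$ is equivalent to specifying a map $S^0 \to \Sigma^\infty_+ \Coker J$ of spectra; I take the $\ei$-unit picking out the basepoint of $\Coker J$, where $\Coker J$ carries its multiplicative $\ei$-space structure inherited from $SL_1 S^0$. Under the identification $T S^0 \simeq \bigvee_{n \ge 0} \Sigma^\infty_+ B\Sigma_n$, the restriction of $\phi$ to the $n$-th wedge summand is the iterated multiplication of $n$ copies of the unit, parametrized by $E\Sigma_n$. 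Hence showing $\phi$ is a $K(2)$-equivalence amounts to realizing the $K(2)_*$-calculation $K(2)_* \Coker J \cong \bigoplus_n K(2)_* B\Sigma_n$ announced in the introduction via $\phi$.

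I would factor $\phi$ through $\Sigma^\infty_+ SL_1 S^0\langle 1\rangle$. The fiber sequence $\Coker J \to SL_1 S^0\langle 1\rangle \to J$ of infinite loop spaces admits a splitting $SL_1 S^0\langle 1\rangle \simeq \Coker J \times J$ (classical at odd primes via a $p$-local section of $D$; at the prime $2$ one first passes to appropriate connective covers), whence $\Sigma^\infty_+ SL_1 S^0\langle 1\rangle \simeq \Sigma^\infty_+ \Coker J \wedge \Sigma^\infty_+ J$. The crux of the proof is that the reduced suspension spectrum $\widetilde{\Sigma^\infty} J$ is $K(2)$-acyclic; granting this, $\Sigma^\infty_+ J \simeq S^0$ $K(2)$-locally, so projecting onto the first smash factor yields a $K(2)$-equivalence $\Sigma^\infty_+ SL_1 S^0\langle 1\rangle \to \Sigma^\infty_+ \Coker J$. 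On the other side, the map $T S^0 \to \Sigma^\infty_+ SL_1 S^0\langle 1\rangle$ should be a $K(2)$-equivalence by a group-completion argument: the Barratt-Priddy-Quillen map $\coprod_n B\Sigma_n \to Q S^0$ exhibits $Q S^0$ as the group completion of $\coprod_n B\Sigma_n$, and after projecting to the identity component and $K(2)$-localizing, this translates to an isomorphism in $K(2)$-homology because the Pontryagin translation generator becomes invertible in $K(2)$-homology by standard chromatic nilpotence inputs.

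The main obstacle is establishing the $K(2)$-acyclicity of $\widetilde{\Sigma^\infty} J$. While the spectrum $j$ itself is $K(2)$-acyclic---it is $K(1)$-local, being constructed from $bu$ or $bo$ via Adams operations---this does not automatically imply that its infinite loop space has $K(2)$-acyclic suspension spectrum; passing from the spectrum-level statement to the space-level one is a deep chromatic vanishing result. The natural tool is the Tate vanishing theorem of Hopkins-Kuhn-Ravenel, or equivalently a result of Kuhn that $L_{K(n)} \Sigma^\infty \Omega^\infty E$ vanishes for suitable connective spectra $E$ whose higher chromatic localizations vanish. Applying this to $E = j$ yields the required vanishing, and together with the splitting and group-completion inputs above this completes the proof of the $K(2)$-equivalence $T S^0 \simeq \Sigma^\infty_+ \Coker J$ of $\ei$ ring spectra.
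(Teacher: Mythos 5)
Your overall architecture---produce an explicit $\ei$ map $TS^0 \to \Sigma^\infty_+\Coker J$ and check it in $K(2)$-homology---is reasonable, but both halves of your factorization through $\Sigma^\infty_+ SL_1S^0\langle 1\rangle$ fail, and for the same chromatic reason. First, $\widetilde{\Sigma^\infty}J$ is \emph{not} $K(2)$-acyclic. The vanishing/identification theorem you invoke (Kuhn's comparison of $L_{K(n)}\Sigma^\infty_+\Omega^\infty E$ with the free $\ei$-algebra) places its hypothesis on the \emph{lower} chromatic layers: it requires $K(n-1)_*\Omega^\infty E$ (equivalently, suitable lower telescopic localizations of $E$) to vanish, not the higher ones. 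Since $j$ is essentially $K(1)$-local, and since $D\colon SL_1S^0 \to J$ is a $K(1)$-equivalence, the space $J=\Omega^\infty j$ has nontrivial reduced $K(1)$-homology; by the nesting of $K(n)$-acyclicity for spaces (a nontrivial $\widetilde{K(n)}_*$ forces a nontrivial $\widetilde{K(n+1)}_*$, as one already sees for Eilenberg--MacLane spaces), $\widetilde{K(2)}_*J \neq 0$. So collapsing the $J$ factor is not a $K(2)$-equivalence. Second, the group-completion step is also false: group completion identifies $K(2)_*Q_0S^0$ with the localization of $\bigoplus_n K(2)_*B\Sigma_n$ at the translation class, i.e.\ with $\colim_n K(2)_*B\Sigma_n$, and since the $K(2)_*$-dimension of $K(2)_*B\Sigma_n$ grows with $n$ the translation is not already invertible; the canonical map $\bigoplus_n K(2)_*B\Sigma_n \to \colim_n K(2)_*B\Sigma_n$ has an enormous kernel. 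Equivalently, Kuhn's theorem applied at $X=S^0$ fails precisely because $K(1)_*QS^0 \neq 0$. The two errors are complementary (the excess $K(2)$-homology of $Q_0S^0$ over the free algebra is exactly the contribution of the $J$ factor), so your composite may well be the correct map, but neither intermediate comparison proves anything.

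The paper's proof avoids both traps by never comparing $TS^0$ with $\Sigma^\infty_+QS^0$. It applies Kuhn's natural $\ei$ map $TX \to L_{K(2)}\Sigma^\infty_+\Omega^\infty X$ at $X=\coker j$, where the hypotheses \emph{are} satisfied: $\Coker J$ is sufficiently connected, and it is $K(1)$-acyclic because $D$ is a $K(1)$-equivalence (so the $K(1)$-Serre spectral sequence of $\Coker J \to SL_1S^0 \to J$ collapses). This gives $T\coker j \simeq \Sigma^\infty_+\Coker J$ $K(2)$-locally. The source is then transported along spectrum-level $K(2)$-equivalences $\coker j \to sl_1S^0$ (as $j$ is $K(2)$-acyclic), $L_{K(2)}sl_1S^0 \simeq L_{K(2)}S^0\langle 0\rangle$ (via the Bousfield--Kuhn functor applied to the non-infinite-loop equivalence $SL_1S^0\simeq Q_0S^0$), and $L_{K(2)}S^0\langle 0\rangle \simeq L_{K(2)}S^0$, using that $T$ preserves $K(2)$-equivalences. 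The whole point of routing through $\coker j$ is that $S^0$ and $\coker j$ agree $K(2)$-locally as spectra while their infinite loop spaces do not agree in $K(2)$-homology; your argument, working directly with $QS^0$ and $J$, runs into exactly the discrepancy this detour is designed to circumvent. (A further, smaller, issue: the infinite-loop splitting $SL_1S^0\langle 1\rangle \simeq \Coker J\times J$ is classical at odd primes but delicate at $p=2$; the paper needs only the fibration sequence, never a splitting.)
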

\begin{proof}
  A consequence of \cite[Thm.~2.21]{Kuh06a} is that for any
  spectrum $X$  there is a natural map of $\ei$ ring spectra
  \begin{equation} \label{eqn:kuhns-map}
    T X \rightarrow L_{K(2)} \Sigma^{\infty}_{+} \Omega^{\infty} X 
  \end{equation} 
  which is an equivalence if $X$ is 2-connected, $\pi_3 X$ is torsion, and $K(1)_*\Omega^{\infty} X$ is trivial.
  
  First we consider the $p$-local case for an odd prime $p$. In this
  case the $D$-invariant \[
  SL_1 S^{0}\xrightarrow{D} J \]
  is at least $2p-3$ connected, hence $\Coker J$ is at least
  $3$-connected. %
  \saveblockZA{}%
  As shown in \cite[Thm.~2.7]{HoS78}, $K(1)_*\Coker J$ is trivial for every prime $p$. Hence
  \eqref{eqn:kuhns-map} is an equivalence for $X=\coker j$.

  Delooping the defining fibration for $\Coker J$ we obtain a fiber sequence
  \[
   \coker j \rightarrow sl_{1}S^0 \xrightarrow{d} j.
  \]
  Since $j$ is $K(2)$-acyclic we have a $K(2)$-equivalence $\coker
  j\rightarrow sl_{1}S^0.$  The additive $H$-space structure on $QS^0$ induces a homotopy equivalence $SL_1
  S^0\simeq QS^0_{0}$ between the 1 and 0 components of $QS^0$. Although
  this is not a map of infinite loop spaces, applying the Bousfield-Kuhn
  functor $\phi_2\colon \ho{\pTop}\rightarrow \ho(L_{K(2)}\Spectra)$ (which factorizes the $K(2)$-localization functor as $L_{K(2)}\simeq \phi_2\circ \Omega^{\infty}$ \cite[Thm.~1.1]{Kuh84}) to this equivalence does yield an equivalence
  $L_{K(2)} sl_1 S^0\simeq L_{K(2)} S^0\langle 0\rangle$. 

  Since Eilenberg-MacLane spectra are
  $K(n)$-acyclic \cite{RaW80}, the defining cofiber sequence for
  the 0-connected cover
  \[
    S^0\langle 0 \rangle \rightarrow S^0 \rightarrow H\bZ
  \]
  shows that $L_{K(2)}S^0 \langle 0\rangle \simeq L_{K(2)} S^0.$  
  Finally we use naturality of the spectral sequence 
  \[ H_*(\Sigma_n; K(2)_{*}(X)^{\otimes_{K(2)_*} n}) \Longrightarrow K(2)_{*}
   ((E\Sigma_n)_{+} \wedge_{\Sigma_{n}} X^{n})\]
  to see that the functor $T$ preserves
  $K(2)$-equivalences. 
  
  Assembling these results, we obtain the desired zig-zag of equivalences of $\ei$
  ring spectra in the $K(2)$-local category
  \[ T S^0 \leftarrow T (S^0\langle 0\rangle) \leftarrow T sl_1 S^0
   \leftarrow T \coker j \rightarrow \Sigma^{\infty}_{+} \Coker J.
  \]

  At the prime 2 our defining fibration sequence is 
  \[
    \Coker J \rightarrow SL_1 S^0 \langle 1 \rangle\xrightarrow{D} J.
  \]
  Here $D$ is 3-connected so $\Coker J$ is sufficiently
  connected. Again \cite[Thm.~2.7]{HoS78} shows that $K(1)_*\Coker J$
  is trivial. %
  \saveblockZB{}%
  The rest of the argument proceeds as before to obtain
  a zig-zag of $K(2)$-local equivalences of $\ei$ ring spectra
   \[ T S^0 \leftarrow T (S^0\langle 1\rangle) \leftarrow T (sl_1
   S^0\langle 1 \rangle)
   \leftarrow T \coker j \rightarrow \Sigma^{\infty}_{+} \Coker J.
  \]
\end{proof}

\begin{remark}
  Combining this result with \cite{Kas98, Str98} one can determine the $K(2)$ and $E_2$-cohomology of $\Coker J$.
\end{remark}

\begin{remark}
  The $K(1)$-local analogue of \Cref{ex:e-infty-maps-coker-j} is
  considered in \cite[\S~3.3]{Noe14}.
\end{remark}

\subsection{Computational lemmas}\label{sec:computational-lemmas}

One of the key steps in obtaining a calculational description of the
$E_2$ term is verifying conditions \eqref{item:universal-coefficient} and \eqref{item:algebraic-description} from \refthmB. In our examples, condition \eqref{item:universal-coefficient} follows from the following result in the case $R=Hk$ for some field $k$:
\begin{lemma}\label{lem:HR-mod-vs-R-mod-pi-closed}
  If $M$ and $N$ are $R$-modules  such
  that $\pi_*M$ is projective as a $\pi_* R$-module then 
  \[
  \pi_t \left( \RMod^d(M,N) \right) \cong \Mod_{\pi_*R}(\pi_* M, \pi_{*+t}N).
  \]
\end{lemma}
\begin{proof}
  The Ext spectral sequence of \cite[Thm.~IV.4.1]{EKMM97} collapses.
\end{proof} 

To verify condition \eqref{item:algebraic-description} from \refthmB we need to find a monad $T_{\alg}$ such that there is a natural isomorphism \[\pi_* T \cong T_{\alg}\pi_*. \] When $T$ is the monad associated to $A_{\infty}/E_{\infty}$-algebras in $Hk$-modules for suitable $k$ we will identify $T_{\alg}$. The associated categories of $T_{\alg}$-algebras will be equivalent to graded associative/commutative $k$-algebras respectively.

In both of these examples our monad takes the form
\[TM=\bigvee_{n\geq 0} K_{n}\otimes_{\Sigma_n} M^{\wedge_{Hk}n}.\] In the
$E_\infty$ case $K_n$ is a contractible free $\Sigma_n$-space while in the
$A_{\infty}$ case it is a free $\Sigma_n$-space which is weakly equivalent to $\Sigma_n$. The identification of
$\pi_* TM$ as a functor of $\pi_* M$ in these cases follows from the following sequence of elementary spectral sequence arguments which we have stated in terms of a fixed commutative $S$-algebra $R$. 

\saveblockZC{}

\begin{lemma}\label{lem:HR-mod-vs-R-mod-pi-monoidal}
  If $M$ and $N$ are $R$-modules such
  that either $\pi_*M$ or $\pi_* N$ is flat as a $\pi_* R$ module then 
  \[
  \pi_* (M\wedge_{R}N)\cong \pi_*M \otimes_{\pi_* R} \pi_*N
  \]
\end{lemma}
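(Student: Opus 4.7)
The plan is to invoke the Künneth spectral sequence for modules over a commutative $S$-algebra and observe that it degenerates under the stated flatness hypothesis.

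First I would construct (or cite) the Künneth spectral sequence for the smash product over $R$. Following \cite[Thm.~IV.4.1]{EKMM97}, for $R$-modules $M$ and $N$ there is a strongly convergent spectral sequence
\[
E_2^{p,q} = \Tor_{p,q}^{\pi_* R}(\pi_* M, \pi_* N) \Longrightarrow \pi_{p+q}(M \wedge_R N),
\]
where the first index is homological degree and the second is internal (graded module) degree. This is built by taking a simplicial resolution of (say) $M$ by wedges of suspensions of free $R$-modules and filtering the geometric realization of the resulting simplicial $R$-module smashed with $N$; the $E_2$-term is identified via the usual computation of $\Tor$ from a free resolution.

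Next I would use the flatness hypothesis to collapse the spectral sequence. Without loss of generality assume $\pi_* N$ is flat over $\pi_* R$ (the argument is symmetric). Then $\Tor_{p,*}^{\pi_* R}(\pi_* M, \pi_* N) = 0$ for all $p > 0$, so the spectral sequence is concentrated on the line $p = 0$ and must degenerate at $E_2$. The remaining row is
\[
E_2^{0,q} = \Tor_{0,q}^{\pi_* R}(\pi_* M, \pi_* N) = (\pi_* M \otimes_{\pi_* R} \pi_* N)_q,
\]
and since the filtration on $\pi_*(M \wedge_R N)$ has only one nonzero filtration quotient the edge homomorphism
\[
\pi_* M \otimes_{\pi_* R} \pi_* N \longrightarrow \pi_*(M \wedge_R N)
\]
is the desired isomorphism.

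The only real subtlety is verifying that the spectral sequence exists and converges strongly in the generality required; but this is precisely the content of the Künneth theorem in \cite[\S IV.4]{EKMM97} (and analogous references exist for the other standard symmetric monoidal models of spectra, cf.~\cite{HSS00,MaM02}). No further argument is needed beyond identifying the edge homomorphism, which is manifestly the natural pairing induced by the multiplication map on homotopy groups.
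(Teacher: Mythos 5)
Your proposal is correct and follows exactly the paper's argument: the paper's proof is simply to note that the Tor (K\"unneth) spectral sequence of \cite[IV.4.1]{EKMM97} collapses under the flatness hypothesis. You have merely spelled out the degeneration and the identification of the edge homomorphism in more detail.
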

\begin{proof}
  The Tor spectral sequence of \cite[Thm.~IV.4.1]{EKMM97} collapses.
\end{proof} 

\begin{prop}\label{prop:a-infty-identification}
  Suppose that $M$ is an $R$-module spectrum, $\pi_*R$ is a graded
  field, and $T$ is the above monad on $R$-module spectra whose category of
  algebras is the category of $A_{\infty}$ algebras in $R$-module
  spectra. Then there is a natural isomorphism 
  \[ \pi_* TM \cong T_{\alg}\pi_* M :=
  \bigoplus_{n\geq 0} (\pi_*M)^{\otimes_{\pi_*R}  n}.\] 
  Here $T_{\alg}$ is the monad on $\pi_*R$-modules whose algebras are the
  associative algebras in that category.
\end{prop}

For the $E_{\infty}$ case we will need the following:
\begin{lemma}\label{lem:HR-mod-vs-R-mod-extended-powers}
   If $E\Sigma_n$ is a contractible $\Sigma_n$-CW-complex and $M$ is an
   $R$-module such that $n!$ is a unit in $\pi_0 R$ then 
   \[
    \pi_* (E\Sigma_n\otimes _{\Sigma_n}M^{\wedge_{R}n}) \cong
    \pi_* (M^{\wedge_{R}n})/\Sigma_n.
  \]
\end{lemma}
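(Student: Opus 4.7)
The plan is to use the homotopy orbits (Borel) spectral sequence associated with the skeletal filtration of $E\Sigma_n$. Filtering $E\Sigma_n$ by its $\Sigma_n$-equivariant skeleta and smashing with $M^{\wedge_R n}$ yields a filtration on $E\Sigma_n \otimes_{\Sigma_n} M^{\wedge_R n}$ whose associated spectral sequence has the familiar form
\[
E_2^{p,q} = H_p\bigl(\Sigma_n;\, \pi_q(M^{\wedge_R n})\bigr) \;\Longrightarrow\; \pi_{p+q}\bigl(E\Sigma_n \otimes_{\Sigma_n} M^{\wedge_R n}\bigr),
\]
where the action of $\Sigma_n$ on $\pi_*(M^{\wedge_R n})$ is the one induced by permutation of smash factors. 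This is essentially the standard Atiyah--Hirzebruch/Bousfield--Kan style spectral sequence for a homotopy colimit over $B\Sigma_n$; its existence and convergence follow from the fact that $E\Sigma_n$ admits a $\Sigma_n$-CW structure with cells of the form $\Sigma_n/H \times D^p$, so each filtration quotient is a wedge of suspensions of $M^{\wedge_R n}$ induced up from stabilizers.

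Next I would exploit the hypothesis that $n!$ is a unit in $\pi_0 R$. Since $\pi_*(M^{\wedge_R n})$ is a module over $\pi_* R$, and $\pi_* R$ is a $\bZ[1/n!]$-algebra, every homotopy group $\pi_q(M^{\wedge_R n})$ is a $\bZ[1/n!]$-module. In particular, multiplication by $|\Sigma_n| = n!$ acts invertibly, so $\pi_q(M^{\wedge_R n})$ is a module over $\bZ[1/n!][\Sigma_n]$, a ring in which Maschke's theorem applies: every short exact sequence of modules splits, and hence
\[
H_p(\Sigma_n;\, \pi_q(M^{\wedge_R n})) = 0 \quad \text{for all } p > 0.
\]
The spectral sequence therefore collapses onto the $p=0$ line at $E_2$, and there are no extension problems.

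The remaining identification is routine: the $p = 0$ column is
\[
H_0\bigl(\Sigma_n;\, \pi_*(M^{\wedge_R n})\bigr) \;=\; \pi_*(M^{\wedge_R n})_{\Sigma_n} \;=\; \pi_*(M^{\wedge_R n})/\Sigma_n,
\]
which matches the stated conclusion. I do not expect any substantive obstacle here; the only point that deserves care is verifying that the filtration on $E\Sigma_n \otimes_{\Sigma_n} M^{\wedge_R n}$ really does yield a spectral sequence whose $E_2$-page is computed by group homology with coefficients in $\pi_*(M^{\wedge_R n})$, but this is the standard computation of each filtration quotient as $(E\Sigma_n^{(p)}/E\Sigma_n^{(p-1)}) \otimes_{\Sigma_n} M^{\wedge_R n}$, whose homotopy is the $p$-chains in the bar complex computing $H_p(\Sigma_n; \pi_*(M^{\wedge_R n}))$.
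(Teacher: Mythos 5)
Your proposal is correct and takes essentially the same route as the paper: both run the homotopy orbit spectral sequence $H_s(\Sigma_n;\pi_t(M^{\wedge_R n}))\Rightarrow \pi_{s+t}\bigl((E\Sigma_n)_+\wedge_{\Sigma_n}M^{\wedge_R n}\bigr)$ and collapse it onto the $s=0$ line using the invertibility of $n!$. One small correction: $\bZ[1/n!][\Sigma_n]$ is not semisimple, so the claim that \emph{every} short exact sequence of modules splits is false as stated; the vanishing of $H_p(\Sigma_n;-)$ for $p>0$ on modules where $n!$ acts invertibly should instead be justified by the transfer argument (the composite of restriction to the trivial subgroup and corestriction is multiplication by $n!$ and factors through zero), which is exactly what the paper invokes.
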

\begin{proof}
  The homotopy orbit spectral sequence 
  \[ 
  H_s(\Sigma_n; \pi_t(M^{\wedge_{R}n})) \Longrightarrow 
  \pi_{s+t}((E\Sigma_n)_{+} \wedge_{\Sigma_n} M^{\wedge_R n})
  \] 
  collapses by a standard transfer argument since $|\Sigma_n|=n!$
  acts invertibly on the coefficients.
\end{proof} 

\begin{prop} \label{prop:algebraic-description-for-e-infty-monad}
  Suppose that $M$ is an $R$-module spectrum, $\pi_*R$ is a graded field with $\pi_0
  R$ a field of characteristic 0, and $T$
  is the above monad on $R$-module spectra whose category of algebras is
  the category of $E_{\infty}$ algebras in $R$-module spectra. Then there
  is a natural isomorphism 
  \[ \pi_* TM \cong T_{\alg}\pi_* M :=
  \bigoplus_{n\geq 0} (\pi_*M)^{\otimes_{\pi_*R}  n}/{\Sigma_n}.\] 
  Here $T_{\alg}$ is the monad on $\pi_*R$-modules whose algebras are the
  commutative algebras in that category.
\end{prop}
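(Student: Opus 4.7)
The plan is to chain the two lemmas just established, one bookkeeping the extended powers and the other the smash products, and then collect the summands. Because the statement is formulated as a natural isomorphism of functors, the naturality will follow automatically from the naturality of each of the ingredients.

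First I would write $TM$ explicitly as the wedge $\bigvee_{n \geq 0} E\Sigma_n \otimes_{\Sigma_n} M^{\wedge_R n}$ (using that in the $E_\infty$ case each $K_n$ is $\Sigma_n$-equivariantly weakly equivalent to a free contractible $\Sigma_n$-CW-complex $E\Sigma_n$, and that the functor $(-)\otimes_{\Sigma_n} M^{\wedge_R n}$ preserves such equivalences). Since homotopy groups commute with arbitrary wedges of $R$-modules, this already gives
\[
\pi_* TM \cong \bigoplus_{n \geq 0} \pi_*\bigl(E\Sigma_n \otimes_{\Sigma_n} M^{\wedge_R n}\bigr).
\]

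Next I would apply \Cref{lem:HR-mod-vs-R-mod-extended-powers} to each summand. The hypothesis that $\pi_0 R$ is a field of characteristic zero guarantees that $n!$ is a unit in $\pi_0 R$ for every $n$, so the lemma gives $\pi_*(E\Sigma_n \otimes_{\Sigma_n} M^{\wedge_R n}) \cong \pi_*(M^{\wedge_R n})/\Sigma_n$. Then I would apply \Cref{lem:HR-mod-vs-R-mod-pi-monoidal} inductively: because $\pi_* R$ is a graded field, every $\pi_* R$-module (in particular $\pi_* M$ and every iterated tensor power of it) is flat, so iterating the monoidal formula yields the natural isomorphism
\[
\pi_*(M^{\wedge_R n}) \cong (\pi_* M)^{\otimes_{\pi_* R} n}
\]
as graded $\pi_* R$-modules with compatible $\Sigma_n$-action. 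Passing to coinvariants and summing over $n$ then identifies $\pi_* TM$ with $T_\alg \pi_* M$.

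The only nontrivial point is verifying that the identification is compatible with the $\Sigma_n$-action in a way that makes taking coinvariants commute with the isomorphism of the previous step; this is routine because the isomorphism of \Cref{lem:HR-mod-vs-R-mod-pi-monoidal} is induced by the natural pairing $\pi_* M_1 \otimes_{\pi_* R} \pi_* M_2 \to \pi_*(M_1 \wedge_R M_2)$, which is plainly $\Sigma_n$-equivariant when all factors are copies of $M$. The remaining verification, that this isomorphism intertwines the monad structure maps of $T$ and $T_\alg$, follows from the fact that both structure maps are induced from the symmetric monoidal structure (on $R$-modules and on $\pi_* R$-modules respectively), and the pairings above are monoidal natural transformations. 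The main potential obstacle is simply keeping careful track of the $\Sigma_n$-equivariance through the Tor and homotopy-orbit spectral sequence collapses; once that is in hand, the assembly of the pieces is formal.
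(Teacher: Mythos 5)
Your proposal is correct and matches the paper's argument: the paper also derives this proposition as an immediate consequence of \Cref{lem:HR-mod-vs-R-mod-extended-powers} (with $n!$ invertible because $\pi_0 R$ has characteristic zero) combined with \Cref{lem:HR-mod-vs-R-mod-pi-monoidal} applied to the smash powers, exactly as you chain them. You simply spell out the wedge decomposition, flatness over the graded field, and equivariance bookkeeping that the paper leaves implicit.
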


%%% Local Variables: ***
%%% mode: latex ***
%%% TeX-master: "obstruction-theory" ***
%%% End: ***

% amsalphaabbrv is a tweaked version of amsalpha which
% abbreviates first names as initials.  This makes for a consistent
% bibliography when not all author names are spelled out in the 
% .bib source file.

\bibliographystyle{model2-names}

%% for working copy of article
%\IfFileExists{../../biblio/biblio.bib}{\bibliography{notes/biblio/biblio}}{\bibliography{biblioJN/biblio}}

%% for arxiv and journal submission
\bibliography{obstruction-theory.bbl}

\end{document}